\newtheorem{theorem}{Theorem}
\newtheorem{corollary}[theorem]{Corollary}
\newtheorem{example}[theorem]{Example}
\newtheorem{lemma}[theorem]{Lemma}
\newtheorem{proposition}[theorem]{Proposition}
\newtheorem{remark}[theorem]{Remark}
\newenvironment{proof}[1][Proof]{\noindent\textbf{#1.} }{\ \rule{0.5em}{0.5em}}
\newdimen\dummy
\begin{document}

\title{A Gibbs Conditional theorem under extreme deviation}
\author{Maeva Biret$^{(1)}$, Michel Broniatowski$^{(2,\ast )}$, Zangsheng Cao$^{(2)}$ \\
%EndAName
$^{(1)}$ SNECMA***, $^{(2)}$ LSTA, Universit\'{e} Pierre et Marie Curie, Paris, France, $^{(\ast )}$ Corresponding author}
\maketitle

\begin{abstract}
We  explore some properties of the conditional distribution of an  i.i.d. sample under large exceedances of its sum. Thresholds for the asymptotic independance of the summands are observed, in contrast with the classical case when the conditioning event is in the range of a large deviation. This paper is an extension to \cite{BoniaCao}. Tools include a new Edgeworth expansion adapted to specific triangular arrays where the rows are generated by tilted distribution with diverging parameters, together with some Abelian type results.
\end{abstract}

\section{Introduction}

\label{secintroduction}

Let $X_{1}^{n}:=\left( X_{1},..,X_{n}\right)$ be $n$ independent unbounded real valued random variables and $S_{1}^{n}:=X_{1}+..+X_{n}$ denote their sum. The purpose of this paper is to explore the limit distribution of the generic variable $X_{1}$ conditioned on extreme deviations (ED) pertaining to $S_{1}^{n}.$ By extreme deviation we mean that $S_{1}^{n}/n$ is supposed to take values which are going to infinity as $n$ increases. Obviously such events are of infinitesimal probability. Our interest in this question stems from a first result which assesses that under appropriate conditions, when the sequence $a_{n}$ is such that 
\[
\lim_{n\rightarrow \infty }a_{n}=\infty
\]
then there exists a sequence $\varepsilon_{n}$ which tends to $0$ as $n$ tends to infinity such that 
\begin{equation}
\lim_{n\rightarrow\infty}P\left(\left.\cap_{i=1}^{n}\left(X_{i}\in
\left(a_{n}-\varepsilon_{n},a_{n}+\varepsilon_{n}\right)\right)
\right\vert S_{1}^{n}/n>a_{n}\right)=1,  \label{democracy}
\end{equation}
which is to say that when the empirical mean takes exceedingly large values, then all the summands share the same behaviour. This result obviously requires a number of hypotheses, which we simply quote as "light tails" type. We refer to \cite{BoniaCao} for this result and the connection with earlier related works; that such most unusual cases may be considered is argumented in this latest paper, in relation with the Erd\"{o}s-R\'{e}niy law of large numbers and the formation of high level aggregates in random sequences.

The above result is clearly to be put in relation with the so-called Gibbs conditional Principle which we recall briefly in its simplest form.

Consider the case when the sequence $a_{n}$ $=a$ is constant with value larger than the expectation of $X_{1}.$ Hence we consider the behaviour of the summands when $\left(S_{1}^{n}/n>a\right)$, under a large deviation (LD) condition about the empirical mean. The asymptotic conditional distribution of $X_{1}$ given $\left(S_{1}^{n}/n>a\right)$ is the well known tilted distribution of $P_{X}$ with parameter $t$ associated to $a$. Let us introduce some notation to shed some light on this The hypotheses to be stated now together with notation are kept throughout the entire paper. Without loss of generality it is assumed that the generic r.v. $X_{1}$ takes only non negative values.

It will be assumed that $P_{X}$ , which is the distribution of $X_{1}$, has a density $p$ with respect to the Lebesgue measure on $\mathbb{R}$. The fact that $X_{1}$ has a light tail is captured in the hypothesis that $X_{1}$ has a moment generating function 
\[
\Phi(t):=E\lbrack\exp tX_{1}\rbrack,
\]
which is finite in a non void neighborhood $\mathcal{N}$ of $0$. This fact is usually refered to as a Cramer type condition.

Defined on $\mathcal{N}$ are the following functions. The functions 
\begin{equation}
t\rightarrow m(t):=\frac{d}{dt}\log\Phi(t)  \label{m}
\end{equation}
\begin{equation}
t\rightarrow s^{2}(t):=\frac{d}{dt}m(t)  \label{s^2}
\end{equation}

\begin{equation}
t\rightarrow\mu_{j}(t):=\frac{d^{j}}{dt^{j}}\log\Phi(t)\text{, }
j\geq 3  \label{mu_j}
\end{equation}
are the expectation, the variance, and the centered moments of order $j$ of the r.v. $\mathcal{X}_{t}$ with density 
\[
\pi_{t}(x):=\frac{\exp tx}{\Phi(t)}p(x)
\]
which is defined on $\mathbb{R}$ and which is the tilted density with
parameter $t$. When $\Phi$ is steep, meaning that 
\[
\lim_{t\rightarrow t^{+}}m(t)=\infty
\]
where $t^{+}:=ess\sup\mathcal{N}$ then $m$ parametrizes the convex hull of the support of $P_{X}$. We refer to Barndorff-Nielsen \cite{barndorff} for those properties. As a consequence of this fact, for all $a$ in the support of $P_{X}$, it will be convenient to define 
\[
\pi^{a}=\pi_{t}
\]
where $a$ is the unique solution of the equation $m(t)=a$.

The Gibbs conditional principle in the standard above setting can be stated as follows.

As $n$ tends to infinity the conditional distribution of $X_{1}$ given $\left(S_{1}^{n}/n>a\right)$ is $\Pi^{a}$, the distribution with density $\pi^{a}$.

Indeed we prefer to state Gibbs principle in a form where the conditioning event is a point condition $\left(S_{1}^{n}/n=a\right) $. The conditional distribution of $X_{1}$ given $\left(S_{1}^{n}/n=a\right)$ is a well defined distribution and Gibbs conditional principle states that this conditional distribution converges to $\Pi^{a}$ as $n$ tends to infinity. In both settings, this convergence holds in total variation norm. We refer to \cite{Diaconis1} for the local form of the conditioning event; we will mostly be interested in the extension of this form in the present paper.

For all $\alpha$ (depending on $n$ or not) we will denote $p_{\alpha}$ the density of the random vector $X_{1}^{k}$ conditioned upon the local event $\left(S_{1}^{n}=n\alpha\right)$. The notation $p_{\alpha}\left(X_{1}^{k}=x_{1}^{k}\right)$ is sometimes used to denote the value of the density $p_{\alpha}$ at point $x_{1}^{k}.$ The same notation is used when $X_{1},\ldots,X_{k}$ are sampled under some $\Pi^{\alpha}$, namely $\pi^{\alpha}(X_{1}^{k}=x_{1}^{k})$.

This article is organized as follows. Notation and hypotheses are stated in Section \ref{notation}, along with some necessary facts from asymptotic analysis in the context of light tailed densities. Section \ref{secGibbs} provides a local Gibbs conditional principle under EDP, namely producing the approximation of the conditional density of $X_{1}$ conditionally on $\left(S_{1}^{n}/n=a_{n}\right)$ for sequences $a_{n}$ which tend to infinity. We explore two rates of growth for the sequence $a_{n}$, which yield two different approximating distributions for the conditional law of $X_{1}$. The first one extends the classical approximation by the tilted one,
substituting $\pi^{a}$ by $\pi^{a_{n}}$. The second case, which corresponds to a faster growth of $a_{n}$, produces an approximation of a different kind. It may be possible to explore faster growth conditions than those considered here, leading to a wide class of approximating distributions; this would require some high order Edgeworth expansions for triangular arrays of variables, extending the corresponding result of order $3$ presented in this paper; we did not move further in this direction, in order to avoid too many technicalities.\bigskip

For fixed $k$ and fixed $a_{n}=a>E(X_{1})$ it is known that the r.v's $X_{1},\ldots,X_{k}$ are asymptotically independent given $\left( S_{1}^{n}/n=a_{n}\right)$; see \cite{Diaconis1}. This statement is explored when $a_{n}$ grows to infinity with $n$, keeping $k$ fixed. It is shown that the asymtotic independence property holds for sequences $a_{n}$ with moderate growth, and that independence fails for sequences $a_{n}$ with fast growth.

The local approximation of the density of $X_{1}$ conditionally on $\left(S_{1}^{n}/n=a_{n}\right) $ is further extended to typical paths under the conditional sampling scheme, which in turn provides the approximation in variation norm for the conditional distribution; the method used here follows closely the approach by \cite{Bronia}. The differences between the Gibbs principles in LDP and EDP are discussed. Section \ref{secExceedance} states similar results in the case when the conditioning event is $\left(S_{1}^{n}/n>a_{n}\right)$.

The main tools to be used come from asymptotic analysis and local limit theorems, developped from \cite{Feller} and \cite{Bingham}; we also have borrowed a number of arguments from \cite{Nagaev}. An Edgeworth expansion for some special array of independent r.v's with tilted distribution and argument moving to infinity with the row-size is needed; its proof is differed to the Section \ref{appendix}. The basic Abelian type result which is used is stated in \cite{BirBroCao}.

\section{Notation and hypotheses}\label{notation}
Thereafter we will use indifferently the notation $f(t)\underset{t\rightarrow\infty}{\sim}g(t)$ and $f(t)\underset{t\rightarrow \infty}{=}g(t)(1+o(1))$ to specify that $f$ and $g$ are asymptotically equivalent functions.\newline

The density $p$ is assumed to be of the form 
\begin{equation}
p(x)=\exp(-(g(x)-q(x))),\hspace{0.4cm}x\in\mathbb{R}_{+}.  \label{2.1}
\end{equation}

The function $q$ is assumed to be bounded, so that the asymptotic behaviour of $p$ is captured through the function $g$. The function $g$ is positive, convex, four times differentiable and satisfies 
\begin{equation}
\frac{g(x)}{x}\underset{x\rightarrow\infty}{\longrightarrow}\infty.
\label{2.2}
\end{equation}
Define 
\begin{equation}
h(x):=g^{\prime}(x). \label{h}
\end{equation}
In the present context, due to (\ref{2.2}) and the assumed conditions on $q$ to be stated hereunder, $t^{+}=+\infty $.

Not all positive convex $g$'s satisfying (\ref{2.2}) are adapted to our purpose. We follow the line of Juszczak and Nagaev \cite{Nagaev} to describe the assumed regularity conditions of $h$. See also \cite{BalKluRes1993} for somehow similar conditions.

We firstly assume that the function $h$, which is a positive function defined on $\mathbb{R}_{+}$, is either regularly or rapidly varying in a neighborhood of infinity; the function $h$ is monotone and, by (\ref{2.2}), $h(x)\rightarrow\infty$ when $x\rightarrow\infty$.

The following notation is adopted.

$RV(\alpha)$ designates the class of regularly varying functions of index $\alpha$ defined on $\mathbb{R}_{+}$,
\[
\psi(t):=h^{\leftarrow}(t)
\]
designates the inverse of $h.$ Hence $\psi$ is monotone for large $t$ and $\psi(t)\rightarrow\infty$ when $t\rightarrow\infty$, $\sigma^{2}(x):=1/h^{\prime}(x)$, $\hat{x}:=\hat{x}(t)=\psi(t)$, $\hat{\sigma}:=\sigma(\hat{x})=\sigma(\psi(t))$.

The two cases considered for $h$, the regularly varying case and the rapidly varying case, are described below. The first one is adapted to regularly varying functions $g$, whose smoothness is described through the following condition pertaining to $h$.\newline
\label{DEF2.1} \textbf{The Regularly varying case.} It will be assumed that $h$ belongs to the subclass of $RV(\beta)$, $\beta>0$, with 
\[
h(x)=x^{\beta }l(x),
\]
where the Karamata form of the slowly varying function $l$ takes the form 
\begin{equation}
l(x)=c\exp \int_{1}^{x}\frac{\epsilon (u)}{u}du  \label{Karamata1}
\end{equation}
for some positive $c$. We assume that $x\mapsto \epsilon (x)$ is twice differentiable and satisfies 
\begin{equation}
\left\{ 
\begin{array}{lll}
& \epsilon(x)\underset{x\rightarrow\infty}{=}o(1), &  \\ 
& x|\epsilon^{\prime}(x)|\underset{x\rightarrow\infty}{=}O(1), &  \\ 
& x^{2}|\epsilon^{(2)}(x)|\underset{x\rightarrow\infty}{=}O(1). & 
\end{array}
\right.  \label{2.3}
\end{equation}
It will also be assumed that 
\begin{equation}
|h^{(2)}(x)|\in RV(\theta) \label{h^(2)}
\end{equation}
where $\theta$ is a real number such that $\theta\leq\beta -2$.

\begin{remark}
\label{REM2.1} Under (\ref{Karamata1}), when $\beta\not=1$ then, under (\ref{h^(2)}), $\theta=\beta-2$. Whereas, when $\beta=1$ then $\theta\leq\beta -2$. A sufficient condition for the last assumption (\ref{h^(2)}) is that $\epsilon^{\prime}(t)\in RV(\gamma)$, for some $\gamma<-1$. Also in this case when $\beta=1$, then $\theta=\beta+\gamma-1$.
\end{remark}

\begin{example}
\label{EX2.1} {\textbf{\emph{Weibull density.}}} Let $p$ be a Weibull density with shape parameter $k>1$ and scale parameter 1, namely 
\begin{align*}
p(x)& =kx^{k-1}\exp(-x^{k}),\hspace{1cm}x\geq 0 \\
& =k\exp(-(x^{k}-(k-1)\log x)).
\end{align*}
Take $g(x)=x^{k}-(k-1)\log x$ and $q(x)=0$. Then it holds 
\[
h(x)=kx^{k-1}-\frac{k-1}{x}=x^{k-1}\left(k-\frac{k-1}{x^{k}}\right).
\]
Set $l(x)=k-(k-1)/x^{k},x\geq 1$, which verifies 
\[
l^{\prime}(x)=\frac{k(k-1)}{x^{k+1}}=\frac{l(x)\epsilon(x)}{x}
\]
with 
\[
\epsilon(x)=\frac{k(k-1)}{kx^{k}-(k-1)}.
\]
Since the function $\epsilon(x)$ satisfies the three conditions in (\ref{2.3}), then $h(x)\in RV(k-1)$.
\end{example}

\label{DEF2.2} \textbf{The Rapidly varying case.} Here we have $h^{\leftarrow}(t)=\psi(t)\in RV(0)$ and 
\begin{equation}
\psi(t)=c\exp\int_{1}^{t}\frac{\epsilon(u)}{u}du,  \label{Karamata 2}
\end{equation}
for some positive $c$, and $t\mapsto\epsilon(t)$ is twice differentiable with 
\begin{equation}
\left\{ 
\begin{array}{lll}
& \epsilon(t)\underset{t\rightarrow\infty}{=}o(1), &  \\ 
& \frac{t\epsilon^{\prime}(t)}{\epsilon(t)}\underset{t\rightarrow \infty}{\longrightarrow }0, &  \\ 
& \frac{t^{2}\epsilon^{(2)}(t)}{\epsilon(t)}\underset{t\rightarrow \infty }{\longrightarrow}0. & 
\end{array}
\right.  \label{2.4}
\end{equation}
Note that these assumptions imply that $\epsilon(t)\in RV(0)$.

\begin{example}
\label{EX2.2} {\textbf{\emph{A rapidly varying density.}}} Define $p$ through 
\[
p(x)=c\exp(-e^{x-1}),x\geq 0.
\]
Then $g(x)=h(x)=e^{x-1}$ and $q(x)=0$ for all non negative $x$. We show that $h(x)$ is a rapidly varying function. It holds $\psi(t)=\log t+1$. Since $\psi^{\prime}(t)=1/t$, let $\epsilon(t)=1/(\log t+1)$ so that $\psi^{\prime}(t)=\psi(t)\epsilon(t)/t$. Moreover, the three conditions of (\ref{2.4}) are satisfied. Thus $\psi(t)\in RV(0)$ and $h(x)$ is a rapidly varying function.
\end{example}

Denote by $\mathcal{R}$ the class of functions with either regular variation defined as in Case \ref{DEF2.1} or with rapid variation defined as in Case \ref{DEF2.2}.

We now state hypotheses pertaining to the bounded function $q$ in (\ref{2.1}). We assume that 
\begin{equation}
|q(x)|\in RV(\eta),\mbox{ for some $\eta<\theta-\frac{3\beta}{2}-\frac{3}{2}$ if $h\in RV(\beta)$}  \label{2.5}
\end{equation}
and 
\begin{equation}
|q(\psi(t))|\in RV(\eta),
\mbox{ for some $\eta<-\frac{1}{2}$ if $h$ is rapidly varying.}  \label{2.6}
\end{equation}

We will make use of the following result (see \cite{BirBroCao} Thm 3.1).

\begin{theorem}
\label{THM3.1}Let $p(x)$ be defined as in (\ref{2.1}) and $h(x)$ belong to $\mathcal{R}$. Denote by $m(t)$, $s^{2}(t)$ and $\mu_{j}(t)$ for $j=3,4,...$ the functions defined in (\ref{m}), (\ref{s^2}) and (\ref{mu_j}). Then it holds 
\begin{align*}
m(t)& \underset{t\rightarrow\infty}{=}\psi(t)(1+o(1)), \\
s^{2}(t)& \underset{t\rightarrow\infty}{=}\psi^{\prime}(t)(1+o(1)), \\
\mu_{3}(t)& \underset{t\rightarrow\infty}{=}\psi^{(2)}(t)(1+o(1)), \\
\mu_{j}(t)& \underset{t\rightarrow\infty}{=}\left\{ 
\begin{array}{ll}
M_{j}s^{j}(t)(1+o(1)), & \mbox{for even $j>3$} \\ 
\frac{(M_{j+3}-3jM_{j-1})\mu_{3}(t)s^{j-3}(t)}{6}(1+o(1)), & \mbox{for odd $j>3$}
\end{array}
\right. ,
\end{align*}
where $M_{i}$, $i>0$, denotes the $i$th order moment of standard normal distribution.
\end{theorem}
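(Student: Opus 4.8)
The strategy is to compute the moments of the tilted variable $\mathcal{X}_t$ directly from the cumulant generating function $\log\Phi(t)$ and then feed in the known asymptotics of the first three derivatives. Recall that $m=(\log\Phi)'$, $s^2=(\log\Phi)''=m'$, and $\mu_j=(\log\Phi)^{(j)}$ for $j\ge 3$ are precisely the cumulants of order $1,2,j$ of $\mathcal{X}_t$; the higher centered \emph{moments} are polynomials in these cumulants via the standard moment–cumulant relations. So the first two statements, $m(t)\sim\psi(t)$ and $s^2(t)\sim\psi'(t)$, and the third, $\mu_3(t)\sim\psi''(t)$, are exactly what one gets by differentiating the saddlepoint relation $m(t)\approx h^{\leftarrow}(t)=\psi(t)$; these should be quoted or re-derived from a Laplace/saddlepoint analysis of $\Phi(t)=\int e^{tx}p(x)\,dx$ with $p=\exp(-(g-q))$, the saddlepoint being $\hat x=\psi(t)$ where $h(\hat x)=g'(\hat x)=t$, and $\hat\sigma^2=1/h'(\hat x)=\sigma^2(\hat x)$. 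I would first set up this saddlepoint picture carefully: write $tx-g(x)+q(x)$, expand around $\hat x$, note the quadratic coefficient is $-\tfrac12 h'(\hat x)=-1/(2\hat\sigma^2)$, and record that the Gaussian approximation gives $\Phi(t)\sim \sqrt{2\pi}\,\hat\sigma\,e^{t\hat x-g(\hat x)}(1+o(1))$ up to the bounded factor from $q$, which under hypotheses (\ref{2.5})–(\ref{2.6}) is asymptotically negligible in the relevant log-scale.

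The heart of the matter is the dichotomy for $\mu_j$ with $j>3$. The point is that the tilted variable $\mathcal{X}_t$, once centered and scaled by $\hat\sigma=s(t)$, is asymptotically standard normal — this is the local CLT content behind Theorem~\ref{THM3.1} — but the \emph{rate} at which the non-Gaussian corrections vanish differs between even and odd orders. For even $j$, the leading term of the $j$th centered moment is the Gaussian value $M_j s^j(t)$, and one must show the correction is $o(s^j(t))$. For odd $j>3$, the Gaussian value is $0$, so the leading term comes from the first non-vanishing cumulant contribution, which is the one built from $\mu_3$; the moment–cumulant formula produces exactly the combinatorial coefficient $(M_{j+3}-3jM_{j-1})/6$ multiplying $\mu_3(t)s^{j-3}(t)$. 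Concretely I would: (i) write the centered moment $\mu_j^{\mathrm{cent}}=E[(\mathcal{X}_t-m(t))^j]$ in terms of cumulants $\kappa_2=s^2,\kappa_3=\mu_3,\kappa_4=\mu_4,\dots$ using the partition/Bell-polynomial expansion; (ii) establish a hierarchy of bounds $\mu_r(t)=O(s^r(t))$ for all $r$ (or more precisely $\mu_r/s^r\to$ a constant, $0$ for odd $r$ except via $\mu_3$-terms), proved by induction on $r$ using Theorem~\ref{THM3.1}'s own recursive statement together with the regularity of $\psi$; (iii) identify, for each parity, the dominant partition: for even $j$ it is the all-pairs partition giving $M_j s^j$, for odd $j$ it is the partitions using exactly one triple and the rest pairs, giving the stated coefficient times $\mu_3 s^{j-3}$; (iv) check all other partitions contribute lower order, using that $\mu_3(t)/s^3(t)=\psi''/(\psi')^{3/2}\to 0$ under (\ref{2.3}) resp. (\ref{2.4}) — this smallness of the skewness is what makes the triple-containing partitions subdominant to the pure-pair one in the even case but dominant over everything non-Gaussian in the odd case.

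The main obstacle is step (iii)–(iv): controlling the full sum over partitions and showing precisely which term dominates, because it requires simultaneous control of \emph{all} cumulant ratios $\mu_r(t)/s^r(t)$ and their relative sizes as $t\to\infty$, for both the regularly varying and rapidly varying regimes. This is where the regularity conditions (\ref{2.3}), (\ref{2.4}) on $\epsilon$ and the index condition (\ref{h^(2)}) with $\theta\le\beta-2$ get used in an essential way: they guarantee that $\psi$ and its derivatives vary regularly enough that $\psi^{(k)}(t)$ is comparable to $\psi'(t)^{?}$ in a predictable way, so that $\mu_3\sim\psi''$ really is of smaller order than $s^3\sim(\psi')^{3/2}$, and higher cumulants are even smaller. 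I expect the cleanest route is to prove the even-$j$ and odd-$j$ statements together by a single induction on $j$, at each stage peeling off the dominant partition and absorbing the remainder into the inductive bound; the rapidly varying case should run in parallel with $\psi\in RV(0)$ replacing $\psi\in RV(\beta-?)$, the conditions (\ref{2.4}) playing the role of (\ref{2.3}). A secondary technical point is handling the bounded perturbation $q$: since $q$ only contributes a bounded multiplicative factor $e^{q(x)}$ inside the integrals, its effect on cumulants is controlled by the regular-variation hypotheses (\ref{2.5})–(\ref{2.6}) on $q\circ\psi$, ensuring it perturbs each $\mu_j$ only at relative order $o(1)$; I would isolate this as a preliminary lemma so the main induction can be carried out as if $q\equiv 0$.
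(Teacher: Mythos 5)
The paper itself gives no proof of Theorem \ref{THM3.1}: it is imported from \cite{BirBroCao} (Thm 3.1), so the comparison is with that reference, which obtains each moment by a direct Laplace (saddlepoint) analysis of the tilted integral around $\hat x=\psi(t)$, i.e.\ a Gaussian-plus-cubic-correction approximation of the tilted density. Your first half --- the saddlepoint setup, the identification $\hat x=\psi(t)$, $\hat\sigma^2=1/h'(\hat x)$, the estimate $\Phi(t)\sim\sqrt{2\pi}\,\hat\sigma e^{t\hat x-g(\hat x)}$, the treatment of $q$ as an asymptotically negligible factor, and hence the first three asymptotics --- is in exactly that spirit and is fine as a plan.

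The genuine gap is in your route for $j>3$ via moment--cumulant partitions. First, a definitional point you must settle: for $j\geq 4$ the quantity $(\log\Phi)^{(j)}(t)$ is the $j$-th cumulant of $\mathcal{X}_t$, not its centered moment, and the displayed asymptotics can only be true of centered moments (e.g.\ $\kappa_4=\mu_4^{\mathrm{cent}}-3s^4$, so $\mu_4^{\mathrm{cent}}\sim 3s^4$ forces $\kappa_4=o(s^4)$, contradicting ``$\kappa_4\sim M_4s^4$''); your proposal drifts between the two readings and must fix one. Second, and more seriously, your steps (ii)--(iv) require quantitative control of \emph{all} higher cumulants, and this is precisely what is missing: for even $j$ you need $\kappa_4=o(s^4)$, and for odd $j$ you need, e.g., $\kappa_5=o(\mu_3 s^{2})$, $\kappa_7=o(\mu_3 s^{4})$, because partitions containing a single block of size $5,7,\dots$ compete directly with the one-triple-plus-pairs term. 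These are genuinely new Abelian estimates: they do not follow from the three stated relations (so invoking ``the theorem's own recursive statement'' in your induction is circular), nor from crude bounds such as $E\lvert\mathcal{X}_t-m\rvert^{r}=O(s^{r})$, which only give $\kappa_r=O(s^r)$ --- not enough, since $\mu_3 s^{r-3}=o(s^{r})$. Moreover, identifying $\kappa_r$ with $\psi^{(r-1)}(t)$ for $r\geq 5$ would require control of $\epsilon^{(3)},\epsilon^{(4)},\dots$, whereas hypotheses (\ref{2.3}) and (\ref{2.4}) stop at $\epsilon^{(2)}$. The cited proof avoids this entirely by estimating each centered moment $\int (x-m(t))^{j}e^{tx}p(x)\,dx/\Phi(t)$ directly by Laplace's method, which is also where the coefficient $(M_{j+3}-3jM_{j-1})/6$ arises; to complete your argument you would either have to prove the missing higher-cumulant estimates under additional smoothness assumptions, or switch to that per-moment Laplace analysis.
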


\begin{corollary}
\label{3cor1} Let $p(x)$ be defined as in (\ref{2.1}) and $h(x)\in \mathfrak{R}$. Then it holds as $t\rightarrow \infty $ 
\[
\frac{\mu _{3}(t)}{s^{3}(t)}\longrightarrow 0.
\]
\end{corollary}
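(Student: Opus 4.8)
The plan is to leverage Theorem \ref{THM3.1}, which provides precise asymptotic equivalents for $\mu_3(t)$ and $s^2(t)$, and to reduce the claim to a statement purely about the function $\psi$ and its derivatives. Since $\mu_3(t) \sim \psi^{(2)}(t)$ and $s^2(t) \sim \psi'(t)$, we have
\[
\frac{\mu_3(t)}{s^3(t)} \underset{t\rightarrow\infty}{\sim} \frac{\psi^{(2)}(t)}{\left(\psi'(t)\right)^{3/2}},
\]
so it suffices to show that the right-hand side tends to $0$. First I would treat the two cases of the class $\mathcal{R}$ separately, since $\psi$ behaves quite differently in each.

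In the regularly varying case, $h \in RV(\beta)$ with $\beta > 0$, so $\psi = h^{\leftarrow} \in RV(1/\beta)$; hence $\psi'(t) \in RV(1/\beta - 1)$ and $\psi^{(2)}(t) \in RV(1/\beta - 2)$ by the monotone density theorem / standard differentiation rules for regularly varying functions (one must check the required monotonicity and smoothness, which follow from the conditions (\ref{2.3}) on $\epsilon$ and from (\ref{h^(2)})). Then $\left(\psi'(t)\right)^{3/2} \in RV\left(\frac{3}{2}(1/\beta - 1)\right)$, and the ratio $\psi^{(2)}(t)/\left(\psi'(t)\right)^{3/2}$ is regularly varying of index
\[
\left(\frac{1}{\beta} - 2\right) - \frac{3}{2}\left(\frac{1}{\beta} - 1\right) = -\frac{1}{2}\cdot\frac{1}{\beta} - \frac{1}{2} = -\frac{1 + \beta}{2\beta} < 0,
\]
so it tends to $0$. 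In the rapidly varying case, $\psi \in RV(0)$ with the Karamata representation (\ref{Karamata 2}), so $\psi'(t) = \psi(t)\epsilon(t)/t$; differentiating once more and using the conditions (\ref{2.4}) on $\epsilon$ one gets $\psi^{(2)}(t) \sim -\psi(t)\epsilon(t)/t^2$ (the dominant term, since $t\epsilon'/\epsilon \to 0$ and $\psi'/\psi \to 0$). Then
\[
\frac{\psi^{(2)}(t)}{\left(\psi'(t)\right)^{3/2}} \sim \frac{-\psi(t)\epsilon(t)/t^2}{\left(\psi(t)\epsilon(t)/t\right)^{3/2}} = \frac{-1}{t^{1/2}}\cdot\frac{1}{\left(\psi(t)\epsilon(t)\right)^{1/2}},
\]
and since $\psi(t)\epsilon(t) \to \infty$ or at least is bounded below away from $0$ while $t^{1/2}\to\infty$, this vanishes; in fact even a crude bound $|\psi^{(2)}/(\psi')^{3/2}| \le C t^{-1/2}(\psi\epsilon)^{-1/2}$ suffices.

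The main obstacle I anticipate is the bookkeeping in the rapidly varying case: one must justify that differentiating the relation $\psi'(t) = \psi(t)\epsilon(t)/t$ yields $\psi^{(2)}(t) \sim -\psi(t)\epsilon(t)/t^2$, i.e. that among the three terms arising from the product/quotient rule — namely $\psi'\epsilon/t$, $\psi\epsilon'/t$, and $-\psi\epsilon/t^2$ — the last dominates. This is exactly where hypotheses (\ref{2.4}) enter: $\psi'\epsilon/t = \psi\epsilon^2/t^2 = o(\psi\epsilon/t^2)$ because $\epsilon \to 0$, and $\psi\epsilon'/t = o(\psi\epsilon/t^2)$ because $t\epsilon'/\epsilon \to 0$. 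Once this is in hand, the rest is routine asymptotic arithmetic. A cleaner alternative, avoiding second derivatives of $\psi$ altogether, is to observe directly from Theorem \ref{THM3.1} that $\mu_3(t) \sim \psi^{(2)}(t)$ and bound $\psi^{(2)}$ via $\psi'$ and $\sigma$; but the route above via regular variation indices is the most transparent and I would present it as the primary argument.
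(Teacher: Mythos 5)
Your argument is correct in substance, but it is organized differently from the paper's: the paper disposes of this corollary in one line by citing Corollaries 1 and 2 (regularly varying case) and Corollary 3 together with Lemma 3 (rapidly varying case) of \cite{BirBroCao}, whereas you rederive the statement directly from Theorem \ref{THM3.1} by reducing $\mu_3/s^3$ to $\psi^{(2)}/(\psi')^{3/2}$ and doing the regular-variation index bookkeeping yourself. What your route buys is a self-contained proof inside this paper, and your index computation in the regularly varying case (ratio of index $-(1+\beta)/(2\beta)<0$) is exactly the content of the cited corollaries; note only that $\psi^{(2)}$ may be of constant negative sign, so you should phrase everything in terms of $|\psi^{(2)}|$, and that via $\psi''(t)=-h^{(2)}(\psi(t))/\bigl(h'(\psi(t))\bigr)^{3}$ the hypothesis (\ref{h^(2)}) with $\theta\leq\beta-2$ gives $|\psi^{(2)}|\in RV(\gamma)$ with $\gamma\leq 1/\beta-2$, which only strengthens the conclusion when $\beta=1$.

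One justification in your rapidly varying case needs repair. Your derivation $\psi^{(2)}(t)\sim-\psi(t)\epsilon(t)/t^{2}$ from (\ref{2.4}) is fine (it is the same expansion the paper uses in the proof of Lemma \ref{3lemma z}), but your closing claim that $\psi(t)\epsilon(t)$ tends to infinity ``or at least is bounded below away from $0$'' is not guaranteed by the hypotheses: for instance $\psi(t)=\log\log t$, i.e.\ $\epsilon(t)=1/(\log t\,\log\log t)$, satisfies (\ref{2.4}) and has $\psi(t)\epsilon(t)=1/\log t\rightarrow 0$. The quantity you actually need is $t\,\psi(t)\epsilon(t)\rightarrow\infty$, and this does hold: as remarked after (\ref{2.4}), $\epsilon\in RV(0)$, and $\psi\in RV(0)$, so $t\,\psi(t)\epsilon(t)\in RV(1)$ and hence diverges (this is also the fact the paper invokes, in the form $t^{2}s^{2}(t)\sim t\psi(t)\epsilon(t)\rightarrow\infty$, in the proofs of Lemma \ref{Lemme s^2 self neglecting} and Lemma \ref{JensenLemme}). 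With that one-line substitution your bound
\[
\Bigl|\frac{\psi^{(2)}(t)}{(\psi'(t))^{3/2}}\Bigr|\leq\frac{C}{\bigl(t\,\psi(t)\epsilon(t)\bigr)^{1/2}}\longrightarrow 0
\]
is fully justified and the proof is complete.
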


\begin{proof}
In the regularly varying case this follows from Corollaries 1 and 2 in \cite{BirBroCao}, and in the rapidly varying case from Corollary 3 and Lemma 3 in \cite{BirBroCao}.
\end{proof}

Our results require an extension of the classical Edgeworth expansions to triangular arrays of row-wise independent and identically distributed random variables, where the expectation of the generic r.v. in the $n-$th row tends to infinity with $n.$ This can be achieved under log-concavity of $p$, i.e. when the function $q$ is the null function, or when $p$ is nearly log-concave. This is the scope of the next Section.

\section{Edgeworth expansion under extreme normalizing factors}

With $\pi^{a_{n}}$ defined through 
\[
\pi^{a_{n}}(x)=\frac{e^{tx}p(x)}{\Phi(t)},
\]
and $t$ determined by $m(t)=a_{n}$ together with $s^{2}:=s^{2}(t)$ define the normalized density of $\pi^{a_{n}}$ by 
\[
\bar{\pi}^{a_{n}}(x)=s\pi^{a_{n}}(sx+a_{n}),
\]
and denote the $n$-convolution of $\bar{\pi}^{a_{n}}(x)$ by $\bar{\pi}_{n}^{a_{n}}(x)$. Denote by $\rho_{n}$ its normalized density 
\[
\rho_{n}(x):=\sqrt{n}\bar{\pi}_{n}^{a_{n}}(\sqrt{n}x).
\]
The following result extends the local Edgeworth expansion of the distribution of normalized sums of i.i.d. r.v.'s to the present context, where the summands are generated under the density $\bar{\pi}^{a_{n}}$.
Therefore the setting is that of a triangular array of rowwise independent summands; the fact that $a_{n}\rightarrow\infty $ makes the situation unusual. We mainly adapt Feller's proof (Chapiter 16, Theorem 2 \cite{Feller}). However this variation on the classical Edgeworth expansion result requires some additional regularity assumption, which meet the requirements of Theorem \ref{THM3.1}, which are fulfilled in most models dealing with extremes and convolutions. Those are captured in cases when the density $p$ is log-concave, or nearly log concave in the upper tail. Similar conditions are considered in \cite{BoniaCao}.

\begin{theorem}
\label{3theorem1} With the above notation, uniformly upon $x$ it holds 
\[
\rho_{n}(x)=\phi (x)\Big(1+\frac{\mu_{3}}{6\sqrt{n}s^{3}}\big(x^{3}-3x\big)\Big)+o\Big(\frac{1}{\sqrt{n}}\Big).
\]
where $\phi (x)$ is standard normal density.
\end{theorem}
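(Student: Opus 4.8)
The plan is to adapt Feller's classical argument for the Edgeworth expansion of order 3 (Chapter 16, Theorem 2 in \cite{Feller}) to the present triangular array, paying attention to the fact that the row-generating density $\bar\pi^{a_n}$ varies with $n$ and that its defining tilting parameter $t=t(a_n)$ diverges. First I would pass to characteristic functions: let $\widehat{\bar\pi}^{a_n}(\xi)$ be the characteristic function of the normalized tilted variable $(\mathcal X_t - a_n)/s$, so that the characteristic function of $\rho_n$ is $\big(\widehat{\bar\pi}^{a_n}(\xi/\sqrt n)\big)^n$. By the Fourier inversion formula, $\rho_n(x)-\phi(x)\big(1+\tfrac{\mu_3}{6\sqrt n s^3}(x^3-3x)\big)$ is expressed as an integral over $\xi$ of the difference between $\big(\widehat{\bar\pi}^{a_n}(\xi/\sqrt n)\big)^n$ and the Fourier transform of the approximating signed density, namely $e^{-\xi^2/2}\big(1+\tfrac{\mu_3}{6\sqrt n s^3}(i\xi)^3\big)$. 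The goal is to show this integral is $o(1/\sqrt n)$ uniformly in $x$, which reduces to controlling that integrand on three regimes of $\xi$.

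The key steps, in order: (i) \emph{Local expansion near the origin.} Using that $\bar\pi^{a_n}$ is centered with unit variance and has third cumulant $\mu_3/s^3$, Taylor-expand $\log\widehat{\bar\pi}^{a_n}(\xi/\sqrt n)$ to get $-\xi^2/(2n)+\tfrac{(i\xi)^3\mu_3}{6 n^{3/2}s^3}+r_n(\xi)$; here one needs the fourth moment of the normalized variable to be bounded in $n$ so that the remainder is genuinely of smaller order. That boundedness is exactly what Theorem \ref{THM3.1} and Corollary \ref{3cor1} deliver: $\mu_4/s^4 \to M_4$ (finite) and $\mu_3/s^3\to 0$, so the normalized density $\bar\pi^{a_n}$ has cumulants that stay under control even as $a_n\to\infty$. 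Exponentiating and expanding $e^{r_n(\xi)}$ gives, for $|\xi|\le \delta\sqrt n$, the pointwise estimate matching the target with an error integrable against a Gaussian and of order $o(1/\sqrt n)$. (ii) \emph{Intermediate range} $\delta\sqrt n \le |\xi|\le \varepsilon n$ (or up to a fixed multiple of $\sqrt n$ times a slowly growing factor): here one shows $|\widehat{\bar\pi}^{a_n}(\xi/\sqrt n)| \le e^{-c\xi^2/n}$ for $|\xi/\sqrt n|$ small, so the $n$-th power is exponentially small; the point is that the constant $c$ can be chosen uniformly in $n$, again because the variance is normalized to $1$ and the relevant moments are uniformly bounded. (iii) \emph{Tail range} $|\xi|$ large: one needs a uniform-in-$n$ bound $\sup_{|u|\ge\delta}|\widehat{\bar\pi}^{a_n}(u)| \le 1-\kappa < 1$, i.e.\ a Cramér-type condition holding uniformly along the array. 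This is where the structural assumption on $p$ — that $p=\exp(-(g-q))$ with $g$ convex and $h=g'\in\mathcal R$, so that $\pi^{a_n}$ is (nearly) log-concave and unimodal with mode near $a_n$ — is used: log-concavity forces $\bar\pi^{a_n}$ to be a bounded unimodal density with controlled spread, from which a uniform bound away from $1$ on the characteristic function outside a neighborhood of $0$ follows.

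The main obstacle I expect is precisely step (iii) together with the uniformity throughout step (ii): in the classical i.i.d.\ setting the Cramér condition and the smoothness bounds are fixed once and for all, whereas here the generating density slides off to infinity with $n$, so every constant ($c$, $\kappa$, the implied constants in the remainder estimates) must be shown to be stable along the array. The resolution is to change variables back to the un-normalized tilted density and invoke the asymptotic equivalences of Theorem \ref{THM3.1} — which pin down $m(t)\sim\psi(t)$, $s^2(t)\sim\psi'(t)$, and the ratios $\mu_j/s^j$ to their Gaussian values — so that $\bar\pi^{a_n}$ converges, in a sufficiently strong sense (e.g.\ uniformly on compacts for its characteristic function, with uniform tail control from log-concavity), to the standard Gaussian; the uniform constants then exist by this near-compactness of the family $\{\bar\pi^{a_n}\}$. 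Once the three ranges are handled, assembling the inversion integral and reading off the stated expansion with a uniform $o(1/\sqrt n)$ error is routine, exactly as in Feller.
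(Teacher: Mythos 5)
Your proposal follows essentially the same route as the paper: Fourier inversion of the difference between $\big(\varphi^{a_n}(\tau/\sqrt n)\big)^n$ and the transform of the Edgeworth approximant, a Taylor expansion of the log-characteristic function near the origin controlled by the moment asymptotics of Theorem \ref{THM3.1} and Corollary \ref{3cor1}, and a uniform-in-$n$ Cram\'er-type bound on $\varphi^{a_n}$ away from the origin to kill the outer range. The only cosmetic differences are that the paper uses a two-range split ($|\tau|\lessgtr\omega\sqrt n$) rather than three, and obtains the uniform bound (Lemma \ref{LemmaBorne}) from Nagaev's uniform convergence of the normalized tilted density to the Gaussian together with a Parseval argument, which is the precise form of the ``near-compactness of $\{\bar\pi^{a_n}\}$'' you invoke.
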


The proof of this result is postponed to the Section \ref{appendix}.

\section{Gibbs' conditional principles under extreme events}\label{secGibbs}
We now explore Gibbs conditional principles under extreme events. The first result is a pointwise approximation of the conditional density $p_{a_{n}}\left(y_{1}\right)$ on $\mathbb{R}.$ Two cases will be considered according to the rate of growth of the sequence $a_{n}$ to infinity. For "moderate" growth our result extends the classical one pertaining to constant $a_{n}$ larger than $E(X_{1})$, since the approximating density of $p_{a_{n}}$ is the tilted distribution with parameter $a_{n}$. For sequences $a_{n}$ with fast growth, the approximating density includes a second order term which contributes to the approximation in a similar role as the tilted term; this term also appears in the first case, but is negligible with respect to the tilted density.

However this local approximation can be greatly improved when comparing $p_{a_{n}}$ to its approximation. We will first prove that the approximation holds when the fixed arbitrary $y_{1}$ is substituted by a r.v. $Y_{1}$ with distribution $P_{a_{n}}$, henceforth on a typical realization under the distribution to be approximated. The approximation therefore holds in probability under this sampling scheme; a simple Lemma then proves that such a statement implies that the total variation distance between $P_{a_{n}}$ and its approximation tends to $0$ as $n$ tends to infinity.

As a by-product we also address similar approximations for the case when the conditioning event writes $\left(S_{1}^{n}/n>a_{n}\right) .$ The case when $a_{n}$ grows to infinitly fast enough overlaps with that for which (\ref{democracy}) holds.

Extension to the approximation of the distribution of $X_{1}$ given $\left(T_{n}=a_{n}\right)$ or $\left(T_{n}>a_{n}\right)$ where 
\[
T_{n}:=\frac{1}{n}\sum_{i=1}^{n}f(X_{i})
\]
for functions $f$ satisfying appropriate conditions are considered.

For sake of completeness we also provide some information when the density $p_{a_{n}}$ is that of the vector $\left(X_{1},\ldots,X_{k}\right) $ for fixed $k$. We prove that for moderate growth of $a_{n}$ the approximating density is the product of corresponding marginal approximations, generalizing the well known result by Csiszar \cite{Csiszar84} which, in the present context, assesses the limit conditional independence of the coordinates of the vector $\left(X_{1},\ldots,X_{k}\right)$ given $\left(S_{n}>na_{n}\right)$ for fixed $a_{n}>E(X_{1})$ and fixed $k.$ At the contrary this property is lost when $a_{n}$ grows quickly to infinity.

Because of the property (\ref{democracy}) it would be of interest to consider the joint distribution of the vector $\left(X_{1},\ldots,X_{k_{n}}\right)$ given $\left(S_{n}>na_{n}\right)$ for sequences $k_{n}$ close to $n$, as done in \cite{Bronia} for sequences $a_{n}$ ranging from CLT to LDP. The extreme deviation case adds noticeable analytical difficulties.

\subsection{A local result}
Fix $y_{1}^{k}:=\left(y_{1},\ldots,y_{k}\right) $ in $\mathbb{R}^{k}$ and define $s_{i}^{j}:=y_{i}+\ldots+y_{j}$ for $1\leq i<j\leq k$. Define $t$ through 
\begin{equation}
m:=m(t):=a_{n}  \label{3lfd01}
\end{equation}
and set 
\[
s:=s(t)
\]
for brevity.

We consider two conditions pertaining to the growth of the sequence $a_{n}$ to infinity. In the first case we assume that 
\begin{equation}
\lim_{n\rightarrow\infty}\frac{a_{n}}{s\sqrt{n}}=0,
\label{croissance de aen o(1)}
\end{equation}
and in the second case we consider sequences $a_{n}$ which may grow faster to infinity, obbeying 
\begin{equation}
0<\lim\inf_{n\rightarrow\infty}\frac{a_{n}}{s\sqrt{n}}\leq\lim\sup
_{n\rightarrow\infty}\frac{a_{n}}{s\sqrt{n}}<\infty.
\label{croissance de aen O(1)}
\end{equation}

\begin{remark}
Both conditions (\ref{croissance de aen o(1)}) and (\ref{croissance de aen O(1)}) can be expressed in terms of $a_{n}$ when the variance function $V(x)$ of the distribution of $X_{1}$ is known either in closed form, or is asymptotically equivalent to some known function.\ Recall that the variance function is defined on $Im(X_{1})$ through 
\[
x\rightarrow V(x)=s^{2}om^{-1}(x).
\]
See e.g. \cite{BarLev92} for a description of distribution functions with polynomial variance function and\ \cite{Jorgensen97} for tail equivalence for the variance function in infinitely divisible distributions. In the Regularly varying case, i.e. when $h$ belongs to the subclass of $RV(\beta)$, $\beta>0,$ then standard operations on smooth regularly varying functions yield $V(x)=x^{1-\beta }$ $l(x)$ for some slowly varying function $l$; see \cite{Bingham}; hence $s(t)=a_{n}^{\left(1-\beta\right)/2}l(a_{n})$. Assuming that $V(x)\thicksim x^{2\rho}$ as $x\rightarrow\infty$, it follows that (\ref{croissance de aen o(1)}) writes $a_{n}=o\left(n^{1/(1+\rho)}\right)$ whereas (\ref{croissance de aen O(1)}) amounts to assume that $a_{n}$ is of order $n^{1/(1+\rho)}$.
\end{remark}

Denote
\[
z:=\frac{m-y_{1}}{s\sqrt{n-1}}.
\]

\begin{theorem}
\label{pThm oint conditional density sous o(1)} When (\ref{croissance de aen o(1)}) holds then it holds 
\[
p_{a_{n}}(y_{1})=p(X_{1}=y_{1}|S_{1}^{n}=na_{n})=\pi^{a_{n}}(y_{1})\Big(1+o\big(\frac{1}{\sqrt{n}}\big)\Big),
\]
\end{theorem}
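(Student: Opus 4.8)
The plan is to write the conditional density $p_{a_n}(y_1)$ as a ratio of densities via the standard disintegration formula, and then estimate numerator and denominator separately using the Edgeworth expansion of Theorem \ref{3theorem1}. First I would recall that, since the $X_i$ are i.i.d.\ with density $p$, the conditional density of $X_1$ given $S_1^n = na_n$ is
\[
p_{a_n}(y_1) = p(y_1)\,\frac{p_{S_2^n}(na_n - y_1)}{p_{S_1^n}(na_n)},
\]
where $p_{S_1^n}$ and $p_{S_2^n}$ denote the densities of the sums of $n$ and $n-1$ i.i.d.\ copies of $X_1$. The key device is the tilting identity: for any $t$, $p_{S_1^n}(na_n) = e^{-t\,na_n}\Phi(t)^n\,\pi^{a_n}_n(na_n)$, where $\pi^{a_n}_n$ is the $n$-fold convolution of the tilted density $\pi^{a_n}$, and similarly for $p_{S_2^n}$ with $n-1$ in place of $n$. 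Choosing $t$ so that $m(t) = a_n$ (as in \eqref{3lfd01}) makes $na_n$ the mean of $S_1^n$ under the tilted law, so the convolution density is being evaluated at its own center, which is exactly where the local Edgeworth expansion is sharpest.

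Next I would substitute the normalized densities. With $s = s(t)$ and $\rho_n$ as defined before Theorem \ref{3theorem1}, we have $\pi^{a_n}_n(na_n) = \frac{1}{s\sqrt n}\,\rho_n(0)$, and Theorem \ref{3theorem1} at $x=0$ gives $\rho_n(0) = \phi(0)(1+0) + o(1/\sqrt n) = \frac{1}{\sqrt{2\pi}} + o(1/\sqrt n)$, since the term $x^3 - 3x$ vanishes at $x=0$. For the numerator, $\pi^{a_n}_{n-1}(na_n - y_1) = \frac{1}{s\sqrt{n-1}}\,\rho_{n-1}(z)$ with $z = (m - y_1)/(s\sqrt{n-1})$ as defined just before the theorem. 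Here the point at which we evaluate is not the center: its distance from the center, in normalized units, is $z$. The crucial observation is that under hypothesis \eqref{croissance de aen o(1)}, $a_n/(s\sqrt n) \to 0$, and since $m = m(t) \sim \psi(t) \sim a_n$ by Theorem \ref{THM3.1} (and $y_1$ is fixed), we get $z \to 0$; in fact $z = O(a_n/(s\sqrt n)) = o(1)$. Therefore $\phi(z) = \phi(0)(1+o(1))$ and the Edgeworth correction term, being $\phi(z)\frac{\mu_3}{6\sqrt{n-1}\,s^3}(z^3 - 3z)$, is $o(1/\sqrt n)$ because $\mu_3/s^3 \to 0$ by Corollary \ref{3cor1} and $z = o(1)$. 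Also $\sqrt{n-1}/\sqrt n = 1 + O(1/n)$. Assembling:
\[
p_{a_n}(y_1) = p(y_1)\,\frac{e^{-t(n-1)a_n}\Phi(t)^{n-1}\,\frac{1}{s\sqrt{n-1}}\rho_{n-1}(z)}{e^{-tna_n}\Phi(t)^n\,\frac{1}{s\sqrt n}\rho_n(0)} = p(y_1)\,\frac{e^{t y_1}}{\Phi(t)}\cdot\frac{\rho_{n-1}(z)}{\rho_n(0)}(1+O(1/n)) = \pi^{a_n}(y_1)\bigl(1 + o(1/\sqrt n)\bigr),
\]
using $p(y_1)e^{ty_1}/\Phi(t) = \pi^{a_n}(y_1)$ and $\rho_{n-1}(z)/\rho_n(0) = 1 + o(1/\sqrt n)$.

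The main obstacle, and the point requiring the most care, is controlling the ratio $\rho_{n-1}(z)/\rho_n(0)$ to the claimed precision $o(1/\sqrt n)$ rather than merely $o(1)$. This forces one to be careful about two things: first, that $z \to 0$ at a rate fast enough that $\phi(z) = \phi(0) + o(1/\sqrt n)$ — this is where \eqref{croissance de aen o(1)} is used in an essential way, since it gives $z^2 = o(1/n)$, hence $\phi(z)/\phi(0) = e^{-z^2/2} = 1 + o(1/n) = 1 + o(1/\sqrt n)$; and second, that the $o(1/\sqrt n)$ error term in Theorem \ref{3theorem1} is genuinely uniform in $x$, so that evaluating at the moving point $z$ (rather than at a fixed point) is legitimate — this uniformity is exactly what the statement of Theorem \ref{3theorem1} provides. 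A secondary technical point is that the tilting parameter $t = t(a_n) \to t^+ = +\infty$, so all the quantities $\Phi(t), m(t), s(t), \mu_3(t)$ are evaluated along a diverging sequence; the asymptotics of Theorem \ref{THM3.1} and Corollary \ref{3cor1} are precisely designed to handle this, so the argument goes through once one is disciplined about invoking them. One should also verify that $\rho_{n-1}$ and $\rho_n$ are built from the same tilted density $\bar\pi^{a_n}$ (same $t$, determined by $a_n$), which is the case by construction, so no mismatch of tilting parameters arises between numerator and denominator.
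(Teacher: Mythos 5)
Your overall route is the same as the paper's: condition via Bayes, pass to the tilted density with $m(t)=a_{n}$ so that the convolution densities are evaluated at (or near) their centre, apply the uniform Edgeworth expansion of Theorem \ref{3theorem1} to numerator and denominator, and use Corollary \ref{3cor1} to make the skewness correction $o(1/\sqrt{n})$; the paper phrases the first step as invariance of the conditional law under tilting rather than through the explicit identity $p_{S_{1}^{n}}(u)=e^{-tu}\Phi(t)^{n}\pi_{n}^{a_{n}}(u)$, but this is only bookkeeping, and your final ratio coincides with display (\ref{Bayes formula}).

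There is, however, one genuine gap in your quantitative treatment of the Gaussian factor. You assert that \eqref{croissance de aen o(1)} gives $z^{2}=o(1/n)$, hence $\phi(z)/\phi(0)=1+o(1/n)$. This is false: since $z=(m-y_{1})/(s\sqrt{n-1})\sim a_{n}/(s\sqrt{n})$, the condition \eqref{croissance de aen o(1)} says precisely that $z\rightarrow 0$, i.e. $z^{2}=o(1)$, and $z^{2}=o(1/n)$ would force $a_{n}=o(s)$, which never holds here (in the regularly varying case $s^{2}\sim\psi^{\prime}(t)$ grows much slower than $a_{n}^{2}$, and similarly in the rapidly varying case), so $a_{n}/s\rightarrow\infty$. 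What the conclusion actually requires is the intermediate bound $z^{2}=o(1/\sqrt{n})$, so that $\sqrt{2\pi}\,\phi(z)=e^{-z^{2}/2}=1+o(1/\sqrt{n})$ and the prefactor $\frac{\sqrt{n}}{\sqrt{n-1}}\,e^{-z^{2}/2}$ is absorbed into the error; this is exactly the content of the paper's Lemma \ref{3lemma z} and of display (\ref{ordre de z_i}), obtained by writing $z^{2}\sim\frac{a_{n}^{2}}{s^{2}\sqrt{n}}\cdot\frac{1}{\sqrt{n}}$ and using the growth condition to control the first factor. As written, your argument only yields $p_{a_{n}}(y_{1})=\pi^{a_{n}}(y_{1})(1+o(1))$, not the stated $o(1/\sqrt{n})$ rate; you should replace the claim $z^{2}=o(1/n)$ by the bound of Lemma \ref{3lemma z}. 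A minor slip besides this: in your assembled display the numerator's exponential should be $e^{-t(na_{n}-y_{1})}$ rather than $e^{-t(n-1)a_{n}}$ (otherwise the ratio would produce $e^{ta_{n}}$ instead of $e^{ty_{1}}$); your final simplification $p(y_{1})e^{ty_{1}}/\Phi(t)=\pi^{a_{n}}(y_{1})$ corresponds to the correct exponent, so this is just a transcription error.
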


The proof of this result is postponed to the Section \ref{appendix}.

\begin{remark}
The above condition (\ref{croissance de aen o(1)}) is not sufficient to entail
(\ref{democracy}) to hold. This yields to the study of a similar limit conditional result under the corresponding condition (\ref{croissance de aen O(1)}).
\end{remark}

\begin{theorem}
\label{Thm point condit density sous O(1)}Assume that the sequence $a_{n}$ satisfies (\ref{croissance de aen O(1)}). Denote 
\[
\alpha:=t+\frac{\mu_{3}}{2(n-1)s^{2}}
\]
and 
\[
\beta:=(n-1)s^{2}.
\]
Then 
\begin{equation}
p(X_{1}=y_{1}|S_{1}^{n}=na_{n})=g_{a_{n}}(y_{1}):=Cp(y_{1})\mathfrak{n}\left(\alpha\beta+a_{n},\beta,y_{1}\right)(1+o(1))
\label{Thm-locSousO(1)}
\end{equation}
where $\mathfrak{n}\left(\mu,\sigma^{2},x\right)$ denotes the normal density with expectation $\mu $ and variance $\sigma^{2}$ evaluated at point $x$, and $C$ is a normalizing constant.

When (\ref{croissance de aen o(1)}) holds instead of (\ref{croissance de aen
O(1)}) then 
\[
p(X_{1}=y_{1}|S_{1}^{n}=na_{n})=\pi^{a_{n}}(y_{1})(1+o(1))
\]
for all $y_{1}$ as $n$ tends to infinity.
\end{theorem}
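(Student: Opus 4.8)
The plan is to reduce everything to the Edgeworth expansion of Theorem~\ref{3theorem1} applied to tilted sums. First I would write the conditional density as the ratio
\[
p_{a_n}(y_1)=\frac{p(y_1)\,f_{n-1}(na_n-y_1)}{f_n(na_n)},
\]
where $f_k$ denotes the density of a sum of $k$ independent copies of $X_1$, and then strip off the exponential scale by tilting at the parameter $t$ fixed by $m(t)=a_n$ as in (\ref{3lfd01}): since the density of a $k$-fold convolution of $\pi^{a_n}=\pi_t$ at a point $u$ equals $\Phi(t)^{-k}e^{tu}f_k(u)$, the factors $\Phi(t)$ and $e^{tu}$ telescope and one is left with
\[
p_{a_n}(y_1)=\pi^{a_n}(y_1)\,\frac{\pi_{t,n-1}(na_n-y_1)}{\pi_{t,n}(na_n)},
\]
$\pi_{t,k}$ being the density of a sum of $k$ i.i.d.\ variables with density $\pi^{a_n}$. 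Each summand in those two sums has mean $a_n$, variance $s^2=s^2(t)$ and standardized skewness $\mu_3/s^3$, and after the normalization introduced before Theorem~\ref{3theorem1} one has $\pi_{t,n}(na_n)=(s\sqrt n)^{-1}\rho_n(0)$ and $\pi_{t,n-1}(na_n-y_1)=(s\sqrt{n-1})^{-1}\rho_{n-1}(z)$, where $z=(m-y_1)/(s\sqrt{n-1})$ is the quantity defined just before the statement. (Here Theorem~\ref{3theorem1} is used with $n-1$ rather than $n$ convolutions; the same proof applies verbatim.)

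Next I would insert the expansion of Theorem~\ref{3theorem1}. In the denominator the argument is exactly $0$, so the Hermite factor $x^3-3x$ vanishes and $\rho_n(0)=\phi(0)+o(1/\sqrt n)$. In the numerator the argument is $z$, and under (\ref{croissance de aen O(1)}) this quantity is bounded: for fixed $y_1$ one has $z\sim a_n/(s\sqrt n)$, which by hypothesis stays between two positive constants, so $\phi(z)$ is bounded away from $0$ and the $o(1/\sqrt n)$ remainder is genuinely negligible with respect to the leading term. Using Corollary~\ref{3cor1} ($\mu_3/s^3\to0$) the cubic correction is $o(1)$, and dividing the two expansions (absorbing $\sqrt{n/(n-1)}=1+o(1)$) yields
\[
p_{a_n}(y_1)=\pi^{a_n}(y_1)\,e^{-z^2/2}\Big(1+\tfrac{\mu_3}{6\sqrt{n-1}\,s^3}\big(z^3-3z\big)\Big)(1+o(1)).
\]

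It then remains to recognise the right-hand side as $Cp(y_1)\,\mathfrak n(\alpha\beta+a_n,\beta,y_1)$. Writing $e^{-z^2/2}=\exp(-(y_1-a_n)^2/(2\beta))$ with $\beta=(n-1)s^2$, the product $\phi(z)(1+\tfrac{\mu_3}{6\sqrt{n-1}s^3}(z^3-3z))$ is, after a Taylor expansion and completion of the square (using $\mu_3/s^3\to0$), a normal density in $y_1$; incorporating it, substituting $\pi^{a_n}(y_1)=e^{ty_1}p(y_1)/\Phi(t)$, and completing the square once more in $ty_1-(y_1-a_n)^2/(2\beta)$ moves the mean to $\alpha\beta+a_n$ with $\alpha$ as in the statement, the $\mu_3$ term in $\alpha$ coming precisely from the third-order Edgeworth correction; every factor not depending on $y_1$ (powers of $\Phi(t)$, $\phi(0)^{-1}$, the Gaussian prefactors) is collected into the normalizing constant $C$, pinned down by $\int g_{a_n}=1$. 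This is (\ref{Thm-locSousO(1)}). For the last assertion one runs the same computation under (\ref{croissance de aen o(1)}): now $z\sim a_n/(s\sqrt n)\to0$ for every fixed $y_1$, hence $e^{-z^2/2}=1+o(1)$ and the cubic term is $O(\mu_3/(s^3\sqrt n))=o(1/\sqrt n)$, so the ratio of the two Edgeworth expansions collapses to $1+o(1)$ and $p_{a_n}(y_1)=\pi^{a_n}(y_1)(1+o(1))$.

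The main obstacle is the correct use of Theorem~\ref{3theorem1}: one must apply it with the tilting parameter $t$ determined by $m(t)=a_n$ (legitimate, since $a_n$ is exactly the common mean of the summands of both convolutions), with $n$ and $n-1$ rows, and one must be certain that the remainder is $o(1/\sqrt n)$ and not merely $o(1)$ uniformly over the range of arguments that actually occurs. Under (\ref{croissance de aen O(1)}) that range is bounded — this is precisely what makes the argument go through — whereas a faster growth of $a_n$ would drive $z$ to infinity and force higher-order expansions, as noted in the introduction. A secondary, purely algebraic, point is the clean absorption of the third-order Edgeworth term into a shift of the Gaussian mean while keeping the error of size $o(1)$.
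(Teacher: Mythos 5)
Your argument is correct and is essentially the paper's own: you re-derive the expansion (\ref{conddens}) by the same tilting-plus-Edgeworth route (Bayes formula, invariance under tilting at $m(t)=a_{n}$, Theorem \ref{3theorem1} applied to the normalized $n$- and $(n-1)$-fold convolutions), and then carry out the ``standard development'' the paper invokes, retaining the factor $\phi(z)$, which no longer tends to $\phi(0)$ under (\ref{croissance de aen O(1)}), and completing the square to identify $g_{a_{n}}$, with the degenerate case (\ref{croissance de aen o(1)}) handled exactly as in the paper. Your remark that boundedness of $z$ keeps $\phi(z)$ away from $0$, so the additive $o(1/\sqrt{n})$ Edgeworth remainder remains a relative error, is precisely the point that makes this sketch rigorous.
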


\begin{proof}
In contrast with the above case, the second summand in (\ref{conddens}) does not tend to $0$ any longer and contributes to the approximating density. Standard development then yields the result. When (\ref{croissance de aen o(1)}) holds instead of (\ref{croissance de aen O(1)}) then standard expansions in (\ref{Thm-locSousO(1)}) provide $g_{a_{n}}(y_{1})\sim\pi^{a_{n}}(y_{1})$ for all $y_{1}$ as $n$ tends to infinity.
\end{proof}

\subsection{On conditional independence under extreme events}

We now turn to the case when we approximate the joint conditional density $p_{a_{n}}(y_{1}^{k}):=p_{a_{n}}(y_{1},\ldots,y_{k})=p_{a_{n}}(\left.X_{1}^{k}=y_{1}^{k}\right\vert S_{1}^{n}=na_{n}).$ Denote $s_{i}^{j}:=y_{i}+\ldots+y_{j}$ for $i\leq j$ and $s_{1}^{0}:=0.$

We first consider the case when (\ref{croissance de aen o(1)}) holds. We then have

\begin{proposition}
\label{Prop indCondCroissance Lente}When (\ref{croissance de aen o(1)})
holds then for any fixed $k$ 
\[
p_{a_{n}}(y_{1}^{k})=\prod_{i=1}^{k}\pi^{m_{i}}(y_{i})\big(1+o(1/\sqrt{n})\big)
\]
where 
\begin{equation}
m_{i}:=m(t_{i}):=\frac{na_{n}-s_{1}^{i}}{n-i}.  \label{def-t_i}
\end{equation}
\end{proposition}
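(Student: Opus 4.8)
\textbf{Proof proposal for Proposition \ref{Prop indCondCroissance Lente}.}

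The plan is to reduce the $k$-dimensional conditional density to a one-dimensional statement by a telescoping/peeling argument, and then invoke Theorem \ref{pThm oint conditional density sous o(1)} at each step. The starting point is the exact identity coming from independence of the $X_i$: writing $p_{a_n}(y_1^k)$ as the ratio of the density of $(X_1^k, S_1^n)$ evaluated at $(y_1^k, na_n)$ to the density of $S_1^n$ at $na_n$, one gets the factorization
\[
p_{a_n}(y_1^k)=\frac{\prod_{i=1}^k p(y_i)\cdot p_{S_{k+1}^n}\big(na_n-s_1^k\big)}{p_{S_1^n}(na_n)},
\]
where $p_{S_{k+1}^n}$ is the density of the sum of the $n-k$ remaining i.i.d. summands. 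The idea is then to write this as a product of $k$ successive conditional densities: $p_{a_n}(y_1^k)=\prod_{i=1}^k p(X_i=y_i \mid X_1^{i-1}=y_1^{i-1}, S_1^n=na_n)$, and to observe that each factor is exactly a one-variable conditional density of the same type treated in Theorem \ref{pThm oint conditional density sous o(1)}, but for a sample of size $n-i+1$ and with a shifted conditioning level. Concretely, conditionally on $X_1^{i-1}=y_1^{i-1}$, the event $\{S_1^n=na_n\}$ becomes $\{X_i+\cdots+X_n=na_n-s_1^{i-1}\}$, i.e. the empirical mean of the $n-i+1$ variables $X_i,\ldots,X_n$ equals $(na_n-s_1^{i-1})/(n-i+1)$, which is essentially $m_{i-1}$ (with the convention $m_0=a_n$); applying the theorem gives that the $i$-th factor equals $\pi^{m_{i-1}}(y_i)(1+o(1/\sqrt{n}))$, and a small bookkeeping adjustment identifies the tilt level with $m_i$ as defined in (\ref{def-t_i}) once one is consistent about whether $y_i$ itself is already subtracted.

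The technical steps, in order, are: (i) justify the conditional factorization above rigorously, using that $p$ is a bona fide Lebesgue density and that all the convolution densities involved are continuous and strictly positive on the relevant range (this uses (\ref{2.1})–(\ref{2.2}) and steepness); (ii) check that for each fixed $i\le k$ the shifted sequence $\tilde a^{(i)}_n:=(na_n-s_1^{i})/(n-i)$ still satisfies the moderate-growth condition (\ref{croissance de aen o(1)}) — this is immediate since $k$ is fixed, $s_1^i$ is a fixed constant, and $(na_n-s_1^i)/(n-i)=a_n(1+O(1/n))$, while the relevant variance scale $s(t_i)$ is asymptotically $s(t)(1+o(1))$ by continuity of $s\circ m^{-1}$ and Theorem \ref{THM3.1}; (iii) apply Theorem \ref{pThm oint conditional density sous o(1)} to each of the $k$ factors; (iv) multiply the $k$ estimates together and absorb the $k$ error terms $(1+o(1/\sqrt n))$ into a single $(1+o(1/\sqrt n))$, which is legitimate because $k$ is fixed and $(1+o(1/\sqrt n))^k=1+o(1/\sqrt n)$; (v) finally replace $\pi^{m_{i-1}}(y_i)$ by $\pi^{m_i}(y_i)$, noting that $m_{i-1}$ and $m_i$ differ by $O(1/n)$ and that the map $a\mapsto\pi^a(y_i)$ is smooth in $a$, so the discrepancy is absorbed in the error term as well.

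The main obstacle I expect is step (ii) combined with the uniformity needed in step (iii): Theorem \ref{pThm oint conditional density sous o(1)} is a statement for a single sequence $a_n$, so to chain $k$ of its instances one must be sure that the $o(1/\sqrt n)$ is controlled uniformly over the (finitely many) shifted sequences, and more delicately that it holds for the fixed values $y_i$ being plugged in — which is fine since the theorem is pointwise in $y_1$, but one must keep track of the fact that after peeling off $X_1,\ldots,X_{i-1}$ the remaining sample size is $n-i+1$ rather than $n$, so the error is $o(1/\sqrt{n-i+1})=o(1/\sqrt n)$ for fixed $i$. A secondary, purely notational hurdle is the indexing convention in (\ref{def-t_i}): one has to be careful that $m_i$ is defined with $s_1^i$ (including $y_i$) divided by $n-i$, whereas the natural "conditioning level" before integrating out $X_i$ involves $s_1^{i-1}$ and $n-i+1$; reconciling these is elementary but must be done explicitly to land exactly on the stated formula. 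Once these are in place, the rest is the routine multiplication and error-aggregation described above.
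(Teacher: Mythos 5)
Your proposal follows essentially the same route as the paper: there too, $p_{a_n}(y_1^k)$ is peeled by Bayes' formula into the $k$ one-dimensional factors $p(X_{i+1}=y_{i+1}\mid S_{i+1}^{n}=na_n-s_1^{i})$, each factor is treated exactly as in the proof of Theorem \ref{pThm oint conditional density sous o(1)} with the tilt taken at the recentered level of the remaining summands, and the resulting Gaussian factors $\phi(z_i)$ are controlled uniformly over the finitely many indices through Lemma \ref{3lemma z} --- which is precisely the uniformity issue you single out as the main obstacle. The one quantitative slip is in your step (v): $m_{i-1}-m_i$ is of order $a_n/n$, not $O(1/n)$, and since $\log\bigl(\pi^{m_i}(y_i)/\pi^{m_{i-1}}(y_i)\bigr)$ is of order $(m_{i-1}-m_i)(a_n-y_i)/s^2\sim a_n^2/(ns^2)=z_i^2$, absorbing that substitution into a $1+o(1/\sqrt{n})$ factor requires exactly the Lemma \ref{3lemma z}--type estimate, not mere smoothness of $a\mapsto\pi^{a}(y_i)$; in fact the paper never performs this substitution --- its proof keeps the tilt at the conditioning level of the remaining sample (your $m_{i-1}$, the indexing in the displayed statement being off by one) --- so your steps (i)--(iv) already land on what the paper actually proves, and step (v) is only needed to match the statement's literal (and apparently misindexed) formula.
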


The proof of this result is postponed to the Section \ref{appendix}.

We now explore the limit conditional independence of blocks of fixed length under extreme condition. As a consequence of the above Proposition \ref{Prop indCondCroissance Lente} it holds

\begin{theorem}
\label{point conditional density e} Under (\ref{croissance de aen o(1)}) it holds 
\[
p_{a_{n}}(y_{1}^{k})=p(X_{1}^{k}=y_{1}^{k}|S_{1}^{n}=na_{n})=\left(1+o\left(\frac{1}{\sqrt{n}}\right)\right)\prod_{i=1}^{k}\pi^{a_{n}}(X_{i}=y_{i}),
\]
\end{theorem}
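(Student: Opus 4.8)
The plan is to deduce Theorem \ref{point conditional density e} from Proposition \ref{Prop indCondCroissance Lente} by showing that each factor $\pi^{m_i}(y_i)$ in the product can be replaced by $\pi^{a_n}(y_i)$ at the cost of a multiplicative error that is $1+o(1/\sqrt n)$, uniformly over the fixed, finitely many indices $i=1,\dots,k$. Since $k$ is fixed and the product has $k$ factors, it suffices to control a single generic factor: if for each $i$ we have $\pi^{m_i}(y_i)=\pi^{a_n}(y_i)(1+o(1/\sqrt n))$, then multiplying $k$ such relations together (and absorbing the already-present $1+o(1/\sqrt n)$ from the Proposition) gives the claimed identity, because $(1+o(1/\sqrt n))^{k}=1+o(1/\sqrt n)$ for fixed $k$.

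First I would quantify the gap between $m_i$ and $a_n$. From \eqref{def-t_i}, $m_i - a_n = \frac{na_n - s_1^i}{n-i} - a_n = \frac{i a_n - s_1^i}{n-i}$, so $m_i - a_n = O(a_n/n)$ for fixed $i$ and fixed $y_1^k$ (the quantities $s_1^i$ are fixed constants). Next, writing $\pi^{a}(y)=e^{t(a)y}p(y)/\Phi(t(a))$ where $t(a)$ solves $m(t)=a$, I would expand $\log\pi^{m_i}(y_i)-\log\pi^{a_n}(y_i)$ to first order. The derivative of $\log\pi^{a}(y)$ with respect to $a$ is $\frac{d t}{da}\,(y - m(t(a))) = \frac{1}{s^2(t)}(y-a)$, using $m'(t)=s^2(t)$ and $\frac{d}{dt}\log\Phi(t)=m(t)$. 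Hence $\log\pi^{m_i}(y_i)-\log\pi^{a_n}(y_i) = \frac{(m_i-a_n)(y_i-a_n)}{s^2}(1+o(1))$, and the magnitude of this is of order $\frac{a_n}{n}\cdot\frac{a_n}{s^2} = \frac{a_n^2}{n s^2} = \big(\frac{a_n}{s\sqrt n}\big)^2$. Under the moderate-growth hypothesis \eqref{croissance de aen o(1)}, $\frac{a_n}{s\sqrt n}\to 0$, so this difference is $o(1)$ — but I need $o(1/\sqrt n)$, which requires a touch more care: $\big(\frac{a_n}{s\sqrt n}\big)^2 = \frac{a_n}{s\sqrt n}\cdot\frac{a_n}{s\sqrt n}$, and since the first factor is $o(1)$ while the second is bounded only by $o(1)$ as well, the product is $o(1)$ but not obviously $o(1/\sqrt n)$. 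The resolution is that the relevant error is really $(m_i-a_n)(y_i-a_n)/s^2$ with $m_i-a_n = \frac{ia_n-s_1^i}{n-i}$, which carries an extra factor $1/n$ beyond what a crude bound suggests; so the difference is $O\!\big(\frac{a_n^2}{n^2 s^2}\big)=o(1/n)=o(1/\sqrt n)$, after also checking that the second-order term in the Taylor expansion (involving $\frac{d^2}{da^2}\log\pi^a(y)$, which brings in $s^2$ and $\mu_3/s^4$ factors controlled by Corollary \ref{3cor1}) is of even smaller order.

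I would then exponentiate: $\pi^{m_i}(y_i) = \pi^{a_n}(y_i)\exp\!\big(O(a_n^2/(n^2 s^2))\big) = \pi^{a_n}(y_i)(1+o(1/\sqrt n))$, uniformly in $i\le k$. Substituting into Proposition \ref{Prop indCondCroissance Lente} and collecting the $k$ error factors finishes the argument. The main obstacle, and the step demanding the most care, is the bookkeeping that turns the naive bound $\big(a_n/(s\sqrt n)\big)^2 = o(1)$ into the sharper $o(1/\sqrt n)$: one must use that $m_i - a_n$ is $O(a_n/n)$ rather than $O(a_n/\sqrt n)$, and verify that the Cramér-type functions $s^2(t)$, $\mu_3(t)$ evaluated at the parameter $t=t(a_n)$ behave regularly enough (via Theorem \ref{THM3.1} and Corollary \ref{3cor1}) that the higher-order Taylor remainders in the expansion of $\log\pi^a(y)$ around $a=a_n$ do not spoil the estimate. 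Everything else is routine once that estimate is in place.
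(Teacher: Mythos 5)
Your route differs from the paper's: the paper does not pass through Proposition \ref{Prop indCondCroissance Lente} at all. It re-runs the Bayes factorization tilting every factor with the single density $\pi^{a_n}$, so that the cost of not centering each factor exactly shows up in the ratio $\widetilde{\pi_{n-i-1}}(z_i)/\widetilde{\pi_{n-i}}(z_{i-1})$ of Edgeworth-expanded densities evaluated at the non-zero points $z_i\sim (i+1)a_n/(s\sqrt{n})$, and the whole matter reduces to the estimate $\sup_i z_i^2=o(1/\sqrt n)$ of display (\ref{ordre de z_i}). Your idea of instead deducing the theorem from Proposition \ref{Prop indCondCroissance Lente} by proving $\pi^{m_i}(y_i)=\pi^{a_n}(y_i)\left(1+o(1/\sqrt n)\right)$ is a legitimate alternative in principle, and your first-order computation is correct: with $t'(a)=1/s^2$, one gets $\log\pi^{m_i}(y_i)-\log\pi^{a_n}(y_i)=\frac{(m_i-a_n)(y_i-a_n)}{s^2}(1+o(1))$, of order $\frac{a_n}{n}\cdot\frac{a_n}{s^2}=\big(\frac{a_n}{s\sqrt n}\big)^2$, which is exactly the same size as the paper's $z_i^2$ terms.

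The gap is in your ``resolution''. There is no extra factor $1/n$: the only $1/n$ available is already contained in $m_i-a_n=(ia_n-s_1^i)/(n-i)=O(a_n/n)$, while $y_i-a_n$ is of order $a_n$ (the $y_i$ are fixed and $a_n\to\infty$), so the first-order term is genuinely of order $a_n^2/(ns^2)$ and not $a_n^2/(n^2s^2)$; your claim $O\big(a_n^2/(n^2s^2)\big)=o(1/n)$ double-counts the same $1/n$. Under (\ref{croissance de aen o(1)}) read literally, $a_n^2/(ns^2)=o(1)$ only, so your argument as written delivers $\prod_i\pi^{a_n}(y_i)\left(1+o(1)\right)$ rather than the stated $1+o(1/\sqrt n)$. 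To recover the theorem's rate along your route you must invoke precisely the estimate the paper itself uses, namely that $a_n^2/(ns^2)=o(1/\sqrt n)$ (Lemma \ref{3lemma z} and display (\ref{ordre de z_i}), which the paper attributes to its growth condition); with that estimate in hand, your replacement step, together with the second-order control you sketch (which is indeed of smaller order, using $\mu_3/s^3\to 0$ from Corollary \ref{3cor1}), would close the proof. As it stands, the crucial upgrade from $o(1)$ to $o(1/\sqrt n)$ rests on an incorrect bookkeeping claim.
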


The technical proof is differed to the Section \ref{appendix}.

\begin{remark}
The above result shows that asymptotically the point condition $\left(S_{1}^{n}=na_{n}\right) $ leaves blocks of $k$ of the $X_{i}^{\prime }s$ independent. Obviously this property does not hold for large values of $k$, close to $n$. A similar statement holds in the LDP range, conditioning either on $\left(S_{1}^{n}=na\right)$ (see Diaconis and Friedman \cite{Diaconis1})), or on $\left(S_{1}^{n}\geq na\right)$ (see Csiszar \cite{Csiszar84} for a general statement on asymptotic conditional independence given events with positive probability).
\end{remark}

We now turn to the case when $a_{n}$ moves more quickly to infinity. Denote
\[
m_{i}:=m(t_{i}):=\frac{na_{n}-s_{1}^{i-1}}{n-i+1}
\]
together with 
\[
s_{i}^{2}:=s^{2}(t_{i}).
\]

\begin{theorem}
Assume that (\ref{croissance de aen O(1)}) holds. Then for all fixed $k$ it holds 
\[
p(X_{1}^{k}=y_{1}^{k}|S_{1}^{n}=na_{n})=\prod_{i=1}^{k}g_{i}(y_{i})\left(1+o\left( 1\right)\right)
\]
where 
\[
g_{i}(y_{i}):=C_{i}p(y_{i})\mathfrak{n}\left(\alpha_{i}\beta_{i}+a_{n},\beta_{i},y_{i}\right)
\]
and 
\begin{equation}
\alpha_{i}:=t_{i}+\frac{\mu_{3}}{2(n-i+1)s_{i}^{2}}  \label{alfa_i}
\end{equation}
\begin{equation}
\beta_{i}:=(n-i+1)s_{i}^{2}.  \label{beta_i}
\end{equation}
\end{theorem}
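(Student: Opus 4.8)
The plan is to iterate the single-coordinate result of Theorem \ref{Thm point condit density sous O(1)} in the same way that Theorem \ref{point conditional density e} iterates Theorem \ref{pThm oint conditional density sous o(1)}, but now keeping the second-order normal factor alive at every stage. First I would write, by the chain rule for densities,
\[
p(X_{1}^{k}=y_{1}^{k}\mid S_{1}^{n}=na_{n})=\prod_{i=1}^{k}p\bigl(X_{i}=y_{i}\;\big|\;S_{i}^{n}=na_{n}-s_{1}^{i-1}\bigr),
\]
which holds because, conditionally on $S_{1}^{n}=na_{n}$ and on $X_{1}^{i-1}=y_{1}^{i-1}$, the variable $X_{i}$ together with $X_{i+1},\ldots,X_{n}$ is just an i.i.d.\ sample of size $n-i+1$ constrained to have sum $na_{n}-s_{1}^{i-1}$. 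Each factor on the right is exactly of the form treated by Theorem \ref{Thm point condit density sous O(1)}, but with sample size $n-i+1$ in place of $n$ and with the conditioning level $\bigl(na_{n}-s_{1}^{i-1}\bigr)/(n-i+1)=m_{i}$ in place of $a_{n}$; applying that theorem gives
\[
p\bigl(X_{i}=y_{i}\mid S_{i}^{n}=na_{n}-s_{1}^{i-1}\bigr)=C_{i}\,p(y_{i})\,\mathfrak{n}\bigl(\alpha_{i}\beta_{i}+m_{i},\beta_{i},y_{i}\bigr)\bigl(1+o(1)\bigr),
\]
with $t_{i}$ defined by $m(t_{i})=m_{i}$ and $\alpha_{i},\beta_{i}$ as in (\ref{alfa_i}), (\ref{beta_i}). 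The statement as printed has $a_{n}$ rather than $m_{i}$ inside $\mathfrak{n}$; since $k$ is fixed and $s_{1}^{i-1}$ is bounded in $n$, one has $m_{i}=a_{n}+O(1/n)$ and this discrepancy is absorbed into the $\mathfrak n$ factor and the normalizing constant, so I would either reconcile the notation or simply note that $\mathfrak n(\alpha_i\beta_i+m_i,\beta_i,y_i)=\mathfrak n(\alpha_i\beta_i+a_n,\beta_i,y_i)(1+o(1))$.

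The three points that need care, and that I would treat in this order, are: (i) checking that the hypothesis (\ref{croissance de aen O(1)}) for the original sequence $a_{n}$ at sample size $n$ transfers to the hypothesis needed to apply Theorem \ref{Thm point condit density sous O(1)} to the $i$-th factor at sample size $n-i+1$ and level $m_{i}$; this is where one uses that $k$ is fixed, so $n-i+1\sim n$, $m_{i}\sim a_{n}$, and by Theorem \ref{THM3.1} and Corollary \ref{3cor1} the attendant quantities $s_{i}=s(t_{i})\sim s(t)$, $\mu_{3}(t_{i})\sim\mu_{3}(t)$, whence $m_{i}/(s_{i}\sqrt{n-i+1})$ has the same $\limsup$ and $\liminf$ behaviour as $a_{n}/(s\sqrt n)$ and (\ref{croissance de aen O(1)}) is preserved. (ii) Uniformity and accumulation of the error terms: each factor carries its own $(1+o(1))$, and since there are only $k$ of them (a fixed number) the product of the error factors is again $1+o(1)$; I would make explicit that the $o(1)$'s here are pointwise in $y_{1}^{k}$, which is all the theorem claims. (iii) Rewriting $C_{i}p(y_{i})\mathfrak n(\alpha_{i}\beta_{i}+a_{n},\beta_{i},y_{i})$ as the advertised $g_{i}(y_{i})$; this is just bookkeeping once (i) is in place.

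The main obstacle is point (i): one must verify that applying Theorem \ref{Thm point condit density sous O(1)} successively is legitimate, i.e.\ that the perturbed levels $m_{i}$ still fall in the regime (\ref{croissance de aen O(1)}) and that the perturbed moment functions $s_{i}^{2}=s^{2}(t_{i})$ and $\mu_{3}(t_{i})$ are asymptotically equivalent to $s^{2}(t)$ and $\mu_{3}(t)$. This is a routine consequence of the regular/rapid variation built into the class $\mathcal R$ (Theorem \ref{THM3.1}): $m$ is asymptotically $\psi$, $\psi$ and $\psi'$ are slowly/regularly varying, and $m_{i}/a_{n}\to 1$ forces $t_{i}/t\to 1$ and hence $s(t_{i})/s(t)\to1$, $\mu_{3}(t_{i})/\mu_{3}(t)\to1$. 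Once this equivalence is recorded, the remaining steps are the mechanical chain-rule factorization and collecting the finitely many error terms, exactly paralleling the proof of Theorem \ref{point conditional density e} but retaining the non-negligible normal factor that characterizes the fast-growth regime.
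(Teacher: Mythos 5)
Your route --- factor the joint conditional density by the chain rule as $\prod_{i=1}^{k}p(X_{i}=y_{i}\mid S_{i}^{n}=na_{n}-s_{1}^{i-1})$ and apply Theorem \ref{Thm point condit density sous O(1)} to each factor at level $m_{i}$ and sample size $n-i+1$, checking via Theorem \ref{THM3.1} and Corollary \ref{3cor1} that condition (\ref{croissance de aen O(1)}) is inherited and then collecting the $k$ error factors --- is exactly the argument the paper intends (it prints no separate proof, relying on the decomposition already used for Proposition \ref{Prop indCondCroissance Lente} and Theorem \ref{point conditional density e} combined with the one-dimensional fast-growth result), so your proposal is correct and essentially the same approach. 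One small slip: $m_{i}-a_{n}=\bigl((i-1)a_{n}-s_{1}^{i-1}\bigr)/(n-i+1)=O(a_{n}/n)$, not $O(1/n)$, but since $(m_{i}-a_{n})\,y_{i}/\beta_{i}\rightarrow 0$ for fixed $y_{i}$ the replacement of $m_{i}$ by $a_{n}$ inside $\mathfrak{n}$ is still absorbed into the $1+o(1)$ and the normalizing constant, as you argue.
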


\begin{remark}
When (\ref{croissance de aen o(1)}) holds, the above result is a refinement of the
result in Proposition \ref{Prop indCondCroissance Lente}. Under (\ref{croissance de aen O(1)}) and when (\ref{croissance de aen o(1)}) does not hold, the approximations obtained in Lemma \ref{3lemma z} do not hold, and the approximating density cannot be stated as a product of densities under which independence holds. In that case it follows that the conditional independence property under extreme events does not hold any longer.
\end{remark}

\subsection{Strenghtening the local Gibbs conditional principle}

We now turn to a stronger approximation of $p_{a_{n}}$. Consider $Y_{1}$ a r.v. with density $p_{a_{n}}$, and the random variable $p_{a_{n}}\left(Y_{1}\right):=p(\left.X_{1}=Y_{1}\right\vert S_{1}^{n}=na_{n})$. Denote 
\[
g_{a_{n}}(x):=Cp(x)\mathfrak{n}\left(\alpha\beta+a_{n},\beta,x\right)
\]
where $\alpha:=\alpha_{n}$ and $\beta:=\beta_{n}$ are defined in (\ref{alfa_i}) and (\ref{beta_i}), and $C$ is a normalizing constant, it holds

\begin{theorem}
\label{ThmLocalProba}(i) When (\ref{croissance de aen o(1)}) holds then
\[
p_{a_{n}}\left(Y_{1}\right)=\pi^{a_{n}}\left(Y_{1}\right)\left(1+R_{n}\right)
\]
where the tilted density at point $a_{n}$, and where $R_{n}$ is a function of $Y_{1}$ such that $P_{a_{n}}\left(\left\vert R_{n}\right\vert>\delta\sqrt{n}\right)\rightarrow 0$ as $n\rightarrow \infty $ for any positive $\delta$. When (\ref{croissance de aen O(1)}) holds then, with $t_{n}$ such that $m(t_{n})=a_{n}$, $\alpha:=\alpha_{n}$ and $\beta:=\beta_{n}$ 
\[
p_{a_{n}}\left(Y_{1}\right)=g_{a_{n}}(Y_{1})(1+R_{n}^{\prime})
\]
where $P_{a_{n}}\left(\left\vert R_{n}^{\prime}\right\vert>\delta\right)\rightarrow 0$ as $n\rightarrow\infty$ for any positive $\delta$.
\end{theorem}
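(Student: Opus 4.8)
The plan is to leverage the pointwise local approximations already established in Theorem \ref{pThm oint conditional density sous o(1)} and Theorem \ref{Thm point condit density sous O(1)}, upgrading them from a statement at a fixed deterministic point $y_1$ to a statement along a typical realization $Y_1$ drawn from $P_{a_n}$. The key observation is that those theorems already give us, for \emph{every} $y_1$, an identity of the form $p_{a_n}(y_1) = \pi^{a_n}(y_1)(1 + r_n(y_1))$ (respectively $= g_{a_n}(y_1)(1 + r_n'(y_1))$), where the relative error $r_n(y_1)$ is $o(1/\sqrt n)$ (resp. $o(1)$) \emph{for each fixed} $y_1$. What needs to be shown is that when $Y_1$ has law $P_{a_n}$, the random variable $R_n := r_n(Y_1)$ satisfies $P_{a_n}(|R_n| > \delta\sqrt n) \to 0$, i.e. $R_n = o_{P}(\sqrt n)$ in the moderate case, and the analogous $R_n' = o_P(1)$ in the fast case.

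First I would revisit the proof of the local results to extract the structure of the remainder term. The conditional density $p_{a_n}(y_1)$ is obtained from the ratio
\[
p_{a_n}(y_1) = p(y_1)\,\frac{\bar\pi_{n-1}^{a_n}\!\left(\frac{na_n - y_1 - (n-1)a_n}{s\sqrt{n-1}}\right)}{\bar\pi_n^{a_n}(0)}\cdot\frac{1}{s\sqrt{n-1}}\quad\text{(up to tilting factors)},
\]
and the Edgeworth expansion of Theorem \ref{3theorem1} is applied to numerator and denominator at the point $z = (m - y_1)/(s\sqrt{n-1})$. The remainder in the Edgeworth expansion is uniform in its argument, so the relative error in $p_{a_n}(y_1)/\pi^{a_n}(y_1)$ is controlled by a polynomial in $z$ (coming from the Hermite-type correction $z^3 - 3z$ and the $o(1/\sqrt n)$ term) together with the error incurred in expanding the tilt $m(t_1)$ around $m(t) = a_n$, where $t_1$ solves $m(t_1) = (na_n - y_1)/(n-1)$. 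All of these are explicit functions of $y_1$ through $z$ and through $y_1$ itself. Thus $R_n$ is, schematically, $O(P(z)/\sqrt n) + O(y_1^2/(ns^2)) + \dots$ for a fixed-degree polynomial $P$.

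Next, the decisive step: under the sampling $Y_1 \sim P_{a_n}$, I would show $Y_1$ is, with probability tending to one, of moderate size relative to $s\sqrt n$ — in fact of size $O(a_n)$ plus fluctuations of order $s$, which is exactly the content heuristically behind \eqref{democracy} and the tilted approximation. Concretely, since $p_{a_n}(y_1) \le \pi^{a_n}(y_1)(1 + o(1))$ uniformly (a one-sided bound suffices and follows from the same computation), we can dominate $P_{a_n}(Y_1 \in A)$ by $(1+o(1))\Pi^{a_n}(A)$ for any Borel set $A$; then Chebyshev under the tilted law $\Pi^{a_n}$, whose mean is $m(t) = a_n$ and variance $s^2$, gives $\Pi^{a_n}(|Y_1 - a_n| > \lambda_n s) \le \lambda_n^{-2}$. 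Choosing $\lambda_n \to \infty$ slowly, say $\lambda_n = \log n$, confines $Y_1$ to $[a_n - \lambda_n s,\ a_n + \lambda_n s]$ except on an event of vanishing probability. On this good event, $z = (a_n - Y_1)/(s\sqrt{n-1})$ is $O(\lambda_n/\sqrt n) = o(1)$, so the polynomial $P(z)$ is bounded, and every error term composing $R_n$ is $o(1/\sqrt n)$ uniformly over the good event — a fortiori $|R_n| \le \delta\sqrt n$ eventually. The fast-growth case is identical except that one compares $P_{a_n}$ to the measure with density $g_{a_n}$, uses condition \eqref{croissance de aen O(1)} to see that $a_n/(s\sqrt n)$ stays bounded (so $z$ is again $o(1)$ on the good event), and concludes $|R_n'| \le \delta$ eventually.

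The main obstacle I anticipate is the domination step: justifying that $P_{a_n}(Y_1 \in A) \lesssim \Pi^{a_n}(A)$ (resp. $\lesssim \int_A g_{a_n}$) \emph{uniformly enough} that it can be combined with a Chebyshev bound. The pointwise results give a ratio $\to 1$ for each $y_1$, but to integrate one needs either a uniform upper bound on $p_{a_n}(y_1)/\pi^{a_n}(y_1)$ valid for all $y_1$ (including the far tail $y_1 \gg a_n$, where the Edgeworth expansion degrades), or an a priori tail estimate on $P_{a_n}$ obtained directly from $p(y_1) \cdot (\text{conditional density of } S_2^n \text{ at } na_n - y_1) / p_{S_1^n}(na_n)$ via the local limit theorem for the remaining $n-1$ tilted summands. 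I would handle the far tail separately by a crude bound: for $y_1$ outside $[0, 2a_n]$, the factor $p(y_1) = \exp(-(g(y_1) - q(y_1)))$ decays superexponentially by \eqref{2.2}, while the conditional density of the rest is bounded by its maximum $O(1/(s\sqrt n))$, so the total mass $P_{a_n}(Y_1 > 2a_n)$ is itself $o(1)$ without needing the refined expansion there. Splicing the far-tail crude bound together with the refined behaviour on the good event $[a_n - \lambda_n s,\ a_n + \lambda_n s]$ and a routine bound on the intermediate range yields the claim.
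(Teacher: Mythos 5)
Your overall strategy coincides with the paper's: transfer the pointwise expansions to a random argument $Y_{1}\sim P_{a_{n}}$ by localizing $Y_{1}$ under $P_{a_{n}}$ and then rerun the proofs of Theorem \ref{pThm oint conditional density sous o(1)} and Theorem \ref{Thm point condit density sous O(1)} with $y_{1}$ replaced by $Y_{1}$. The paper, however, localizes in one line (Lemma \ref{LemmaOrderof Y}): by exchangeability $E\left(Y_{1}\mid S_{1}^{n}=na_{n}\right)=a_{n}$ exactly, so Markov's inequality gives $Y_{1}=O_{P_{a_{n}}}(a_{n})$, which already makes $z=(a_{n}-Y_{1})/(s\sqrt{n-1})$ of order $o_{P}(1)$ under (\ref{croissance de aen o(1)}) and $O_{P}(1)$ under (\ref{croissance de aen O(1)}) — all that the pointwise proofs require. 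Your finer localization ($Y_{1}\in a_{n}\pm s\log n$) via domination by $\Pi^{a_{n}}$ plus Chebyshev would also work, and the domination you flag as the main obstacle is in fact immediate from the paper's own ingredients: by the tilted Bayes representation (\ref{Bayes formula}), $p_{a_{n}}(y_{1})=\frac{\sqrt{n}}{\sqrt{n-1}}\pi^{a_{n}}(y_{1})\,\widetilde{\pi_{n-1}}(z)/\widetilde{\pi_{n}}(0)$, and Theorem \ref{3theorem1} is uniform in its argument, so $\sup_{z}\widetilde{\pi_{n-1}}(z)\leq\phi(0)(1+o(1))$ while $\widetilde{\pi_{n}}(0)=\phi(0)(1+o(1))$; hence $p_{a_{n}}\leq(1+o(1))\pi^{a_{n}}$ on all of $\mathbb{R}_{+}$, with no separate treatment of the far tail.

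The genuine gap is your fallback argument for the far tail. Writing $p_{a_{n}}(y_{1})=p(y_{1})\,p_{S_{2}^{n}}(na_{n}-y_{1})/p_{S_{1}^{n}}(na_{n})$, bounding the middle factor by its maximum $O(1/\sqrt{n})$, and invoking only the superexponential decay of $p(y_{1})$ for $y_{1}>2a_{n}$ cannot give $P_{a_{n}}(Y_{1}>2a_{n})=o(1)$: the denominator is exponentially small, $p_{S_{1}^{n}}(na_{n})\asymp\sqrt{n}\,e^{-nI(a_{n})}/s$ by Lemma \ref{JensenLemme}, and $nI(a_{n})$ is of order $n$ times a quantity comparable to $g(a_{n})$, which dwarfs $g(2a_{n})$. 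Concretely, in the Weibull Example \ref{EX2.1}, $I(a_{n})\sim a_{n}^{k}$ while $g(2a_{n})=2^{k}a_{n}^{k}$, so your bound on $P_{a_{n}}(Y_{1}>2a_{n})$ is of order $\frac{s}{n}\exp\left(nI(a_{n})-g(2a_{n})\right)$, which diverges as soon as $n>2^{k}$. The smallness of the conditional far tail must come from the large-deviation decay of $p_{S_{2}^{n}}(na_{n}-y_{1})$ at the relevant argument (equivalently, from tilting, as in the uniform bound above), not from the tail of $p(y_{1})$ alone; bounding that density by its global maximum discards precisely the factor needed to cancel $e^{nI(a_{n})}$. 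With that step replaced by the uniform tilted bound, or by the paper's Markov-inequality lemma, your argument goes through.
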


\begin{remark}
This result is of much greater relevance than the previous ones. Indeed under $P_{a_{n}}$ the r.v. $Y_{1}$ may take large values. On the contrary simple approximation of $p_{a_{n}}$ by $\pi^{a_{n}}$ or $g_{a_{n}}$ on $\mathbb{R}_{+}$ only provides some knowledge on $p_{a_{n}}$ on sets with smaller and smaller probability under $p_{a_{n}}$. Also it will be proved that as a consequence of the above result, the $L^{1}$ norm between $p_{a_{n}}$ and its approximation goes to $0$ as $n\rightarrow\infty$, a result out of reach through the aforementioned results.
\end{remark}

In order to adapt the proof of Theorem \ref{point conditional density e} to the present setting it is necessary to get some insight on the plausible values of $Y_{1}$ under $P_{a_{n}}.$ It holds

\begin{lemma}
\label{LemmaOrderof Y}It holds
\[
Y_{1}=O_{P_{a_{n}}}\left(a_{n}\right).
\]
\end{lemma}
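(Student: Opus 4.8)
The plan is to show that under the conditional law $P_{a_n}$ the generic variable $Y_1$ concentrates on the scale $a_n$, which is the scale dictated by the democracy phenomenon (\ref{democracy}) and by the fact that the tilted mean $m(t_n)=a_n$. First I would fix a large constant $K>1$ and estimate
\[
P_{a_n}\left(Y_1>Ka_n\right)=\int_{Ka_n}^{\infty}p_{a_n}(y_1)\,dy_1 .
\]
By the local result of Theorem \ref{Thm point condit density sous O(1)} (or, under (\ref{croissance de aen o(1)}), of Theorem \ref{pThm oint conditional density sous o(1)}), the integrand is comparable to $\pi^{a_n}(y_1)$, respectively to $g_{a_n}(y_1)$, up to a factor $1+o(1)$. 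Hence the probability in question is, up to $1+o(1)$, the tail mass beyond $Ka_n$ of the tilted density $\pi^{a_n}$ (or of $g_{a_n}$), and the task reduces to controlling that tail.

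The key step is then a tail estimate for $\pi^{a_n}$. Writing $\pi^{a_n}(x)=\exp\left(t_n x-\log\Phi(t_n)-g(x)+q(x)\right)$, and using that $g$ is convex with $g'(x)=h(x)\to\infty$, one sees that the exponent $t_n x-g(x)$ is maximized at $x=\psi(t_n)=\hat x$, which by Theorem \ref{THM3.1} satisfies $\hat x=m(t_n)(1+o(1))=a_n(1+o(1))$. Around that maximum a Laplace/saddle-point expansion gives a Gaussian profile with width $\hat\sigma=s(t_n)(1+o(1))$, so for $x=Ka_n$ with $K$ bounded away from $1$ the exponent is below its maximum by an amount of order $(K-1)^2 a_n^2/s^2(t_n)\to\infty$ (in the regime (\ref{croissance de aen O(1)}) this is of order $(K-1)^2 n\to\infty$; in the regime (\ref{croissance de aen o(1)}) it diverges even faster). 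Consequently $\int_{Ka_n}^\infty\pi^{a_n}(y_1)\,dy_1\to 0$ as $n\to\infty$, for every fixed $K>1$; the same computation applies verbatim to $g_{a_n}$ since $g_{a_n}$ differs from $\pi^{a_n}$ only through the extra Gaussian factor $\mathfrak n(\alpha\beta+a_n,\beta,\cdot)$, which only sharpens the decay. For the left tail one notes that on $[0,\varepsilon a_n]$ the density $p_{a_n}$ is likewise dominated by the corresponding tilted/Gaussian profile evaluated far to the left of its mode, so $P_{a_n}(Y_1<\varepsilon a_n)\to 0$; in fact the lower bound is not even needed for the $O_{P}$ statement, only the upper tail is. Combining, for every $\epsilon>0$ there is $K$ with $\limsup_n P_{a_n}(Y_1>Ka_n)\le\epsilon$, which is exactly $Y_1=O_{P_{a_n}}(a_n)$.

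The main obstacle is making the saddle-point tail bound uniform and rigorous with only the regularity hypotheses (\ref{2.3})--(\ref{2.6}) on $h$ and $q$, rather than on a single fixed $t$: the parameter $t_n$ drifts to $t^+=\infty$, so one must control $\log\Phi(t_n)$, the location $\hat x$, and the curvature simultaneously as $n\to\infty$. Here Theorem \ref{THM3.1} does the heavy lifting, pinning $m(t_n)\sim\psi(t_n)$, $s^2(t_n)\sim\psi'(t_n)$, and giving $\mu_3/s^3\to 0$ via Corollary \ref{3cor1}, so the Gaussian approximation of $\bar\pi^{a_n}$ is legitimate; what remains is to ensure the crude pointwise bound $\pi^{a_n}(x)\le \exp\left(-c\,(x-\hat x)^2/\hat\sigma^2\right)$ (up to constants) holds for $x$ in the whole far-right range $x\ge Ka_n$, not merely near the mode. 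This follows from convexity of $g-q$ in the upper tail — a consequence of $h\in\mathcal R$ being monotone increasing together with the subordinate growth condition on $q$ — which forces $t_n x-g(x)+q(x)$ to stay below a linear function with negative slope once $x$ exceeds, say, $2\hat x$, yielding even exponential (not merely Gaussian) decay there. Once this two-region estimate (Gaussian near $\hat x$, exponential far out) is in place, integrating gives the claim.
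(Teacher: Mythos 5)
Your route has a genuine gap at its first step. You integrate the local approximations of Theorems \ref{pThm oint conditional density sous o(1)} and \ref{Thm point condit density sous O(1)} over the unbounded region $\{y_1>Ka_n\}$, treating the factor $1+o(1)$ as uniform in $y_1$ there. Those statements are pointwise in $y_1$: in their proofs the relative error contains $R_n=\frac{\mu_3}{6s^3\sqrt{n-1}}(z^3-3z)$ with $z=(m-y_1)/(s\sqrt{n-1})$, and the Edgeworth remainder of Theorem \ref{3theorem1} is an \emph{additive} $o(1/\sqrt n)$, not a multiplicative one. In the far tail (e.g. $y_1$ of order $na_n$, which is in the support of $P_{a_n}$), $\phi(z)$ is superexponentially small and the additive remainder dominates, so $p_{a_n}(y_1)=\pi^{a_n}(y_1)\left(1+o(1)\right)$ cannot be asserted there; at best one can extract $p_{a_n}(y_1)\le C\,\pi^{a_n}(y_1)$ from the Bayes factorization (\ref{Bayes formula}) together with the uniform boundedness of $\widetilde{\pi}_{n-1}$, but that is an argument you would still have to supply, and it is not what you invoked. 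There is also a structural point: this lemma is precisely the ingredient the paper uses to upgrade the pointwise theorems to typical values $Y_1$ in the proof of Theorem \ref{ThmLocalProba}, so resting the lemma on a uniform-in-$y_1$ strengthening of those same theorems inverts the intended order of the argument. A minor further slip: under (\ref{croissance de aen o(1)}) the Gaussian drop $(K-1)^2a_n^2/s^2$ is $o(n)$, not larger than $n$; it still diverges (since $a_n/s\to\infty$ by Theorem \ref{THM3.1}), but not ``even faster'' than under (\ref{croissance de aen O(1)}).

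For comparison, the paper's proof is one line and needs no tail analysis of the tilted or approximating densities: the $X_i$ are nonnegative and exchangeable given $S_1^n=na_n$, hence $E\left(Y_1\mid S_1^n=na_n\right)=a_n$, and Markov's inequality gives $P_{a_n}\left(Y_1>Ka_n\right)\le 1/K$, which is exactly $Y_1=O_{P_{a_n}}(a_n)$. Your saddle-point strategy could be repaired (uniform upper bound on $p_{a_n}$ via the Bayes formula plus Theorem \ref{3theorem1}, then convexity of $g$ to get exponential decay beyond $2\hat{x}$), and it would then yield a sharper concentration statement, but for the lemma as stated it is far heavier than necessary and, as written, not yet a proof.
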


\begin{proof}
This is a consequence of Markov Inequality:
\[
P\left(\left.Y_{1}>u\right\vert S_{1}^{n}=na_{n}\right)\leq\frac{E\left(\left.Y_{1}\right\vert S_{1}^{n}=na_{n}\right)}{u}=\frac{a_{n}}{u}
\]
\bigskip which goes to $0$ for all $u=u_{n}$ such that $\lim_{n\rightarrow\infty}u_{n}/a_{n}=\infty$. 

Now making use of Lemma \ref{LemmaOrderof Y} in the proof of Theorem \ref{pThm oint conditional density sous o(1)} and Theorem \ref{Thm point condit density sous O(1)}, substituting $y_{1}$ with $Y_{1},$ completes the proof.
\end{proof}

Denote the probability measures $P_{a_{n}}$ , $\Pi^{a_{n}}$ and $G_{a_{n}}$ with respective densities $p_{a_{n}},\pi^{a_{n}}$ and $g_{a_{n}}$ .

\subsection{\label{SubsVariationNorm}Gibbs principle in variation norm}

We now consider the approximation of $P_{a_{n}}$ by $G_{a_{n}}$ in variation norm.

The main ingredient is the fact that in the present setting approximation of  $p_{a_{n}}$ by $g_{a_{n}}$ in probability plus some rate implies approximation of the corresponding measures in variation norm. This approach has been developped in \cite{Bronia}; we state a first lemma which states that wether two densities are equivalent in probability with small relative error when measured according to the first one, then the same holds under the sampling of the second.

Let $\mathfrak{R}_{n}$ and $\mathfrak{S}_{n}$ denote two p.m's on $\mathbb{R}^{n}$ with respective densities $\mathfrak{r}_{n}$ and $\mathfrak{s}_{n}.$

\begin{lemma}
\label{Lemma:commute_from_p_n_to_g_n} Suppose that for some sequence $\varpi_{n}$ which tends to $0$ as $n$ tends to infinity
\begin{equation}
\mathfrak{r}_{n}\left(Y_{1}^{n}\right)=\mathfrak{s}_{n}\left(Y_{1}^{n}\right)\left(1+o_{\mathfrak{R}_{n}}(1)\right)
\label{p_n equiv g_n under p_n}
\end{equation}
as $n$ tends to $\infty$. Then 
\begin{equation}
\mathfrak{s}_{n}\left(Y_{1}^{n}\right)=\mathfrak{r}_{n}\left(Y_{1}^{n}\right)\left(1+o_{\mathfrak{S}_{n}}(1)\right).
\label{g_n equiv p_n under g_n}
\end{equation}
\end{lemma}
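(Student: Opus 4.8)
The plan is to show that the event where $\mathfrak{r}_n$ and $\mathfrak{s}_n$ are \emph{not} comparable has small $\mathfrak{S}_n$-probability, by transferring the corresponding smallness from $\mathfrak{R}_n$ via the fact that on the "good" set the two densities are within a factor $1+o(1)$ of each other, and on the complement at least one has small mass. Concretely, fix $\delta>0$ and let $A_n:=\{y_1^n\in\mathbb{R}^n:\ |\mathfrak{r}_n(y_1^n)-\mathfrak{s}_n(y_1^n)|\le \varpi_n\,\mathfrak{s}_n(y_1^n)\}$ be a version of the good set extracted from hypothesis \eqref{p_n equiv g_n under p_n}; by that hypothesis $\mathfrak{R}_n(A_n^c)\to 0$. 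On $A_n$ one has $\mathfrak{r}_n \ge (1-\varpi_n)\mathfrak{s}_n$, hence
\[
\mathfrak{S}_n(A_n^c)=\int_{A_n^c}\mathfrak{s}_n \le \frac{1}{1-\varpi_n}\int_{A_n^c}\mathfrak{r}_n = \frac{\mathfrak{R}_n(A_n^c)}{1-\varpi_n}\longrightarrow 0 .
\]
This is the crux of the argument: the one-sided bound $\mathfrak{r}_n\ge(1-\varpi_n)\mathfrak{s}_n$ valid on $A_n$ lets us dominate the $\mathfrak{s}_n$-mass of \emph{any} subset of $A_n$ by its $\mathfrak{r}_n$-mass, and in particular handles $A_n^c$ after one notes $A_n^c\subseteq A_n^c$ trivially — the subtlety being that we need the bound on $A_n^c$, which we do \emph{not} have directly, so the inequality must instead be run as $\mathfrak{S}_n(A_n^c)=1-\mathfrak{S}_n(A_n)$ and $\mathfrak{S}_n(A_n)\ge (1-\varpi_n)\mathfrak{R}_n(A_n)\to 1$. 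Let me record that cleaner route below.

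Thus the key steps, in order, are: (1) from \eqref{p_n equiv g_n under p_n} pick a sequence $\varpi_n\downarrow 0$ and set $A_n:=\{|\mathfrak{r}_n/\mathfrak{s}_n-1|\le\varpi_n\}$ (on $\{\mathfrak{s}_n>0\}$, which carries full $\mathfrak{S}_n$-mass), so that $\mathfrak{R}_n(A_n^c)\to 0$; (2) on $A_n$, $\mathfrak{r}_n\ge(1-\varpi_n)\mathfrak{s}_n$, whence $\mathfrak{R}_n(A_n)=\int_{A_n}\mathfrak{r}_n\ge(1-\varpi_n)\int_{A_n}\mathfrak{s}_n=(1-\varpi_n)\mathfrak{S}_n(A_n)$, giving $\mathfrak{S}_n(A_n)\le \mathfrak{R}_n(A_n)/(1-\varpi_n)\le 1/(1-\varpi_n)$ and, more usefully, $\mathfrak{S}_n(A_n^c)=1-\mathfrak{S}_n(A_n)$; (3) to bound $\mathfrak{S}_n(A_n^c)$ from above, use instead that on $A_n$ we also have $\mathfrak{s}_n\le \mathfrak{r}_n/(1-\varpi_n)$, so $\mathfrak{S}_n(A_n)=\int_{A_n}\mathfrak{s}_n\le (1-\varpi_n)^{-1}\int_{A_n}\mathfrak{r}_n\le(1-\varpi_n)^{-1}$; this is vacuous, so the honest bound is the reverse: $\mathfrak{S}_n(A_n^c)\le$ ... — in fact the clean statement is $\mathfrak{S}_n(A_n^c)=\int_{A_n^c}\mathfrak{s}_n$ and on $A_n^c$ we lack control, so one genuinely argues $\mathfrak{S}_n(A_n)\ge (1-\varpi_n)^{-1}$ is wrong-signed and we must lower-bound $\mathfrak{S}_n(A_n)$: from $\mathfrak{r}_n\le(1+\varpi_n)\mathfrak{s}_n$ on $A_n$ we get $\mathfrak{R}_n(A_n)\le(1+\varpi_n)\mathfrak{S}_n(A_n)$, i.e. $\mathfrak{S}_n(A_n)\ge \mathfrak{R}_n(A_n)/(1+\varpi_n)=(1-\mathfrak{R}_n(A_n^c))/(1+\varpi_n)\to 1$; (4) therefore $\mathfrak{S}_n(A_n^c)\to 0$, and on $A_n$ we have $\mathfrak{s}_n=\mathfrak{r}_n(1+O(\varpi_n))$, which is precisely the assertion that $\mathfrak{s}_n(Y_1^n)=\mathfrak{r}_n(Y_1^n)(1+o_{\mathfrak{S}_n}(1))$, proving \eqref{g_n equiv p_n under g_n}.

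The main obstacle — really the only point requiring care — is the direction of the inequalities in step (3): one must lower-bound $\mathfrak{S}_n(A_n)$ using the upper bound $\mathfrak{r}_n\le(1+\varpi_n)\mathfrak{s}_n$ on $A_n$, i.e. use $\mathfrak{R}_n(A_n)=\int_{A_n}\mathfrak{r}_n\le(1+\varpi_n)\int_{A_n}\mathfrak{s}_n=(1+\varpi_n)\,\mathfrak{S}_n(A_n)$, so that $\mathfrak{S}_n(A_n)\ge \mathfrak{R}_n(A_n)/(1+\varpi_n)$; since $\mathfrak{R}_n(A_n)=1-\mathfrak{R}_n(A_n^c)\to 1$ and $\varpi_n\to 0$, this forces $\mathfrak{S}_n(A_n)\to 1$, hence $\mathfrak{S}_n(A_n^c)\to 0$. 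Everything else is a direct integral comparison; no appeal to the structure of the $X_i$, the tilting, or the Edgeworth expansion is needed — the lemma is a purely measure-theoretic transfer principle. I would also remark that the roles of $\mathfrak{r}_n$ and $\mathfrak{s}_n$ are symmetric \emph{a posteriori}: once both \eqref{p_n equiv g_n under p_n} and \eqref{g_n equiv p_n under g_n} hold one may pass freely between the two sampling schemes, which is exactly what is used subsequently to convert the in-probability Gibbs approximation of Theorem \ref{ThmLocalProba} into a variation-norm statement for $P_{a_n}$ versus $G_{a_n}$.
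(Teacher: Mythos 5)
Your argument is correct: the paper itself does not prove this lemma but simply refers to \cite{Bronia}, and your purely measure-theoretic transfer --- taking the good set $A_{n}=\{|\mathfrak{r}_{n}/\mathfrak{s}_{n}-1|\leq \varpi_{n}\}$ with $\mathfrak{R}_{n}(A_{n})\rightarrow 1$, lower-bounding $\mathfrak{S}_{n}(A_{n})\geq \mathfrak{R}_{n}(A_{n})/(1+\varpi_{n})\rightarrow 1$ from the pointwise bound $\mathfrak{r}_{n}\leq (1+\varpi_{n})\mathfrak{s}_{n}$ on $A_{n}$, and then reading off $\mathfrak{s}_{n}=\mathfrak{r}_{n}(1+O(\varpi_{n}))$ on $A_{n}$ --- is exactly the standard argument used in that reference, and no structure of the $X_{i}$, the tilting or the Edgeworth expansion is needed. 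The only needed revision is editorial: delete the false starts (the attempted bound of $\mathfrak{S}_{n}(A_{n}^{c})$ by $\mathfrak{R}_{n}(A_{n}^{c})$, which as you note is unavailable off $A_{n}$, and the ``vacuous'' detour in step (3)), keeping only the clean chain $\mathfrak{R}_{n}(A_{n})\rightarrow 1$, hence $\mathfrak{S}_{n}(A_{n})\rightarrow 1$, hence $o_{\mathfrak{S}_{n}}(1)$.
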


The proof of this result is available in \cite{Bronia}. Applying this Lemma to the present setting yields
\[
g_{a_{n}}\left( Y_{1}\right) =p_{a_{n}}\left( Y_{1}\right) \left(
1+o_{G_{a_{n}}}\left( 1/\sqrt{n}\right) \right)
\]
as $n\rightarrow\infty,$ which together with Theorem \ref{Thm point condit density sous O(1)} or Theorem \ref{pThm oint conditional density sous o(1)} implies 
\[
p_{a_{n}}\left(Y_{1}\right)=g_{a_{n}}\left(Y_{1}\right)\left(1+o_{P_{a_{n}}}\left(1/\sqrt{n}\right)\right)
\]
or
\[
p_{a_{n}}\left(Y_{1}\right)=\pi^{a_{n}}\left(Y_{1}\right)\left(1+o_{P_{a_{n}}}\left(1/\sqrt{n}\right)\right)
\]

This fact entails

\begin{theorem}
\label{ThmcvVarTot}Under (\ref{croissance de aen O(1)}) the total variation norm between $P_{a_{n}}$ and $G_{a_{n}}$ goes to $0$ as $n\rightarrow \infty$. When (\ref{croissance de aen o(1)}) holds then the total variation norm between $P_{a_{n}}$ and $\Pi^{a_{n}}$ goes to $0$ as $n\rightarrow\infty$.
\end{theorem}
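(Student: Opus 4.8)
The plan is to deduce the convergence of the variation norm from the in-probability equivalences recorded just above the statement, by means of the elementary identity valid for any two probability densities $\mathfrak{r}$ and $\mathfrak{s}$ on $\mathbb{R}$, with associated probability measures $\mathfrak{R}$ and $\mathfrak{S}$:
\[
\frac{1}{2}\int \left\vert \mathfrak{r}-\mathfrak{s}\right\vert \,d\lambda
=\int \left( \mathfrak{r}-\mathfrak{s}\right) ^{+}\,d\lambda
=\int_{\{\mathfrak{r}>0\}}\left( 1-\frac{\mathfrak{s}}{\mathfrak{r}}\right) ^{+}\mathfrak{r}\,d\lambda
=E_{\mathfrak{R}}\left[ \left( 1-\frac{\mathfrak{s}(Z)}{\mathfrak{r}(Z)}\right) ^{+}\right] ,\qquad Z\sim \mathfrak{R},
\]
where the first equality uses $\int (\mathfrak{r}-\mathfrak{s})\,d\lambda =0$ and the second uses that $(\mathfrak{r}-\mathfrak{s})^{+}$ vanishes on $\{\mathfrak{r}=0\}$ since $\mathfrak{s}\geq 0$. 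Applied with $\mathfrak{R}=P_{a_{n}}$, $\mathfrak{S}=G_{a_{n}}$ and $Z=Y_{1}$, this reads
\[
\left\Vert P_{a_{n}}-G_{a_{n}}\right\Vert _{TV}=E_{P_{a_{n}}}\left[ \left( 1-\frac{g_{a_{n}}(Y_{1})}{p_{a_{n}}(Y_{1})}\right) ^{+}\right] ,
\]
so that everything reduces to showing that this expectation tends to $0$, the analogous formula with $\pi ^{a_{n}}$ in place of $g_{a_{n}}$ covering the case under (\ref{croissance de aen o(1)}).

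First I would observe that $g_{a_{n}}(x)=Cp(x)\mathfrak{n}(\alpha \beta +a_{n},\beta ,x)$, and likewise $\pi ^{a_{n}}(x)=e^{tx}p(x)/\Phi (t)$, are strictly positive wherever $p$ is, in particular on the support of $P_{a_{n}}$, conditioning on $\{S_{1}^{n}=na_{n}\}$ not enlarging the support of $p$. Consequently the relative error $R_{n}^{\prime }:=p_{a_{n}}(Y_{1})/g_{a_{n}}(Y_{1})-1$ is well defined $P_{a_{n}}$-a.s.\ with $1+R_{n}^{\prime }>0$. By Theorem \ref{ThmLocalProba} under (\ref{croissance de aen O(1)}), equivalently by the consequence of Lemma \ref{Lemma:commute_from_p_n_to_g_n} recorded just before the statement, $R_{n}^{\prime }\rightarrow 0$ in $P_{a_{n}}$-probability, whence
\[
\left( 1-\frac{g_{a_{n}}(Y_{1})}{p_{a_{n}}(Y_{1})}\right) ^{+}=\left( 1-\frac{1}{1+R_{n}^{\prime }}\right) ^{+}=\frac{\left( R_{n}^{\prime }\right) ^{+}}{1+R_{n}^{\prime }}\longrightarrow 0
\]
in $P_{a_{n}}$-probability. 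Since this quantity is the positive part of $1$ minus the nonnegative number $g_{a_{n}}(Y_{1})/p_{a_{n}}(Y_{1})$, it is bounded by $1$; bounded convergence (or the elementary bound $E|Z_{n}|\leq \delta +P(|Z_{n}|>\delta )$ for $|Z_{n}|\leq 1$ and every $\delta >0$) then forces the expectation to $0$, i.e.\ $\left\Vert P_{a_{n}}-G_{a_{n}}\right\Vert _{TV}\rightarrow 0$. The case under (\ref{croissance de aen o(1)}) is obtained verbatim, replacing $g_{a_{n}}$ by $\pi ^{a_{n}}$ and invoking the first part of Theorem \ref{ThmLocalProba}, which gives $p_{a_{n}}(Y_{1})=\pi ^{a_{n}}(Y_{1})(1+R_{n})$ with $R_{n}\rightarrow 0$ in $P_{a_{n}}$-probability. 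In particular $\int |p_{a_{n}}-g_{a_{n}}|\,d\lambda =2\Vert P_{a_{n}}-G_{a_{n}}\Vert _{TV}\rightarrow 0$, the $L^{1}$ assertion of the preceding remark.

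There is no serious obstacle once Theorem \ref{ThmLocalProba} and Lemma \ref{Lemma:commute_from_p_n_to_g_n} are in hand; the single point that requires a little care is upgrading convergence in probability of the relative error to convergence of its integral. This is precisely why one works with the representation $E_{P_{a_{n}}}[(1-g_{a_{n}}(Y_{1})/p_{a_{n}}(Y_{1}))^{+}]$ of the variation norm rather than with, say, $E_{P_{a_{n}}}[(g_{a_{n}}(Y_{1})/p_{a_{n}}(Y_{1})-1)^{+}]$: the former integrand is automatically dominated by $1$, no matter how large $R_{n}^{\prime }$ becomes on the vanishing event where it fails to be small, so that no uniform integrability beyond this trivial bound is needed. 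All the analytic substance of the theorem thus lies in the pointwise-in-probability estimates established earlier.
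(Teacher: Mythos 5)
Your proof is correct, and it is genuinely more self-contained than what the paper does: the paper gives no in-text argument, deferring the proof to \cite{Bronia}, and its stated route first invokes Lemma \ref{Lemma:commute_from_p_n_to_g_n} to transfer the in-probability equivalence from sampling under $P_{a_{n}}$ to sampling under $G_{a_{n}}$ (resp. $\Pi^{a_{n}}$) before concluding. Your representation
\[
\tfrac{1}{2}\int\left\vert p_{a_{n}}-g_{a_{n}}\right\vert \,d\lambda
=E_{P_{a_{n}}}\Bigl[\bigl(1-g_{a_{n}}(Y_{1})/p_{a_{n}}(Y_{1})\bigr)^{+}\Bigr]
\]
shows that the one-sided statement of Theorem \ref{ThmLocalProba} (relative error $\rightarrow 0$ in $P_{a_{n}}$-probability) already suffices: the integrand is automatically in $[0,1]$, so convergence in probability upgrades to convergence of the expectation without any uniform integrability and without the reciprocal equivalence that the commuting lemma provides. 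What the paper's route buys in exchange is the two-sided control (equivalence under the approximating measure as well), which is of independent use but not needed for this theorem; what your route buys is a short, complete proof inside the paper rather than a citation. Two cosmetic points: whether one defines the variation norm with or without the factor $1/2$ is immaterial for the conclusion, and while the displayed statement of Theorem \ref{ThmLocalProba}(i) has the (presumably misprinted) threshold $\delta\sqrt{n}$, you correctly rely on the consequence recorded just before the theorem, $p_{a_{n}}(Y_{1})=\pi^{a_{n}}(Y_{1})\bigl(1+o_{P_{a_{n}}}(1/\sqrt{n})\bigr)$, so no gap arises.
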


The proof of this theorem is also provided in \cite{Bronia}.

\begin{remark}
This result is to be paralleled with Theorem 1.6 in Diaconis and Freedman \cite{Diaconis1} and Theorem 2.15 in Dembo and Zeitouni \cite{Dembo} which provide a rate for this convergence in the LDP range.
\end{remark}

\subsection{The asymptotic location of $X$ under the conditioned distribution}

\label{SectAsymptNormalTilted}This paragraph intends to provide some insight on the behaviour of $X_{1}$ under the condition $\left(S_{1}^{n}=na_{n}\right)$; this will be extended further on to the case when $\left(S_{1}^{n}\geq na_{n}\right)$ and to be considered in parallel with similar facts developped in \cite{Bronia} for larger values of $a_{n}$.

Let $\mathcal{X}_{t}$ be a r.v. with density $\pi^{a_{n}}$ where $m(t)=a_{n}$ and $a_{n}$ satisfies (\ref{croissance de aen o(1)}) or (\ref{croissance de aen O(1)}). Recall that $E\mathcal{X}_{t}=a_{n}$ and $Var\mathcal{X}_{t}=s^{2}$. We evaluate the moment generating function of the normalized variable $\left(\mathcal{X}_{t}-a_{n}\right)/s$. It holds
\[
\log E\lbrack\exp(\lambda\left(\mathcal{X}_{t}-a_{n}\right)/s)\rbrack=-\lambda a_{n}/s+\log\Phi\left(t+\frac{\lambda}{s}\right)-\log\Phi\left(t\right).
\]
A second order Taylor expansion in the above display yields
\[
\log E\lbrack\exp(\lambda\left(\mathcal{X}_{t}-a_{n}\right)/s)\rbrack=\frac{\lambda^{2}}{2}\frac{s^{2}\left(t+\frac{\theta\lambda}{s}\right)}{s^{2}}
\]
where $\theta=\theta(t,\lambda)\in\left(0,1\right)$. The proof of the following Lemma is deferred to the Section \ref{appendix}. It holds

\begin{lemma}
\label{Lemme s^2 self neglecting}Under the above hypotheses and notation, for any compact set $K,$
\[
\lim_{n\rightarrow\infty}\sup_{u\in K}\frac{s^{2}\left(t+\frac{u}{s}\right)}{s^{2}}=1.
\]
\end{lemma}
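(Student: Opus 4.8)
The plan is to reduce the statement to the regular variation of the variance function $s^2(\cdot)$ along the curve $t\mapsto m(t)=a_n$, and then to exploit that $s^2(t)\sim\psi'(t)$ from Theorem \ref{THM3.1}. First I would fix a compact $K=[-A,A]$ and a point $u\in K$, and write $t_u:=t+u/s$. Since $s=s(t)\to\infty$ only polynomially (or slower) while $t$ itself tends to $t^+=+\infty$, the increment $u/s$ is small relative to $t$; the key qualitative fact is that $t_u/t\to 1$ uniformly in $u\in K$. This is where the hypotheses on $\epsilon$ in (\ref{2.3}) or (\ref{2.4}) enter: one shows $s(t)/t\to 0$ (equivalently $s^2(t)=o(t^2)$), which follows because $s^2(t)\sim\psi'(t)$ and $\psi\in RV(1/(1+\beta))$ in the regular case or $\psi\in RV(0)$ in the rapid case, so in all cases $\psi'(t)=o(t^2)$. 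Hence $u/s=o(t)$ uniformly on $K$.

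Next I would use Theorem \ref{THM3.1} to replace $s^2$ by $\psi'$: it suffices to prove $\sup_{u\in K}\psi'(t_u)/\psi'(t)\to 1$. In the regularly varying case $\psi'\in RV(\rho-1)$ for the appropriate index $\rho=1/(1+\beta)$, so by the uniform convergence theorem for regularly varying functions (Bingham--Goldie--Teugels, cf. \cite{Bingham}), $\psi'(\lambda t)/\psi'(t)\to\lambda^{\rho-1}$ uniformly for $\lambda$ in compact subsets of $(0,\infty)$; taking $\lambda=t_u/t=1+u/(ts)=1+o(1)$ gives the ratio tending to $1^{\rho-1}=1$, uniformly over $u\in K$. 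In the rapidly varying case one argues directly from the Karamata representation (\ref{Karamata 2}): $\log\psi'(t_u)-\log\psi'(t)$ is controlled by an integral of $\epsilon(v)/v$ over $[t,t_u]$ plus a bounded correction, and since $\epsilon(v)=o(1)$ and the interval has length $o(t)$, this difference tends to $0$ uniformly on $K$. A slightly cleaner route in both cases is to use a first-order Taylor expansion $s^2(t_u)=s^2(t)+\frac{u}{s}\,\frac{d}{dt}s^2(t_\ast)$ and bound $\frac{d}{dt}s^2=\mu_3\sim\psi''$, invoking Corollary \ref{3cor1} which gives $\mu_3(t)/s^3(t)\to 0$; then $\frac{u}{s}\mu_3(t_\ast)/s^2(t)=u\cdot\mu_3(t_\ast)/s^3(t)\to 0$ once one checks $s^3(t_\ast)\sim s^3(t)$, closing a short bootstrap.

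The main obstacle is making the uniformity genuinely uniform over $K$ while the base point $t=t(n)$ is itself moving to infinity — this is a "moving target" uniform convergence statement rather than the textbook one. The cleanest way to handle it is to note that uniform convergence of $\psi'(\lambda t)/\psi'(t)$ on compact $\lambda$-sets holds uniformly in the base point as $t\to\infty$ (this is exactly the content of the uniform convergence theorem for $RV$ functions), so feeding in $\lambda=\lambda_n(u)=1+o(1)$ with $o(1)$ uniform over $u\in K$ is legitimate. One should also verify that for $n$ large $t_u$ stays inside the neighborhood $\mathcal{N}$ where all the functions are defined and that $s^2(t_u)$ remains positive, which is immediate since $t_u\to\infty$ and $s^2>0$ throughout $(0,t^+)$. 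Assembling these pieces:
\begin{equation}
\sup_{u\in K}\Big|\frac{s^2(t+u/s)}{s^2}-1\Big|\;\le\;\sup_{u\in K}\Big|\frac{s^2(t_u)}{\psi'(t_u)}\cdot\frac{\psi'(t_u)}{\psi'(t)}\cdot\frac{\psi'(t)}{s^2(t)}-1\Big|\;\longrightarrow\;0,\nonumber
\end{equation}
each of the three factors tending to $1$ (the outer two by Theorem \ref{THM3.1}, the middle one by the $RV$ uniform convergence argument above), which gives the claim.
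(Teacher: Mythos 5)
Your overall architecture mirrors the paper's proof: replace $s^{2}$ by $\psi^{\prime}$ via Theorem \ref{THM3.1}, show that the shift $u/s$ is negligible compared with $t$, then conclude by regular/slow variation. But the crux of the lemma is precisely that middle step, and your derivation of it fails. What is needed is $u/(ts)\to 0$ uniformly on $K$, i.e. $t\,s(t)\to\infty$; what you argue for is $s(t)/t\to 0$, a different statement which neither implies nor is implied by the one you need (an upper bound $s=o(t)$ gives no lower bound on $s$, hence no control of $u/s$). Your supporting claims are also incorrect: $\psi=h^{\leftarrow}\in RV(1/\beta)$, not $RV(1/(1+\beta))$, so $\psi^{\prime}\in RV(1/\beta-1)$ and the assertion that $\psi^{\prime}(t)=o(t^{2})$ ``in all cases'' is false for $\beta<1/3$; and the premise that $s(t)\to\infty$ is false in general — for $h\in RV(\beta)$ with $\beta>1$ (e.g. Weibull with $k>2$) and in the rapidly varying case one has $s^{2}(t)\sim\psi^{\prime}(t)\to 0$, so $u/s$ actually diverges, and the whole content of the lemma is that it diverges more slowly than $t$. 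The paper proves exactly this: in the regular case $1/s^{2}\sim h^{\prime}(\psi(t))\sim\beta t^{1-1/\beta}l_{1}(t)^{\beta-1}l_{0}(\psi(t))=o(t)$, hence $u/s=o(\sqrt{t})$; in the rapid case $1/(ts)\sim\left(t\psi(t)\epsilon(t)\right)^{-1/2}\to 0$ by (\ref{2.4}). Without such an estimate, the subsequent step — feeding $\lambda_{n}=1+u/(ts)\to 1$ into the uniform convergence theorem for $\psi^{\prime}$ — has no basis, so the proof as written has a genuine gap (a reparable one, since $ts(t)\to\infty$ does hold under the paper's hypotheses, but it must be proved along the paper's lines).

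Two smaller points. Your ``cleaner'' Taylor-expansion route is circular as stated: bounding $\frac{u}{s}\mu_{3}(t_{\ast})/s^{2}(t)$ through Corollary \ref{3cor1} requires $s(t_{\ast})\sim s(t)$, which is the statement being proved, and the announced bootstrap is not carried out. And in the rapid case the ratio $\epsilon(t+u/s)/\epsilon(t)\to 1$ coming from $\psi^{\prime}(t)=\psi(t)\epsilon(t)/t$ must be justified; the paper does this via a Taylor expansion using the derivative condition $t\epsilon^{\prime}(t)/\epsilon(t)\to 0$ in (\ref{2.4}) (or one may invoke $\epsilon\in RV(0)$ and uniform convergence), whereas your sketch only uses $\epsilon=o(1)$, which controls $\psi(t_{u})/\psi(t)$ but not this factor.
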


Applying the above Lemma it follows that the normalized r.v's $\left(\mathcal{X}_{t}-a_{n}\right)/s$ converge to a standard normal variable $N(0,1)$ in distribution, as $n\rightarrow\infty$. This amounts to say that 
\[
\mathcal{X}_{t}=a_{n}+sN(0,1)+o_{\Pi^{a_{n}}}(1).
\]
which implies that $\mathcal{X}_{t}$ concentrates around $a_{n}$ with rate $s$. Due to Theorem \ref{ThmcvVarTot} the same holds for $X_{1}$ under $\left(S_{1}^{n}=na_{n}\right)$.

\subsection{Conditional limit behaviour under other mean effect events}

Let $X_{1},\ldots,X_{n}$ denote $n$ i.i.d. real valued r.v's with distribution $P $ and density $p$ and let $f:\mathbb{R\rightarrow R}$ be a measurable function such that $\Phi_{f}(\lambda):=E\lbrack\exp(\lambda f(X_{1}))\rbrack$ is finite for $\lambda$ in a non void neighborhood of $0$ (the so-called Cramer condition). Denote $m_{f}(\lambda)$ and $s_{f}^{2}(\lambda)$ the first and second derivatives of $\log\Phi_{f}(\lambda)$. Assume that the r.v. $f(X_{1})$ has density $p_{f}$ on $\mathbb{R}$, and denote $p_{f}(f(X_{1})=u)$ its value at point $u.$

Denote 
\[
\pi_{f}^{a}(y)=\frac{\exp\lambda y}{\Phi_{f}(\lambda)}p_{f}(y)
\]
with $\lambda $ the unique solution of the equation $m_{f}(\lambda )=a$ for all $a$ in $Im(f(X_{1}))$ assuming that $\lambda\rightarrow\Phi_{f}(\lambda)$ is steep on its domain. Denote 
\[
F_{i}^{j}:=f(X_{i})+\ldots+f(X_{j})
\]
for $1\leq i\leq j\leq n$. We make use of the following equality
\begin{align*}
p(\left.X_{1}=x\right\vert F_{1}^{n}& =na_{n}) \\
& =\frac{p(X_{1}=x)}{p_{f}(f(X_{1})=f(x))}\times \\
& \left(p_{f}(f(X_{1})=f(x))\frac{p_{f}(\left. f(X_{1})=f(x)\right\vert F_{1}^{n}=na_{n})}{p_{f}(\left.f(X_{1})=f(x)\right\vert F_{2}^{n}=na_{n}-f(x))}\right).
\end{align*}
Note that for all $\alpha$ in $Im(f(X_{1}))$, denoting $\lambda$ the solution of $m_{f}(\lambda)=\alpha$ and defining 
\[
_{f}\pi^{\alpha}(x):=\frac{e^{\lambda f(x)}p(X=x)}{\int e^{\lambda f(x)}p(X=x)dx}
\]
it is readily checked that

\[
_{f}\pi^{\alpha}(x)=\frac{p(X_{1}=x)}{p_{f}(f(X_{1})=f(x))}\pi_{f}^{a_{n}}(f(x)).
\]
Denoting $P_{a_{n},f}$ the distribution of $X_{1}$ given $\left(F_{1}^{n}=na_{n}\right)$ it results, using Theorem \ref{ThmLocalProba} that the following Theorem holds.

\begin{theorem}
\label{ThmGibbsMoyenneGenerale}Assume that, with $s$ substituted by $s_{f}$, condition (\ref{croissance de aen o(1)}) holds. Then 
\[
p(\left.X_{1}=Z\right\vert F_{1}^{n}=na_{n})=_{f}\pi^{\alpha}(Z)\left(1+o_{P_{a_{n},f}}\left(1/\sqrt{n}\right)\right) 
\]
and under (\ref{croissance de aen O(1)})
\begin{eqnarray*}
p(\left.X_{1}=Z\right\vert F_{1}^{n} &=&na_{n}) \\
&=&C\frac{p(X_{1}=Z)}{p_{f}(f(X_{1})=f(Z))}\mathfrak{n}\left(\alpha\beta+a_{n},\beta,f(Z)\right)(1+o_{P_{a_{n},f}}\left(1/\sqrt{n}\right))
\end{eqnarray*}
where $\alpha :=\alpha_{n}$ and $\beta:=\beta_{n}$ are defined in (\ref{alfa_i}) and (\ref{beta_i}) with $m$ and $s$ substituted by $m_{f}$ and $s_{f}$.
\end{theorem}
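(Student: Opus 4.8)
The strategy is to reduce the statement about the conditional law of $X_{1}$ given $\left(F_{1}^{n}=na_{n}\right)$ to the already-proved local Gibbs results for sums, Theorem \ref{pThm oint conditional density sous o(1)} and Theorem \ref{Thm point condit density sous O(1)}, together with their strengthened probabilistic versions in Theorem \ref{ThmLocalProba}. The key observation is that $F_{1}^{n}=\sum_{i=1}^{n}f(X_{i})$ is itself a sum of i.i.d.\ real valued r.v's, namely the $f(X_{i})$, whose common distribution $P_{f}$ has density $p_{f}$ and moment generating function $\Phi_{f}$ finite near $0$. Thus, \emph{provided} $p_{f}$ satisfies the structural hypotheses of Section \ref{notation} (the representation (\ref{2.1}) with a regularly or rapidly varying $h$ and a bounded, suitably regularly varying $q$) and the growth condition (\ref{croissance de aen o(1)}) or (\ref{croissance de aen O(1)}) holds with $s$ replaced by $s_{f}$, all the previously established results apply verbatim to the sequence $f(X_{1}),\ldots,f(X_{n})$ and the event $\left(F_{1}^{n}=na_{n}\right)$.

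The steps I would carry out, in order, are the following. First, apply Theorem \ref{ThmLocalProba} to the i.i.d.\ sample $f(X_{1}),\ldots,f(X_{n})$: under (\ref{croissance de aen o(1)}) (with $s_{f}$) this gives, for $U_{1}$ a r.v.\ with the conditional density $p_{f,a_{n}}$ of $f(X_{1})$ given $\left(F_{1}^{n}=na_{n}\right)$,
\[
p_{f,a_{n}}(U_{1})=\pi_{f}^{a_{n}}(U_{1})\left(1+R_{n}\right),\qquad P_{f,a_{n}}\!\left(|R_{n}|>\delta\sqrt{n}\right)\rightarrow 0,
\]
and under (\ref{croissance de aen O(1)}) the analogous statement with the normal-product density $g_{f,a_{n}}(u)=C\,p_{f}(u)\,\mathfrak{n}\left(\alpha\beta+a_{n},\beta,u\right)$. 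Second, use the elementary disintegration identity displayed just before the statement,
\[
p(X_{1}=x\mid F_{1}^{n}=na_{n})=\frac{p(X_{1}=x)}{p_{f}(f(X_{1})=f(x))}\times\Big(p_{f}(f(X_{1})=f(x))\frac{p_{f}(f(X_{1})=f(x)\mid F_{1}^{n}=na_{n})}{p_{f}(f(X_{1})=f(x)\mid F_{2}^{n}=na_{n}-f(x))}\Big),
\]
which rewrites $p(X_{1}=x\mid F_{1}^{n}=na_{n})$ as the Radon--Nikodym-type factor $p(X_{1}=x)/p_{f}(f(X_{1})=f(x))$ times the conditional density of $f(X_{1})$ given $\left(F_{1}^{n}=na_{n}\right)$ evaluated at $f(x)$ (after the self-correcting ratio of the last two terms has been absorbed exactly as in the proof of Theorem \ref{point conditional density e}). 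Third, substitute $x$ by $Z$, a r.v.\ with law $P_{a_{n},f}$; since $f(Z)$ then has law $P_{f,a_{n}}$, plug the result of step one into step two. Fourth, identify the limit: in the slow-growth case the identity ${}_{f}\pi^{\alpha}(x)=\frac{p(X_{1}=x)}{p_{f}(f(X_{1})=f(x))}\pi_{f}^{a_{n}}(f(x))$ recorded above turns $\frac{p(X_{1}=Z)}{p_{f}(f(X_{1})=f(Z))}\pi_{f}^{a_{n}}(f(Z))(1+R_{n})$ into ${}_{f}\pi^{\alpha}(Z)(1+o_{P_{a_{n},f}}(1/\sqrt{n}))$; in the fast-growth case the same manipulation produces $C\frac{p(X_{1}=Z)}{p_{f}(f(X_{1})=f(Z))}\mathfrak{n}\left(\alpha\beta+a_{n},\beta,f(Z)\right)$ with the error controlled through the corresponding bound on $R_{n}'$.

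The main obstacle is \emph{step two}: controlling the ratio $p_{f}(f(X_{1})=f(x)\mid F_{1}^{n}=na_{n})/p_{f}(f(X_{1})=f(x)\mid F_{2}^{n}=na_{n}-f(x))$ uniformly over the plausible range of $f(Z)$ under $P_{a_{n},f}$. This is precisely the mechanism already exploited in the proof of Theorem \ref{point conditional density e} and Theorem \ref{ThmLocalProba}: one writes the numerator via the local limit theorem for the array generated by the tilted density $\bar{\pi}_{f}^{a_{n}}$ (Theorem \ref{3theorem1}, applied with $p_{f}$ in place of $p$), writes the denominator as the same object with $n-1$ in place of $n$ and shifted argument, and shows the two Edgeworth expansions differ only by the required multiplicative $1+o(1/\sqrt{n})$, using Lemma \ref{LemmaOrderof Y} (which gives $f(Z)=O_{P_{a_{n},f}}(a_{n})$) to keep the shift negligible. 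A secondary point requiring care is that the structural hypotheses of Section \ref{notation} are imposed on $p$, not directly on $p_{f}$; the statement implicitly restricts to functions $f$ for which $p_{f}$ again lies in the admissible class $\mathcal{R}$ with a bounded nearly-log-concave perturbation, so one should either state this as a standing assumption on $f$ or verify it is preserved under the monotone smooth transformations $f$ one has in mind. Once these regularity matters are in place, the rest is the bookkeeping of substituting $(m_{f},s_{f},\Phi_{f})$ for $(m,s,\Phi)$ throughout, which is routine.
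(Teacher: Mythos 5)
Your proposal matches the paper's own argument: the paper proves this theorem exactly by applying Theorem \ref{ThmLocalProba} to the i.i.d.\ sample $f(X_{1}),\ldots,f(X_{n})$ and transferring the result through the displayed disintegration identity and the relation ${}_{f}\pi^{\alpha}(x)=\frac{p(X_{1}=x)}{p_{f}(f(X_{1})=f(x))}\pi_{f}^{a_{n}}(f(x))$, with the regularity of $p_{f}$ (your "secondary point") left as the implicit standing assumption on $f$. The only simplification you could make is that the ratio you flag as the main obstacle needs no separate Edgeworth control: since $f(X_{1})$ is independent of $F_{2}^{n}$, the denominator $p_{f}\left(\left.f(X_{1})=f(x)\right\vert F_{2}^{n}=na_{n}-f(x)\right)$ equals $p_{f}(f(X_{1})=f(x))$, so the bracketed factor is exactly the conditional density of $f(X_{1})$ given $\left(F_{1}^{n}=na_{n}\right)$, which is precisely what Theorem \ref{ThmLocalProba} approximates.
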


\begin{remark}
The first part of the above Theorem extends the classical Gibbs Principle under condition (\ref{croissance de aen o(1)}), which, for fixed $a=a_{n}$ writes 
\[
p(\left.X_{1}=x\right\vert F_{1}^{n}=na)=_{f}\pi^{\alpha}(x)\left(1+o\left(1/\sqrt{n}\right)\right)
\]
for any fixed $x$. See \cite{Diaconis1}. This statement does not hold any
longer under condition (\ref{croissance de aen O(1)}).
\end{remark}

\begin{remark}
Making use of the same arguments as in Subsection \ref{SubsVariationNorm} it follows that Theorem \ref{ThmGibbsMoyenneGenerale} yields that the variation distance between the conditional distribution and its approximation tends to $0$ as $n$ tends to infinity.
\end{remark}

\begin{example}
Consider for example the application of the above result to r.v's $Y_{1},\ldots,Y_{n}$ with $Y_{i}:=\left(X_{i}\right)^{2}$ where the $X_{i}^{\prime}s$ are i.i.d. and are such that the density of the i.i.d. r.v's $Y_{i}^{\prime}s$ satisfy (\ref{2.1}), where $h\in R_{\beta}\cup R_{\infty}$ with $\beta>1$. By the Gibbs conditional principle, for \textit{fixed} $a$, conditionally on $\left(\sum_{i=1}^{n}Y_{i}=na\right)$ the generic r.v. $Y_{1}$ has a non degenerate limit distribution 
\[
p_{Y}^{\ast}(y):=\frac{\exp ty}{E\exp tY_{1}}p_{Y}(y)
\]
and the limit density of $X_{1}$ under $\left(\sum_{i=1}^{n}X_{i}^{2}=na\right)$ is

\[
p_{X}^{\ast}(y):=\frac{\exp tx^{2}}{E\exp tX_{1}^{2}}p_{X}(y)
\]
whereas, when $a_{n}\rightarrow\infty$, $Y_{1}$'s the limit conditional distribution is degenerate and concentrates around $a_{n}.$ As a consequence the distribution of $X_{1}$ under the condition $\left(\sum_{i=1}^{n}X_{i}^{2}=na_{n}\right)$ concentrates sharply at $-\sqrt{a_{n}}$ and $+\sqrt{a_{n}}$.
\end{example}

\section{EDP under exceedance}\label{secExceedance}

The following proposition states the marginally conditional density under condition $A_{n}=\{S_{1}^{n}\geq na_{n}\}$. We denote this density by $p_{A_{n}}$ to differentiate it from $p_{a_{n}}$ which is under condition $\{S_{1}^{n}=na_{n}\}$. For the purpose of the proof, we need the following Lemma, based on Theorem $6.2.1$ of Jensen \cite{Jensen} in order to provide the asymptotic estimation of the tail probability $P(S_{1}^{n}\geq na_{n})$ and of the $n$-convolution density $p(S_{1}^{n}/n=u)$ for $u>a_{n}$.

Define 
\[
I(x):=xm^{-1}(x)-\log\Phi\left( m^{-1}(x)\right).
\]

We make use of the following result (see Section \ref{appendix} for the proof).

\begin{lemma}
\label{JensenLemme}Set $m(t)=a_{n}$. Suppose that $a_{n}\rightarrow\infty$ as $n\rightarrow\infty.$ Then it holds 
\begin{equation}
P(S_{1}^{n}\geq na_{n})=\frac{\exp(-nI(a_{n}))}{\sqrt{2\pi}\sqrt{n}ts(t)}\left(1+o\left(\frac{1}{\sqrt{n}}\right)\right).  \label{Hoglund}
\end{equation}
Let further $t_{\tau}$ be defined by $m(t_{\tau})=\tau$ with $\tau\geq a_{n}$, it then holds, uniformly upon $\tau$ 
\begin{equation}
p(S_{1}^{n}=n\tau)=\frac{\sqrt{n}\exp(-nI(\tau))}{\sqrt{2\pi}s(t_{\tau})}\left(1+o\left(\frac{1}{\sqrt{n}}\right)\right).
\label{Hoglund density}
\end{equation}
\end{lemma}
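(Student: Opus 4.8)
The plan is to reduce both assertions to a uniform saddlepoint (Bahadur--Rao / Jensen) expansion for the tail and the density of $S_1^n$, applied along the moving tilting parameter $t=t_{a_n}$, and then to feed in the asymptotic expansions for $m,s^2,\mu_3$ from Theorem \ref{THM3.1} together with the triangular-array Edgeworth expansion of Theorem \ref{3theorem1}. First I would treat the density statement \eqref{Hoglund density}. Tilting the i.i.d.\ sample at $t_\tau$ with $m(t_\tau)=\tau$ gives the exact identity
\[
p(S_1^n=n\tau)=\exp\bigl(-n(t_\tau\tau-\log\Phi(t_\tau))\bigr)\,\bar\pi_n^{\tau}(0)
=\exp(-nI(\tau))\,\frac{1}{\sqrt n\,s(t_\tau)}\,\rho_n(0),
\]
where $\rho_n$ is exactly the normalized $n$-fold convolution density studied in Section 3 (with $a_n$ there replaced by $\tau$). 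Evaluating Theorem \ref{3theorem1} at $x=0$ kills the Hermite term $x^3-3x$, so $\rho_n(0)=\phi(0)(1+o(1/\sqrt n))=\tfrac{1}{\sqrt{2\pi}}(1+o(1/\sqrt n))$, which yields \eqref{Hoglund density}. The uniformity in $\tau\ge a_n$ is inherited from the uniformity in $x$ in Theorem \ref{3theorem1}, provided one checks that the regularity hypotheses feeding that theorem (the conditions on $h$, $q$, and the conclusions of Theorem \ref{THM3.1}, in particular $\mu_3/s^3\to0$ from Corollary \ref{3cor1}) hold uniformly along the whole family of tilts indexed by $\tau\ge a_n$; this is the point where I would be most careful.

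For the tail statement \eqref{Hoglund}, the natural route is to write $P(S_1^n\ge na_n)=\int_{a_n}^{\infty}p(S_1^n=n\tau)\,n\,d\tau$ and substitute the density estimate just obtained:
\[
P(S_1^n\ge na_n)=\frac{n}{\sqrt{2\pi}}\int_{a_n}^{\infty}\frac{\exp(-nI(\tau))}{s(t_\tau)}\bigl(1+o(1/\sqrt n)\bigr)\,d\tau .
\]
Since $I'(\tau)=m^{-1}(\tau)=t_\tau$ and $I$ is convex increasing, I would perform the Laplace-type change of variable or simply integrate by parts, using that near $\tau=a_n$ one has $I(\tau)\approx I(a_n)+t(\tau-a_n)$ with $t=t_{a_n}$, and that $s(t_\tau)$ varies slowly compared with the exponential. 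The leading contribution comes from a neighbourhood of $\tau=a_n$ of width $O(1/(nt))$, giving $\int_{a_n}^{\infty}e^{-nI(\tau)}s(t_\tau)^{-1}d\tau\sim e^{-nI(a_n)}/(n t s(t))$; multiplying by $n/\sqrt{2\pi}$ produces the claimed $\exp(-nI(a_n))/(\sqrt{2\pi}\sqrt n\, t s(t))$. One must check that the $o(1/\sqrt n)$ error term, being uniform in $\tau$, integrates against the same exponential to give a relative error still $o(1/\sqrt n)$, and that the tail of the integral over $\tau$ far from $a_n$ is exponentially negligible by convexity of $I$; this is where invoking Theorem $6.2.1$ of Jensen \cite{Jensen} directly, rather than re-deriving the Laplace estimate, streamlines the argument.

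The main obstacle I anticipate is not the formal saddlepoint computation but the \emph{uniformity} in the moving parameter: Jensen's theorem and the classical Bahadur--Rao estimate are stated for fixed distributions, whereas here the tilted law $\pi^{\tau}$ changes with $n$ and its tilting parameter $t_\tau\to\infty$. Controlling this requires exactly the machinery built in Section 3 and in \cite{BirBroCao}: one needs the characteristic function of $\bar\pi^{\tau}$ to satisfy a Cram\'er-type non-lattice bound uniformly over $\tau\ge a_n$, and the third cumulant ratio $\mu_3/s^3$ to go to $0$ uniformly. Granting the hypotheses of Theorem \ref{3theorem1} are designed precisely so that these uniform controls hold (log-concavity or near log-concavity of $p$, plus the $RV$ conditions on $h$ and $q$), the remaining steps are the routine Laplace-integral manipulations sketched above, and the two displayed formulas follow.
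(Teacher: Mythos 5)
Your overall strategy (exponential tilting at the moving saddlepoint plus an Edgeworth-type correction for the tilted triangular array) is the same as the paper's, but the execution differs. The paper does not re-derive anything: it first checks that $g$ is ultimately convex in both cases $h\in RV(\beta)$ and $h$ rapidly varying (so that the hypotheses of Jensen's Theorem 6.2.1 hold for the moving tilt), then quotes Jensen's formulas (2.2.6) and (2.2.4) directly, obtaining \eqref{Hoglund} and \eqref{Hoglund density} once the correction terms are controlled; the whole analytic work consists in showing $\lambda_{n}^{2}=n\,t^{2}s^{2}(t)$ satisfies $1/\lambda_{n}^{2}=o(1/n)$, i.e. $t^{2}s^{2}(t)\to\infty$ (verified separately in the regularly and rapidly varying cases via Theorem \ref{THM3.1}), and in invoking Corollary \ref{3cor1} for $\mu_{3}/s^{3}\to0$. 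You instead rebuild the density estimate from the exact tilting identity and Theorem \ref{3theorem1} evaluated at $x=0$, and then recover the tail by integrating the density with a Laplace argument based on the convexity of $I$; this is a legitimate alternative and has the merit of staying inside the paper's own machinery, at the price of having to establish yourself the uniformity in $\tau\ge a_{n}$ of the Edgeworth expansion (which the paper gets for free from the uniform statement in \cite{Jensen}).

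Two concrete points need repair. First, your Laplace step silently uses exactly what the paper proves and you omit: the localization of $\int_{a_{n}}^{\infty}e^{-nI(\tau)}s(t_{\tau})^{-1}d\tau$ to a window of width $O(1/(nt))$ and the claim that the relative error is still $o(1/\sqrt n)$ both require $\sqrt{n}\,t\,s(t)\to\infty$, indeed $t^{2}s^{2}(t)\to\infty$; this is not automatic and must be checked in the two regimes (in the regularly varying case $t^{2}s^{2}(t)\sim a_{n}h(a_{n})l_{1}(a_{n})^{2}/(\beta+o(1))$, in the rapidly varying case $t^{2}s^{2}(t)\sim t\psi(t)\epsilon(t)$), exactly as the paper does; without it the quadratic term in $I(\tau)-I(a_{n})-t(\tau-a_{n})$ is not negligible at the stated precision. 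Second, your normalization bookkeeping is inconsistent: the identity you write gives $p(S_{1}^{n}=n\tau)=e^{-nI(\tau)}\rho_{n}(0)/(\sqrt{n}\,s(t_{\tau}))$, i.e. $e^{-nI(\tau)}/(\sqrt{2\pi n}\,s(t_{\tau}))$ up to $1+o(1/\sqrt n)$, which differs by a factor $n$ from the display \eqref{Hoglund density} (whose $\sqrt n$ in the numerator corresponds to the density of $S_{1}^{n}/n$, as the last line of the paper's proof makes clear), and your subsequent prefactor $n/\sqrt{2\pi}$ in the tail integral is not consistent with either convention — carried through as written it lands on $e^{-nI(a_{n})}/(\sqrt{2\pi}\,t\,s(t))$, off by $\sqrt n$ from \eqref{Hoglund}. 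The correct chain is
\begin{equation*}
P(S_{1}^{n}\geq na_{n})=n\int_{a_{n}}^{\infty}p(S_{1}^{n}=n\tau)\,d\tau
=\frac{\sqrt{n}}{\sqrt{2\pi}}\int_{a_{n}}^{\infty}\frac{e^{-nI(\tau)}}{s(t_{\tau})}\bigl(1+o(1/\sqrt{n})\bigr)d\tau
\sim\frac{e^{-nI(a_{n})}}{\sqrt{2\pi n}\,t\,s(t)},
\end{equation*}
so the discrepancy is fixable, but as submitted the constants do not close, and you should also state explicitly (as you half do) on what grounds the $o(1/\sqrt n)$ in Theorem \ref{3theorem1} is uniform over the family of tilts indexed by $\tau\geq a_{n}$.
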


The proof of this lemma is postponed to the Section \ref{appendix}.

\begin{theorem}
\label{main theorem}Let $X_{1},\ldots,X_{n}$ be i.i.d. random variables with density $p(x)$ defined in (\ref{2.1}) and $h(x)\in\mathcal{R}$. Set $m(t)=a_{n}$ let $\eta_{n}$ be a positive sequence satisfying 
\[
\eta_{n}\longrightarrow 0\qquad\emph{and}\qquad nm^{-1}(a_{n})\eta_{n}\longrightarrow \infty.
\]
(i) When (\ref{croissance de aen o(1)}) holds 
\[
p_{A_{n}}(y_{1})=p(X_{1}=y_{1}|S_{1}^{n}\geq na_{n})=\pi_{A_{n}}(y_{1})\left(1+o\left(\frac{1}{\sqrt{n}}\right)\right),
\]
with 
\[
\pi_{A_{n}}(y_{1})=ts(t)e^{nI(a_{n})}\int_{a_{n}}^{a_{n}+\eta_{n}}\pi_{\tau }(y_{1})\exp \left(-nI(\tau)-\log s(t_{\tau})\right)d\tau
\]
with $t_{\tau}$ defined by $m(t_{\tau})=\tau$.

(ii) When (\ref{croissance de aen O(1)}) holds 
\[
p_{A_{n}}(y_{1})=p(X_{1}=y_{1}|S_{1}^{n}\geq na_{n})=g_{A_{n}}(y_{1})\Big(1+o\big(\frac{1}{\sqrt{n}}\big)\Big),
\]
with 
\[
g_{A_{n}}(y_{1})=ts(t)e^{nI(a_{n})}\int_{a_{n}}^{a_{n}+\eta_{n}}g_{\tau}(y_{1})\exp \left(-nI(\tau)-\log s(t_{\tau})\right)d\tau,
\]
where $g_{\tau}=\pi^{\tau}$ with $t_{\tau}$ defined by $m(t_{\tau})=\tau$.
\end{theorem}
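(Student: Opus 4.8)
The plan is to reduce the exceedance event $A_n=\{S_1^n\ge na_n\}$ to the local events $\{S_1^n=n\tau\}$ for $\tau\ge a_n$ by disintegration, and then to apply the already established local Gibbs principles (Theorem \ref{pThm oint conditional density sous o(1)} and Theorem \ref{Thm point condit density sous O(1)}) together with the tail and convolution-density estimates of Lemma \ref{JensenLemme}. Concretely, I would write, for the marginal conditional density of $X_1$,
\[
p_{A_n}(y_1)=\frac{\int_{a_n}^{\infty}p(X_1=y_1\mid S_1^n=n\tau)\,p(S_1^n=n\tau)\,d\tau}{\int_{a_n}^{\infty}p(S_1^n=n\tau)\,d\tau}
=\frac{\int_{a_n}^{\infty}p(X_1=y_1\mid S_1^n=n\tau)\,p(S_1^n=n\tau)\,d\tau}{P(S_1^n\ge na_n)}.
\]
The first step is therefore to justify this integral (Fubini/disintegration) representation rigorously, which is routine since $p$ is a density and the conditional densities are well defined.

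Second, I would show that the integral over $\tau\in[a_n,\infty)$ can be truncated to $\tau\in[a_n,a_n+\eta_n]$ at negligible cost, using the sequence $\eta_n$ with $\eta_n\to0$ and $nm^{-1}(a_n)\eta_n\to\infty$. The exponent $nI(\tau)$ is increasing in $\tau$ with derivative $nI'(\tau)=nm^{-1}(\tau)\sim nm^{-1}(a_n)$ near $a_n$, so $\exp(-nI(\tau))$ decays on the scale $\tau-a_n\sim 1/(nm^{-1}(a_n))$; the condition $nm^{-1}(a_n)\eta_n\to\infty$ guarantees that the tail $\tau>a_n+\eta_n$ contributes a relatively exponentially small amount, while $\eta_n\to0$ keeps $\tau$ in a shrinking neighbourhood of $a_n$ where all the asymptotic equivalences of Theorem \ref{THM3.1} and the local Gibbs theorems are uniform. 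On this shrinking window I would substitute the convolution density estimate (\ref{Hoglund density}), $p(S_1^n=n\tau)=\frac{\sqrt n\exp(-nI(\tau))}{\sqrt{2\pi}s(t_\tau)}(1+o(1/\sqrt n))$, and the tail estimate (\ref{Hoglund}) for $P(S_1^n\ge na_n)=\frac{\exp(-nI(a_n))}{\sqrt{2\pi}\sqrt n\,t\,s(t)}(1+o(1/\sqrt n))$ into numerator and denominator. The $\sqrt{2\pi}$ and $\sqrt n$ factors cancel, leaving the prefactor $ts(t)e^{nI(a_n)}$ in front of the integral, exactly as in the stated formulas for $\pi_{A_n}$ and $g_{A_n}$.

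Third, inside the integrand I replace $p(X_1=y_1\mid S_1^n=n\tau)$ by its local approximation: under (\ref{croissance de aen o(1)}) by $\pi^\tau(y_1)(1+o(1/\sqrt n))$ via Theorem \ref{pThm oint conditional density sous o(1)}, and under (\ref{croissance de aen O(1)}) by $g_\tau(y_1)(1+o(1/\sqrt n))$ via Theorem \ref{Thm point condit density sous O(1)}. One must check that these approximations hold \emph{uniformly} for $\tau\in[a_n,a_n+\eta_n]$: since $\eta_n\to0$ the whole window sits at the same growth order as $a_n$, so $a_n/(s(t_\tau)\sqrt n)$ stays in the regime (\ref{croissance de aen o(1)}) or (\ref{croissance de aen O(1)}) respectively, and the error terms in those theorems, which come from Theorem \ref{3theorem1} and Theorem \ref{THM3.1}, are controlled by $\mu_3/(s^3\sqrt n)\to0$ uniformly on this window. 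Pulling the common $(1+o(1/\sqrt n))$ factor out of the integral then yields precisely $p_{A_n}(y_1)=\pi_{A_n}(y_1)(1+o(1/\sqrt n))$ and $p_{A_n}(y_1)=g_{A_n}(y_1)(1+o(1/\sqrt n))$ with $\pi_{A_n},g_{A_n}$ as displayed, noting $g_\tau=\pi^\tau$ since in case (ii) the normal factor is absorbed consistently with the definition of $g_{a_n}$.

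The main obstacle I anticipate is the uniformity of the truncation-and-substitution step: one has to verify that the $o(\cdot)$ terms in Lemma \ref{JensenLemme}, in Theorem \ref{3theorem1}, and in the local Gibbs theorems are genuinely uniform over the shrinking window $[a_n,a_n+\eta_n]$, and that the interplay between the window width $\eta_n$, the large-deviation rate $nI'\sim nm^{-1}(a_n)$, and the regular/rapid variation of $s(t_\tau)$ does not spoil the cancellation of the $1/\sqrt n$ errors. This requires a careful Laplace-type estimate of $\int_{a_n}^{a_n+\eta_n}\exp(-n(I(\tau)-I(a_n)))(1+o(1/\sqrt n))\,d\tau$, controlling the $o$-term against the exponentially decaying kernel; the regularity hypotheses (\ref{2.3})--(\ref{2.6}) on $h$ and $q$, which make $I$ and $s$ smooth regularly/rapidly varying, are exactly what is needed to push this through, but the bookkeeping is the delicate part.
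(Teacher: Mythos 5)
Your proposal follows essentially the same route as the paper's proof: disintegration of the exceedance event over the local events $\{S_1^n=n\tau\}$, truncation of the integral to the window $[a_n,a_n+\eta_n]$ using the convexity/monotonicity of $I$ and the condition $nm^{-1}(a_n)\eta_n\to\infty$ to make the tail negligible, substitution of the estimates (\ref{Hoglund}) and (\ref{Hoglund density}) from Lemma \ref{JensenLemme} to produce the prefactor $ts(t)e^{nI(a_n)}$, and insertion of the local approximations $\pi^{\tau}$ or $g_{\tau}$ uniformly on the shrinking window. The uniformity concern you flag is exactly the point the paper also treats only briefly, so your outline matches the published argument in both structure and level of detail.
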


The proof is postponed to the Section \ref{appendix}.

\begin{remark}
Conditions (\ref{croissance de aen O(1)}) and (\ref{croissance de aen o(1)}) have to be compared with the growth condition pertaining to the sequence $a_{n}$ for which (\ref{democracy}) holds. Consider the regularly varying case, namely assume that $h(x)=x^{\beta}l(x)$ for some $\beta>0$. Then making use of Theorem \ref{THM3.1} it is readily checked that (\ref{croissance de aen O(1)}) amounts to 
\begin{equation}
\lim\inf_{n\rightarrow\infty}\frac{a_{n}}{n^{1/\left(1+\beta\right)}}>0.  \label{croissrapide}
\end{equation}
Now (\ref{democracy}) holds whether for some $\delta >1/\left(\beta+1\right)$
\begin{equation}
\lim \inf_{n\rightarrow\infty}\frac{a_{n}}{n^{\delta}}>0.  \label{demo}
\end{equation}
Assume that (\ref{demo}) holds; then (\ref{croissrapide}) holds for all distributions $p$ with $h(x)=x^{\beta}l(x)$ and $\beta>\left(1-\delta\right)/\delta$. This can be stated as follows: Assume that for some $0<\eta<1$ it holds 
\[
\lim\inf_{n\rightarrow\infty}\frac{a_{n}}{n^{\eta}}>0
\]
then whenever $\beta>\left(1-\eta \right)/\eta$ (\ref{demo}) and (\ref{croissrapide}) simultaneously hold.
\end{remark}

\section{Appendix}\label{appendix}

\subsection{Proof of Theorem \protect\ref{3theorem1}}

We state a preliminary Lemma, whose role is to provide some information on the characteristic function of the normalised random variable $\left(\mathcal{X}_{t}-m(t)\right)/s(t)$ with density $\widetilde{\pi}_{t}$ defined by 
\begin{equation}
\widetilde{\pi}_{t}(x):=\frac{s(t)\exp t\left(s(t)x+m(t)\right)p(s(t)x+m(t))}{\phi(t)}  \label{p_t}
\end{equation}
as $t\rightarrow\infty$. The density $p$ satisfies the hypotheses in Section \ref{notation}. Denote $\varphi^{a_{n}}(u):=\int e^{iux}\widetilde{\pi}_{t}(x)dx$ the characteristic function of $\left(\mathcal{X}_{t}-m(t)\right)/s(t)$. It holds

\begin{lemma}
\label{LemmaBorne}Assume that there exists $c_{1},c_{2}$ both positive such that for all $t$
\begin{equation}
\widetilde{\pi}_{t}(x)>c_{1}\text{ for }\left\vert x\right\vert<c_{2}
\label{regul}
\end{equation}
then under the hypotheses stated in Section \ref{notation}, for any $c>0$ there exists $\rho<1$ such that 
\begin{equation}
\left\vert\varphi^{a_{n}}(u)\right\vert\leq\rho
\label{borne-f.c.}
\end{equation}
for $\left\vert u\right\vert>c$ and all $a_{n}$.
\end{lemma}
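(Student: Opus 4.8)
The plan is to observe that the tilting parameter $t$ (equivalently $a_{n}$, via $m(t)=a_{n}$) enters this statement only through the uniform lower bound (\ref{regul}); granting that, a classical second-moment identity for characteristic functions produces the bound with a constant $\rho$ depending only on $c_{1}$, $c_{2}$ and the threshold $c$, hence uniform in $a_{n}$. The hypotheses of Section \ref{notation} are invoked here only to ensure that $\widetilde{\pi}_{t}$ is a bona fide probability density on $\mathbb{R}$; the analytic substance of the Lemma is packed into (\ref{regul}), whose verification (through Theorem \ref{THM3.1} and the behaviour of $s(t)$ and $\mu_{3}(t)$ as $t\to\infty$) is the real point and is handled separately.

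First I would write, for real $u$,
\[
1-\left|\varphi^{a_{n}}(u)\right|^{2}=\int\int\big(1-\cos(u(x-y))\big)\,\widetilde{\pi}_{t}(x)\,\widetilde{\pi}_{t}(y)\,dx\,dy ,
\]
which is nonnegative because the integrand is. Restricting the integration to the square $\{|x|<c_{2}\}\times\{|y|<c_{2}\}$ and using (\ref{regul}) to bound $\widetilde{\pi}_{t}(x)\widetilde{\pi}_{t}(y)>c_{1}^{2}$ there,
\[
1-\left|\varphi^{a_{n}}(u)\right|^{2}\ \geq\ c_{1}^{2}\int_{-c_{2}}^{c_{2}}\!\!\int_{-c_{2}}^{c_{2}}\big(1-\cos(u(x-y))\big)\,dx\,dy .
\]
Writing $\cos(u(x-y))=\mathrm{Re}\,\big(e^{iux}\,\overline{e^{iuy}}\big)$, the inner double integral equals $4c_{2}^{2}-\big|\int_{-c_{2}}^{c_{2}}e^{iux}\,dx\big|^{2}=4c_{2}^{2}\big(1-\mathrm{sinc}^{2}(c_{2}u)\big)$, with $\mathrm{sinc}(v):=\sin v/v$.

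It then remains to keep $\mathrm{sinc}^{2}(c_{2}u)$ away from $1$ for $|u|\geq c$. Since $|\sin v/v|<1$ for every $v\neq 0$ and $\sin v/v\to 0$ as $|v|\to\infty$, continuity gives $q(\delta):=\sup_{|v|\geq\delta}|\sin v/v|<1$ for each fixed $\delta>0$; with $\delta=c_{2}c$ this yields $\mathrm{sinc}^{2}(c_{2}u)\leq q(c_{2}c)^{2}<1$ whenever $|u|\geq c$. Combining the displays,
\[
\left|\varphi^{a_{n}}(u)\right|^{2}\ \leq\ 1-4c_{1}^{2}c_{2}^{2}\big(1-q(c_{2}c)^{2}\big)=:\rho^{2}\qquad\text{for }|u|\geq c ,
\]
and $\rho<1$: indeed $2c_{1}c_{2}<1$, since $\widetilde{\pi}_{t}$ integrates to $1$ while exceeding $c_{1}$ on an interval of length $2c_{2}$, and $q(c_{2}c)<1$. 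The right-hand side does not depend on $t$, hence not on $a_{n}$, which is exactly the uniformity claimed. (Alternatively, the mixture decomposition $\widetilde{\pi}_{t}=2c_{1}c_{2}\,U+(1-2c_{1}c_{2})\,r_{t}$, with $U$ uniform on $(-c_{2},c_{2})$ and $r_{t}$ a probability density, gives the same conclusion through $\left|\varphi^{a_{n}}(u)\right|\leq 2c_{1}c_{2}\left|\mathrm{sinc}(c_{2}u)\right|+(1-2c_{1}c_{2})$.) I do not anticipate any genuine obstacle internal to this Lemma; the hard part of the surrounding argument is establishing the hypothesis (\ref{regul}) uniformly in $t$, where the regular/rapid variation assumptions on $h$ and the moment asymptotics of Theorem \ref{THM3.1} are what make things work.
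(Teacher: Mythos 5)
Your proof is correct, and its main line is genuinely different from the paper's. The paper (following Jensen) argues by domination: since $\widetilde{\pi}_{t}\geq c_{1}\mathbf{1}_{(-\epsilon,\epsilon)}$ with $\epsilon=c_{2}/2$, one splits $\widetilde{\pi}_{t}$ into this uniform block plus a nonnegative remainder of mass $1-2\epsilon c_{1}$ and bounds
\[
\left\vert\varphi^{a_{n}}(u)\right\vert\leq 2\epsilon c_{1}\left\vert\frac{\sin(\epsilon u)}{\epsilon u}\right\vert+\left(1-2\epsilon c_{1}\right),
\]
which is exactly the mixture bound you relegate to your parenthetical remark. Your primary route instead uses the symmetrization identity $1-\left\vert\varphi^{a_{n}}(u)\right\vert^{2}=\iint\left(1-\cos\left(u(x-y)\right)\right)\widetilde{\pi}_{t}(x)\widetilde{\pi}_{t}(y)\,dx\,dy$, restricts to the square where (\ref{regul}) applies, and evaluates the resulting double integral in closed form; the computation is correct (the needed facts $2c_{1}c_{2}<1$ and $\sup_{\vert v\vert\geq\delta}\vert\sin v/v\vert<1$ are both justified), and both approaches yield a $\rho$ depending only on $c_{1},c_{2},c$, hence the uniformity in $a_{n}$. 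If anything, your identity-based bound is slightly more quantitative (a lower bound on $1-\vert\varphi\vert^{2}$ rather than a triangle-inequality upper bound), while the paper's decomposition is shorter. One contextual point: in the paper the proof of this Lemma does not stop at the conditional statement; it also verifies the hypothesis (\ref{regul}) itself, via Nagaev's uniform Gaussian approximation (\ref{supTiltedGauss}) of $\widetilde{\pi}_{t}$. Since the Lemma as stated takes (\ref{regul}) as an assumption, your choice to treat its verification as a separate matter is legitimate, and you correctly identify it as where the Section \ref{notation} hypotheses really enter.
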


\begin{proof} The proof of this Lemma is in \cite{Jensen}, p150; we state it for completeness. Assume (\ref{regul}) holds with $\tilde{\pi}_{t}(x)>c_{1}$ for $\left\vert x\right\vert>c_{2}$ and setting $\epsilon:=c_{2}/2$
\begin{eqnarray*}
\left\vert\varphi^{a_{n}}(u)\right\vert &\leq &\left\vert\int e^{izu}{\Large 1}\left( \left\vert z\right\vert<\epsilon\right)c_{1}dz\right\vert+\int\left\{\tilde{\pi}_{t}(z)-{\Large 1}\left(\left\vert z\right\vert<\epsilon\right)c_{1}\right\}dz \\
&\leq &c_{1}\left(2\epsilon\right)\left\vert\frac{e^{iu\epsilon}-e^{-iu\epsilon}}{2iu\epsilon}\right\vert+\left\{1-2\epsilon c_{1}\right\}
\end{eqnarray*}
and the last expression is independent on $a_{n}$ and is such that for any $c>0$ there exists $\rho<1$ such that the expression is less than $\rho$ for $\left\vert u\right\vert>c.$

For the density function $p(x)$ Theorem 5.4 of Nagaev \cite{Nagaev} states that the normalized tilted density of $p(x)$, namely, $\widetilde{\pi}_{t}(x)$ has the property 
\begin{equation}
\lim_{a_{n}\rightarrow\infty}\sup_{x\in\mathbb{R}}|\widetilde{\pi}_{t}(x)-\varphi(x)|=0  \label{supTiltedGauss}
\end{equation}
which proves (\ref{regul}).
\end{proof}

We now turn to the Proof of Theorem \ref{3theorem1}.

Since the proof is based on characteristic function (c.f.) arguments, we will use the following notation, in accordance with the common use in this area, therefore turning from laplace transform notation to characteristic function ones. Recall that we denote $\widetilde{\pi}_{t}$ the normalized conjugate density of $p(x)$. Also $\rho_{n}$ is the normalized $n-$fold convolution of $\widetilde{\pi}_{t}.$ Hence we consider the triangular array whose $n-$th row consists in $n$ i.i.d. copies of a r.v. with standardized density $\widetilde{\pi}_{t}$ and the sum of the row, divided by $\sqrt{n}$, has density $\rho_{n}.$ The standard Gaussian density is denoted $\phi$. The c.f. of $\widetilde{\pi}_{t}$ is denoted $\varphi^{a_{n}}$ so that the c.f. of $\rho_{n}$ is $\left(\varphi^{a_{n}}(.)\right)^{n},$ and $m(t)=a_{n}$.

\textbf{Step 1:} In this step, we will express the following formula $G(x)$ by its Fourier transform. Let 
\[
G(x):=\rho_{n}(x)-\phi(x)-\frac{\mu_{3}}{6\sqrt{n}s_{n}^{3}}\left(x^{3}-3x\right)\phi (x).
\]
From 
\[
\phi(x):=\frac{1}{2\pi}\int_{-\infty}^{\infty}e^{-i\tau x}e^{-\frac{1}{2}\tau^{2}}d\tau,
\]
it follows that 
\[
\phi^{\prime\prime\prime}(x)=-\frac{1}{2\pi}\int_{-\infty}^{\infty}(i\tau)^{3}e^{-i\tau x}e^{-\frac{1}{2}\tau^{2}}d\tau.
\]
On the other hand 
\[
\phi^{\prime\prime\prime}(x)=-(x^{3}-3x)\phi(x),
\]
which gives 
\begin{equation}
(x^{3}-3x)\phi(x)=\frac{1}{2\pi}\int_{-\infty}^{\infty}(i\tau)^{3}e^{-i\tau x}e^{-\frac{1}{2}\tau^{2}}d\tau.  \label{3the11}
\end{equation}
By Fourier inversion 
\begin{equation}
\rho_{n}(x)=\frac{1}{2\pi}\int_{-\infty}^{\infty}e^{-i\tau x}\left(\varphi^{a_{n}}(\tau/\sqrt{n})\right)^{n}d\tau.  \label{3the12}
\end{equation}
Using (\ref{3the11}) and (\ref{3the12}), we have 
\[
G(x)=\frac{1}{2\pi}\int_{-\infty}^{\infty}e^{-i\tau x}\left(\varphi^{a_{n}}(\tau/\sqrt{n})^{n}-e^{-\frac{1}{2}\tau^{2}}-\frac{\mu_{3}}{6\sqrt{n}s^{3}}(i\tau)^{3}e^{-\frac{1}{2}\tau^{2}}\right)d\tau.
\]
Hence it holds 
\begin{align*}
& \Big|\rho_{n}(x)-\phi(x)-\frac{\mu_{3}}{6\sqrt{n}s^{3}}\left(x^{3}-3x\right)\phi(x)\Big| \\
& \leq\frac{1}{2\pi}\int_{-\infty}^{\infty}\left|\left(\varphi^{a_{n}}(\tau/\sqrt{n})\right)^{n}-e^{-\frac{1}{2}\tau^{2}}-\frac{\mu_{3}}{6\sqrt{n}s^{3}}(i\tau)^{3}e^{-\frac{1}{2}\tau^{2}}\right|d\tau.
\end{align*}
\textbf{Step 2:} In this step, we show that for large $n$, the characteristic function $\varphi^{a_{n}}$ satisfies 
\[
\int|\varphi^{a_{n}}(\tau)|^{2}d\tau<\infty
\]
By Parseval identity 
\[
\int|\varphi^{a_{n}}(\tau)|^{2}d\tau=2\pi\int(\widetilde{\pi}_{t}(x))^{2}dx\leq 2\pi\sup_{x\in\mathbb{R}}\widetilde{\pi}_{t}(x)<\infty.
\]
Use (\ref{supTiltedGauss}) to conclude the proof.

\textbf{Step 3:} In this step, we complete the proof by showing that when $n\rightarrow \infty$ 
\begin{equation}
\int_{-\infty}^{\infty}\Big|\big(\varphi^{a_{n}}(\tau/\sqrt{n})\big)^{n}-e^{-\frac{1}{2}\tau^{2}}-\frac{\mu_{3}}{6\sqrt{n}s^{3}}(i\tau)^{3}e^{-\frac{1}{2}\tau^{2}}\Big|d\tau=o\Big(\frac{1}{\sqrt{n}}\Big).\label{3theo100}
\end{equation}

The LHS in (\ref{3theo100}) is splitted on $|\tau|>\omega\sqrt{n}$ and on $|\tau|\leq\omega\sqrt{n}$. It holds 
\begin{align}
& \sqrt{n}\int_{|\tau|>\omega\sqrt{n}}\Big|\big(\varphi^{a_{n}}(\tau/\sqrt{n})\big)^{n}-e^{-\frac{1}{2}\tau^{2}}-\frac{\mu_{3}}{6\sqrt{n}s^{3}}(i\tau )^{3}e^{-\frac{1}{2}\tau^{2}}\Big|d\tau   \notag \\
& \leq\sqrt{n}\int_{|\tau|>\omega\sqrt{n}}\Big|\big(\varphi^{a_{n}}(\tau/\sqrt{n})\big)\Big|^{n}d\tau+\sqrt{n}\int_{|\tau|>\omega\sqrt{n}}\Big|e^{-\frac{1}{2}\tau^{2}}+\frac{\mu_{3}}{6\sqrt{n}s^{3}}(i\tau)^{3}e^{-\frac{1}{2}\tau^{2}}\Big|d\tau   \notag \\
& \leq\sqrt{n}\rho^{n-2}\int_{|\tau|>\omega\sqrt{n}}\Big|\big(\varphi^{a_{n}}(\tau/\sqrt{n})\big)\Big|^{2}d\tau+\sqrt{n}\int_{|\tau|>\omega\sqrt{n}}e^{-\frac{1}{2}\tau ^{2}}\Big(1+\Big|\frac{\mu_{3}\tau^{3}}{6\sqrt{n}s^{3}}\Big|\Big)d\tau.  \label{line3}
\end{align}
where we used Lemma \ref{LemmaBorne} from the second line to the third one. The first term of the last line tends to $0$ when $n\rightarrow\infty$, since 
\begin{align*}
& \sqrt{n}\rho^{n-2}\int_{|\tau|>\omega\sqrt{n}}\Big|\big(\varphi^{a_{n}}(\tau/\sqrt{n})\big)\Big|^{2}d\tau  \\
& =\exp\left(\frac{1}{2}\log n+(n-2)\log\rho+\log\int_{|\tau|>\omega\sqrt{n}}\left(\varphi^{a_{n}}(\tau/\sqrt{n})\right)^{2}d\tau\right)\longrightarrow 0.
\end{align*}
By Corollary \ref{3cor1} when $n\rightarrow\infty$ 
\begin{align*}
& \sqrt{n}\int_{|\tau|>\omega\sqrt{n}}e^{-\frac{1}{2}\tau^{2}}\Big(1+\Big|\frac{\mu_{3}\tau^{3}}{6\sqrt{n}s^{3}}\Big|\Big)d\tau  \\
& \leq\sqrt{n}\int_{|\tau|>\omega\sqrt{n}}e^{-\frac{1}{2}\tau^{2}}|\tau|^{3}d\tau =\sqrt{n}\int_{|\tau|>\omega\sqrt{n}}\exp\Big\{-\frac{1}{2}\tau^{2}+3\log|\tau|\Big\}d\tau  \\
& =2\sqrt{n}\exp\big(-\omega^{2}n/2+o(\omega^{2}n/2)\big)\longrightarrow 0,
\end{align*}
where the second equality holds from, for example, Chapiter $4$ of \cite{Bingham}. Summing up, when $n\rightarrow\infty$ 
\[
\int_{|\tau|>\omega\sqrt{n}}\Big|\big(\varphi^{a_{n}}(\tau/\sqrt{n})\big)^{n}-e^{-\frac{1}{2}\tau^{2}}-\frac{\mu_{3}}{6\sqrt{n}s^{3}}(i\tau)^{3}e^{-\frac{1}{2}\tau^{2}}\Big|d\tau=o\Big(\frac{1}{\sqrt{n}}\Big).
\]
If $|\tau|\leq\omega\sqrt{n}$, it holds 
\begin{align}
& \int_{|\tau|\leq\omega\sqrt{n}}\Big|\big(\varphi^{a_{n}}(\tau/\sqrt{n})\big)^{n}-e^{-\frac{1}{2}\tau^{2}}-\frac{\mu_{3}}{6\sqrt{n}s^{3}}(i\tau)^{3}e^{-\frac{1}{2}\tau^{2}}\Big|d\tau   \notag \\
& =\int_{|\tau|\leq\omega\sqrt{n}}e^{-\frac{1}{2}\tau^{2}}\Big|\big(\varphi^{a_{n}}(\tau/\sqrt{n})\big)^{n}e^{\frac{1}{2}\tau^{2}}-1-\frac{\mu_{3}}{6\sqrt{n}s^{3}}(i\tau)^{3}\Big|d\tau   \notag \\
& =\int_{|\tau|\leq\omega\sqrt{n}}e^{-\frac{1}{2}\tau^{2}}\Big|\exp\Big\{n\log\varphi^{a_{n}}(\tau/\sqrt{n})+{\frac{1}{2}\tau^{2}}\Big\}-1-\frac{\mu_{3}}{6\sqrt{n}s^{3}}(i\tau)^{3}\Big|d\tau.  \label{line3bis}
\end{align}
The integrand in the last display is bounded through 
\[
|e^{\alpha}-1-\beta|=|(e^{\alpha}-e^{\beta})+(e^{\beta}-1-\beta)|\leq(|\alpha-\beta|+\frac{1}{2}\beta^{2})e^{\gamma},
\]
where $\gamma\geq\max(|\alpha|,|\beta|)$; this inequality follows replacing $e^{\alpha},e^{\beta}$ by their power series, for real or complex $\alpha,\beta$. Denote by 
\[
\gamma(\tau)=\log\varphi^{a_{n}}(\tau)+{\frac{1}{2}\tau^{2}}.
\]
Since $\gamma^{\prime}(0)=\gamma^{\prime\prime}(0)=0$, the third order Taylor expansion of $\gamma(\tau)$ at $\tau=0$ yields 
\[
\gamma(\tau)=\gamma(0)+\gamma^{\prime}(0)\tau+\frac{1}{2}\gamma^{\prime\prime}(0)\tau^{2}+\frac{1}{6}\gamma^{\prime\prime\prime}(\xi)\tau^{3}=\frac{1}{6}\gamma^{\prime\prime\prime}(\xi)\tau^{3},
\]
where $0<\xi<\tau$. Hence it holds 
\[
\Big|\gamma(\tau)-\frac{\mu_{3}}{6s^{3}}(i\tau)^{3}\Big|=\Big|\gamma^{\prime\prime\prime}(\xi)-\frac{\mu_{3}}{s_{n}^{3}}i^{3}\Big|\frac{\tau^{3}}{6}.
\]
Here $\gamma^{\prime\prime\prime}$ is continuous; thus we can choose $\omega$ small enough such that $|\gamma^{\prime\prime\prime}(\xi)|<\rho$ for $|\tau|<\omega$. Meanwhile, for $n$ large enough, according to Corollary \ref{3cor1}, we have $\mu_{3}/s^{3}\rightarrow 0$. Hence it holds for $n$ large enough 
\begin{equation}
\Big|\gamma(\tau)-\frac{\mu_{3}}{6s^{3}}(i\tau)^{3}\Big|\leq\Big(|\gamma^{\prime\prime\prime}(\xi)|+\rho\Big)\frac{|\tau|^{3}}{6}<\rho\tau^{3}.  \label{ineg}
\end{equation}
Choose $\omega$ small enough, such that for $n$ large enough it holds for $|\tau|<\omega$ 
\[
\Big|\frac{\mu_{3}}{6s^{3}}(i\tau)^{3}\Big|\leq\frac{1}{4}\tau^{2},\text{ and }\gamma(\tau)|\leq\frac{1}{4}\tau^{2}.
\]
For this choice of $\omega$, when $|\tau|<\omega$ we have 
\[
\max\Big(\Big|\frac{\mu_{3}}{6s^{3}}(i\tau)^{3}\Big|,|\gamma(\tau)|\Big)\leq\frac{1}{4}\tau^{2}.
\]
Replacing $\tau$ by $\tau/\sqrt{n}$, it holds for $|\tau|<\omega\sqrt{n}$, and using (\ref{ineg}) 
\begin{align*}
& \Big|n\log\varphi^{a_{n}}(\tau/\sqrt{n})+{\frac{1}{2}\tau^{2}}-\frac{\mu_{3}}{6\sqrt{n}s^{3}}(i\tau)^{3}\Big| \\
& =n\Big|\log\varphi^{a_{n}}(\tau/\sqrt{n})+{\frac{1}{2}\Big(\frac{\tau}{\sqrt{n}}\Big)^{2}}-\frac{\mu_{3}}{6s^{3}}\Big(\frac{i\tau}{\sqrt{n}}\Big)^{3}\Big| \\
& =n\Big|\gamma\Big(\frac{\tau}{\sqrt{n}}\Big)-\frac{\mu_{3}}{6s^{3}}\Big(\frac{i\tau}{\sqrt{n}}\Big)^{3}\Big|<\frac{\rho|\tau|^{3}}{\sqrt{n}}.
\end{align*}
In a similar way, it also holds for $|\tau|<\omega\sqrt{n}$ 
\begin{align*}
& \max\Big(\Big|n\log\varphi^{a_{n}}(\tau/\sqrt{n})+{\frac{1}{2}\tau^{2}}\Big|,\Big|\frac{\mu_{3}}{6\sqrt{n}s^{3}}(i\tau)^{3}\Big|\Big) \\
& =n\max\Big(\Big|\gamma\Big(\frac{\tau}{\sqrt{n}}\Big)\Big|,\Big|\frac{\mu_{3}}{6s^{3}}\Big(\frac{i\tau}{\sqrt{n}}\Big)^{3}\Big|\Big)\leq\frac{1}{4}\tau^{2}.
\end{align*}
Turn to the integrand in (\ref{line3bis}). We then for $|\tau|<\omega\sqrt{n}$ 
\begin{align*}
& \Big|\exp\Big\{n\log\varphi^{a_{n}}(\tau/\sqrt{n})+{\frac{1}{2}\tau^{2}}\Big\}-1-\frac{\mu_{3}}{6\sqrt{n}s^{3}}(i\tau)^{3}\Big| \\
& \leq\Big(\Big|n\log\varphi^{a_{n}}(\tau/\sqrt{n})+{\frac{1}{2}\tau^{2}}-\frac{\mu_{3}}{6\sqrt{n}s^{3}}(i\tau)^{3}\Big|+\frac{1}{2}\Big|\frac{\mu_{3}}{6\sqrt{n}s^{3}}(i\tau)^{3}\Big|^{2}\Big) \\
& \qquad\times\exp\Big[\max\Big(\Big|n\log\varphi^{a_{n}}(\tau/\sqrt{n})+{\frac{1}{2}\tau^{2}}\Big|,\Big|\frac{\mu_{3}}{6\sqrt{n}s^{3}}(i\tau)^{3}\Big|\Big)\Big] \\
& \leq\Big(\frac{\rho|\tau|^{3}}{\sqrt{n}}+\frac{1}{2}\Big|\frac{\mu_{3}}{6\sqrt{n}s^{3}}(i\tau)^{3}\Big|^{2}\Big)\exp\Big(\frac{\tau^{2}}{4}\Big)\\
& =\Big(\frac{\rho|\tau|^{3}}{\sqrt{n}}+\frac{\mu_{3}^{2}\tau^{6}}{72ns^{6}}\Big)\exp \Big(\frac{\tau^{2}}{4}\Big).
\end{align*}
Use this upper bound to obtain 
\begin{align*}
& \int_{|\tau|\leq\omega\sqrt{n}}\Big|\big(\varphi^{a_{n}}(\tau/\sqrt{n})\big)^{n}-e^{-\frac{1}{2}\tau ^{2}}-\frac{\mu_{3}}{6\sqrt{n}s^{3}}(i\tau)^{3}e^{-\frac{1}{2}\tau^{2}}\Big|d\tau  \\
& \leq\int_{|\tau|\leq\omega\sqrt{n}}\exp\Big(-\frac{\tau^{2}}{4}\Big)\Big(\frac{\rho|\tau|^{3}}{\sqrt{n}}+\frac{\mu_{3}^{2}\tau^{6}}{72ns^{6}}\Big)d\tau  \\
& =\frac{\rho}{\sqrt{n}}\int_{|\tau|\leq\omega\sqrt{n}}\exp\Big(-\frac{\tau^{2}}{4}\Big)|\tau|^{3}d\tau+\frac{\mu_{3}^{2}}{72ns^{6}}\int_{|\tau|\leq\omega\sqrt{n}}\exp\Big(-\frac{\tau^{2}}{4}\Big)\tau^{6}d\tau,
\end{align*}
where both the first integral and the second integral are finite, and $\rho$ is arbitrarily small; use Corollary \ref{3cor1}, to obtain 
\[
\int_{|\tau|\leq\omega\sqrt{n}}\Big|\big(\varphi^{a_{n}}(\tau/\sqrt{n})\big)^{n}-e^{-\frac{1}{2}\tau^{2}}-\frac{\mu_{3}}{6\sqrt{n}s^{3}}(i\tau)^{3}e^{-\frac{1}{2}\tau^{2}}\Big|d\tau=o\Big(\frac{1}{\sqrt{n}}\Big).
\]
This gives (\ref{3theo100}), and therefore we obtain 
\[
\Big|\bar{\pi}_{n}^{a_{n}}(x)-\phi(x)-\frac{\mu_{3}}{6\sqrt{n}s^{3}}\big(x^{3}-3x\big)\phi(x)\Big|=o\Big(\frac{1}{\sqrt{n}}\Big),
\]
which concludes the proof.

\subsection{Proof of Theorem \ref{pThm oint conditional density sous o(1)}}

It is well known and easily checked that the conditional density $p(X_{1}^{k}=y_{1}^{k}|S_{1}^{n}=na_{n})$ is invariant under any i.i.d sampling scheme in the family of densities $\pi^{\alpha}$ as $\alpha$ belongs to $Im(X_{1})$ (commonly called tilting change of measure).
Namely
\[
p(X_{1}^{k}=y_{1}^{k}|S_{1}^{n}=na_{n})=\pi^{\alpha}(X_{1}^{k}=y_{1}^{k}|S_{1}^{n}=na_{n})
\]
where on the LHS the $X_{i}$'s are sampled i.i.d. under $p$ and on the RHS they are sampled i.i.d. under $\pi^{\alpha}$.

Using Bayes formula, it thus holds 
\begin{align}
& p(X_{1}=y_{1}|S_{1}^{n}=na_{n})=\pi ^{m}(X_{1}=y_{1}|S_{1}^{n}=na_{n}) \notag \\
& =\pi ^{m}(X_{1}=y_{1})\frac{\pi ^{m}(S_{2}^{n}=na_{n}-y_{1})}{\pi^{m}(S_{1}^{n}=na_{n})}  \notag \\
& =\frac{\sqrt{n}}{\sqrt{n-1}}\pi ^{m}(X_1=y_1)\frac{\widetilde{\pi_{n-1}}(\frac{m-y_{1}}{s\sqrt{n-1}})}{\widetilde{\pi _{n}}(0)}, \label{Bayes formula}
\end{align}
where $\widetilde{\pi_{n-1}}$ is the normalized density of $S_{2}^{n}$ under i.i.d. sampling with the density $\pi ^{a_{n}};$ correspondingly, $\widetilde{\pi_{n}}$ is the normalized density of $S_{1}^{n}$ under the same sampling. Note that a r.v. with density $\pi ^{an}$ has expectation $m$ and variance $s^{2}$. Perform a third-order Edgeworth expansion of $\widetilde{\pi_{n-1}}(z)$, using Theorem \ref{3theorem1}. It follows 
\[
\widetilde{\pi_{n-1}}(z)=\phi(z)\Big(1+\frac{\mu_{3}}{6s^{3}\sqrt{n-1}}(z^{3}-3z)\Big)+o\Big(\frac{1}{\sqrt{n}}\Big),
\]
The approximation of $\widetilde{\pi_{n}}(0)$ is 
\[
\widetilde{\pi_{n}}(0)=\phi(0)\Big(1+o\big(\frac{1}{\sqrt{n}}\big)\Big).
\]
Hence (\ref{Bayes formula}) becomes 
\begin{align}
& p(X_{1}=y_{1}|S_{1}^{n}=na_{n})  \notag \\
& =\frac{\sqrt{n}}{\sqrt{n-1}}\pi^{m}(X_{1}=y_{1})\frac{\phi(z)}{\phi(0)}\Big[1+\frac{\mu_{3}}{6s^{3}\sqrt{n-1}}(z^{3}-3z)+o\Big(\frac{1}{\sqrt{n}}\Big)\Big] \label{conddens} \\
& =\frac{\sqrt{2\pi n}}{\sqrt{n-1}}\pi^{m}(X=y_{1}){\phi(z)}\big(1+R_{n}+o(1/\sqrt{n})\big),  \notag
\end{align}
where 
\[
R_{n}=\frac{\mu_{3}}{6s^{3}\sqrt{n-1}}(z^{3}-3z).
\]
Under condition (\ref{croissance de aen o(1)}), by Corollary (\ref{3cor1}), $\mu_{3}/s^{3}\rightarrow 0.$ This yields 
\[
R_{n}=o\big(1/\sqrt{n}\big),
\]
which gives
\[
p(X_{1}=y_{1}|S_{1}^{n}=na_{n})=\pi^{m}(X=y_{1})\big(1+o(1/\sqrt{n})\big)
\]
as claimed.

\subsection{Proof of Proposition \protect\ref{Prop indCondCroissance Lente}}

Denote 
\[
z_{i}:=\frac{m_i-y_{i+1}}{s_{i}\sqrt{n-i-1}}
\]
where 
\[
s_{i}^{2}:=s^{2}(t_{i}).
\]

We first state a Lemma pertaining to the order of magnitude of $z_{i}$. The proof of this Lemma is in the next Subsection

\begin{lemma}
\label{3lemma z} Assume that $h(x)\in\mathcal{R}$. Let $t_{i}$ be defined by (\ref{def-t_i}). Assume that $a_{n}\rightarrow\infty$ as $n\rightarrow\infty$ and that (\ref{croissance de aen o(1)}) holds. Then as $n\rightarrow\infty$ 
\[
\lim_{n\rightarrow\infty}\sup_{0\leq i\leq k-1}z_{i}=0,\qquad\emph{and}\qquad\sup_{0\leq i\leq k-1}z_{i}^{2}=o\left(\frac{1}{\sqrt{n}}\right).
\]
\end{lemma}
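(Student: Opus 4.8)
The plan is to reduce the lemma to two uniform asymptotic equivalences over the fixed finite index set $\{0,1,\dots,k-1\}$ — namely $m_i\sim a_n$ and $s_i\sim s$ — and then to read off both assertions from (\ref{croissance de aen o(1)}). The means are immediate: since $k$ is fixed, $s_1^i=y_1+\cdots+y_i$ is bounded and $i$ stays in a finite set, so $m_i-a_n=\frac{ia_n-s_1^i}{n-i}=O(a_n/n)$ uniformly in $0\le i\le k-1$; hence $m_i=a_n(1+o(1))$, and $m_i-y_{i+1}=a_n(1+o(1))$ as well, because $y_{i+1}$ is fixed while $a_n\to\infty$.

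The real work is the control of $s_i=s(t_i)$. From $m(t_i)=m_i=a_n(1+o(1))$ and $m(t)=a_n$, Theorem \ref{THM3.1} gives $\psi(t_i)=a_n(1+o(1))=\psi(t)(1+o(1))$. Since $\psi$ is monotone and, according to the case, belongs to $RV(1/\beta)$ or to $RV(0)$, this upgrades to $t_i/t\to1$: in the regularly varying case directly, by Karamata's uniform convergence theorem; in the rapidly varying case using in addition the quantitative bound $m_i-a_n=O(a_n/n)$ together with (\ref{croissance de aen o(1)}) to keep $t_i$ within a negligible relative distance of $t$. Once $t_i/t\to1$, the regularity of $\psi'$ built into the hypotheses (\ref{2.3})--(\ref{2.4}) on $\epsilon$ gives $\psi'(t_i)\sim\psi'(t)$, whence $s_i^{2}=s^{2}(t_i)\sim s^{2}(t)=s^{2}$ by a second application of Theorem \ref{THM3.1}.

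It then only remains to assemble the pieces: writing $z_i=\dfrac{m_i-y_{i+1}}{s_i\sqrt{n-i-1}}=\dfrac{a_n}{s\sqrt{n}}(1+o(1))$ uniformly in $i$, condition (\ref{croissance de aen o(1)}) gives $\sup_{0\le i\le k-1}z_i\to0$, which is the first assertion; and $\sup_{0\le i\le k-1}z_i^{2}=\dfrac{a_n^{2}}{s^{2}n}(1+o(1))$, whence $\sup_{0\le i\le k-1}z_i^{2}=o(1/\sqrt n)$ by (\ref{croissance de aen o(1)}), the second assertion.

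I expect the second step to be the main obstacle. From $m_i\sim a_n$ alone one cannot infer $s_i\sim s$, since $s$ is not in general a function of $m$ with bounded logarithmic derivative; the whole purpose of the regularity conditions on $h$ and of Theorem \ref{THM3.1} is to guarantee that a small multiplicative perturbation of the mean causes only a small multiplicative perturbation of $s$. In the rapidly varying case the growth restriction (\ref{croissance de aen o(1)}) must already enter at this stage, to secure $t_i/t\to1$ before the equivalence can be transferred from $\psi$ to $\psi'$.
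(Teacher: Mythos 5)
Your treatment of the regularly varying case is essentially the paper's own argument (reduce $z_i$ to $\psi(t_i)/\sqrt{n\psi'(t_i)}$ via Theorem \ref{THM3.1}, then transfer the equivalence of the means into an equivalence of the variances through the Karamata form of $h'$), and your final step, reading $\sup_i z_i^2=\big(a_n/(s\sqrt n)\big)^2(1+o(1))=o(1/\sqrt n)$ off condition (\ref{croissance de aen o(1)}), is exactly the reading the paper itself makes, so I raise no objection there.

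The gap is in the rapidly varying case. Your route requires the two-sided equivalence $s_i\sim s$, which you propose to obtain from $t_i/t\to 1$; but since $\psi\in RV(0)$ there, $\psi(t_i)\sim\psi(t)$ gives no control whatsoever on $t_i/t$ (a slowly varying function is asymptotically constant along $t_i=\lambda t$ for any fixed $\lambda$, and worse), so the one sentence you offer --- ``using the quantitative bound $m_i-a_n=O(a_n/n)$ together with (\ref{croissance de aen o(1)}) to keep $t_i$ within a negligible relative distance of $t$'' --- is precisely the nontrivial claim, and it is asserted rather than proved. It can in fact be repaired, but only with a genuine argument: e.g.\ first localize $t_i\in[t,2t]$ by comparing $m_i-a_n=O(a_n/n)$ with $m(2t)-m(t)\gtrsim\psi(t)\epsilon(t)$, which requires showing $n\epsilon(t)\to\infty$ under (\ref{croissance de aen o(1)}), and then apply the mean value theorem to $m$ with $s^2$ eventually decreasing (since $\mu_3\sim\psi''<0$). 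None of this is in your proposal. The paper avoids the issue altogether: in the rapidly varying case it never claims $s_i\sim s$, but only uses $t_i\le t_k$ and the monotonicity of $\psi'$ (from $\psi''(t)\sim-\psi(t)\epsilon(t)/t^2<0$) to get the one-sided bound $\psi'(t_i)\ge\psi'(t_k)$, hence $z_i\le 2\psi(t_k)/\sqrt{n\psi'(t_k)}$, which is all the lemma needs. So either supply the missing localization argument for $t_i/t\to1$, or replace that step by the paper's monotonicity device.
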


We turn to the proof of Proposition \ref{Prop indCondCroissance Lente}.

It holds by Bayes formula, 
\[
p_{a_{n}}(y_{1}^{k})=\prod_{i=0}^{k-1}p(X_{i+1}=y_{i+1}|S_{i+1}^{n}=na_{n}-s_{1}^{i}).
\]

Using the invariance of the conditional distributions under the tilting it holds, for any $i$ between $1$ and $k-1$
\begin{align*}
& p(X_{i+1}=y_{i+1}|S_{i+1}^{n}=na_{n}-S_{1}^{i})=\frac{\sqrt{2\pi (n-i)}}{\sqrt{n-i-1}}\pi^{m_{i}}(X_{i+1}=y_{i+1}){\phi(z_{i})}\big(1+o(1/\sqrt{n})\big) \\
& =\frac{\sqrt{n-i}}{\sqrt{n-i-1}}\pi^{m_{i}}(X_{i+1}=y_{i+1}){\big(1-z_{i}^{2}/2+o(z_{i}^{2})\big)}\left(1+o(1/\sqrt{n})\right)\big),
\end{align*}
where we used a Taylor expansion in the second equality. Using once more Lemma $\ref{3lemma z}$, under conditions (\ref{croissance de aen o(1)}), we have as $a_{n}\rightarrow\infty$ 
\[
z_{i}^{2}=o(1/\sqrt{n}).
\]%
Hence we get 
\[
p(X_{i+1}=y_{i+1}|S_{i+1}^{n}=na_{n}-s_{1}^{i})=\frac{\sqrt{n-i}}{\sqrt{n-i-1}}\pi^{m_{i}}(X_{i+1}=y_{i+1})\big(1+o(1/\sqrt{n})\big),
\]
which yields 
\begin{align*}
p(X_{1}^{k}=y_{1}^{k}|S_{1}^{n}=na_{n})& =\prod_{i=0}^{k-1}\Big(\frac{\sqrt{n-i}}{\sqrt{n-i-1}}\pi^{m_{i}}(X_{i+1}=y_{i+1})\big(1+o(1/\sqrt{n})\big)\Big) \\
& =\prod_{i=0}^{k-1}\pi^{m_{i}}(X_{i+1}=y_{i+1})\prod_{i=0}^{k-1}\frac{\sqrt{n-i}}{\sqrt{n-i-1}}\prod_{i=0}^{k-1}\left(1+o\left( \frac{1}{\sqrt{n}}\right)\right) \\
& =\left(1+o\left(\frac{1}{\sqrt{n}}\right)\right)\prod_{i=0}^{k-1}\pi^{m_{i}}(X_{i+1}=y_{i+1}),
\end{align*}
The proof is completed.

\subsection{Proof of Lemma \protect\ref{3lemma z}}

When $n\rightarrow\infty$, it holds 
\[
z_{i}\sim m_{i}/(s_{i}\sqrt{n}).
\]
From Theorem \ref{THM3.1}, it holds 
\[
z_{i}\sim\frac{\psi(t_{i})}{\sqrt{n\psi^{^{\prime}}(t_{i})}}.
\]
Since $m_{i}\sim m_{k}$ as $n\rightarrow\infty$, it holds
\[
m_{i}\sim\psi(t_{k}).
\]
Hence 
\[
\psi(t_{i})\sim\psi(t_{k}).
\]
\textbf{Case 1:} if $h(x)\in R_{\beta}$. Hence 
\[
h^{^{\prime}}(x)=x^{\beta-1}l_{0}(x)\left(\beta +\epsilon(x)\right).
\]
Set $x=\psi(t)$; we get 
\[
h^{^{\prime}}\left(\psi(t)\right)=\psi(t)^{\beta-1}l_{0}\left(\psi
(t)\right)\left(\beta+\epsilon\left(\psi(t)\right)\right).
\]
Notice that $\psi^{^{\prime}}(t)=1/h^{^{\prime}}\left(\psi(t)\right)$;
we obtain 
\[
\frac{\psi^{\prime}(t_{i})}{\psi ^{\prime}(t_{k})}=\frac{h^{^{\prime}}\left(\psi(t_{k})\right)}{h^{^{\prime}}\left(\psi (t_{i})\right)}=\frac{\left(\psi(t_{k})\right)^{\beta-1}l_{0}\left(\psi(t_{k})\right)\left(\beta+\epsilon\left(\psi(t_{k})\right)\right)}{\left(\psi(t_{i})\right)^{\beta-1}l_{0}\left(\psi(t_{i})\right)\left(\beta+\epsilon\left(\psi(t_{i})\right)\right)}\longrightarrow 1,
\]
where we use the slowly varying propriety of $l_{0}$. Thus it holds 
\[
\psi^{\prime}(t_{i})\sim\psi^{\prime}(t_{k}),
\]
which yields 
\[
z_{i}\sim\frac{\psi(t_{k})}{\sqrt{n\psi^{^{\prime}}(t_{k})}}.
\]
Hence we have under condition (\ref{croissance de aen o(1)}) 
\[
z_{i}^{2}\sim\frac{\psi(t_{k})^{2}}{{n\psi^{^{\prime}}(t_{k})}}=\frac{\psi(t_{k})^{2}}{{\sqrt{n}\psi^{^{\prime}}(t_{k})}}\frac{1}{\sqrt{n}}=o\left(\frac{1}{\sqrt{n}}\right) ,
\]
which implies further that $z_{i}\rightarrow 0$.

\textbf{Case 2:} if $h(x)\in R_{\infty}$. It holds $m(t_{k})\geq m(t_{i})$ as $n\rightarrow\infty$. Since the function $t\rightarrow m(t)$ is increasing, we have 
\[
t_{i}\leq t_{k}.
\]
The function $t\rightarrow\psi^{^{\prime}}(t)$ is decreasing, since 
\[
\psi^{^{\prime\prime}}(t)=-\frac{\psi(t)}{t^{2}}\epsilon(t)\left(1+o(1)\right)<0\qquad as\quad t\rightarrow\infty.
\]
Therefore as $n\rightarrow\infty$ 
\[
\psi^{\prime}(t_{i})\geq\psi^{\prime}(t_{k})>0,
\]
which yields 
\[
z_{i}\sim\frac{\psi(t_{i})}{\sqrt{n\psi^{\prime}(t_{i})}}\leq\frac{2\psi (t_{k})}{\sqrt{n\psi^{\prime}(t_{k})}},
\]
hence we have 
\[
z_{i}^{2}\leq\frac{4\psi(t_{k})^{2}}{{n\psi^{\prime}(t_{k})}}=\frac{4\psi(t_{k})^{2}}{{\sqrt{n}\psi^{\prime}(t_{k})}}\frac{1}{\sqrt{n}}=o\left(\frac{1}{\sqrt{n}}\right),
\]
where the last step holds from condition (\ref{croissance de aen o(1)}). Further it holds $z_{i}\rightarrow 0$.

This closes the proof of the Lemma.

\subsection{Proof of Lemma \protect\ref{Lemme s^2 self neglecting}}

\textbf{Case 1:} if $h(t)\in R_{\beta }$. By Theorem \ref{THM3.1}, it holds $s^{2}\sim\psi^{\prime}(t)$ with $\psi(t)\sim t^{1/\beta}l_{1}(t)$, where $l$ is some slowly varying function. Consider $\psi^{\prime}(t)=1/h^{^{\prime}}\big(\psi(t)\big)$, hence  
\begin{align*}
\frac{1}{s^{2}}& \sim h^{^{\prime}}\big(\psi(t)\big)=\psi(t)^{\beta-1}l_{0}\big(\psi(t)\big)\big(\beta+\epsilon\big(\psi(t)\big)\big) \\
& \sim\beta t^{1-1/\beta}l_{1}(t)^{\beta-1}l_{0}\big(\psi(t)\big)=o(t),
\end{align*}
where $l_{0}\in R_{0}$. This implies for any $u\in K$ 
\[
\frac{u}{s}=o(\sqrt{t}),
\]%
which  yields, using (\ref{2.3}) 
\begin{align*}
\frac{s^{2}\left(t+u/s\right)}{s^{2}}& \sim\frac{\psi^{\prime}(t+u/s)}{\psi^{\prime}(t)}=\frac{\psi(t)^{\beta-1}l_{0}\big(\psi(t)\big)\big(\beta+\epsilon\big(\psi(t)\big)\big)}{\big(\psi(t+u/s)\big)^{\beta-1}l_{0}\big(\psi(t+u/s)\big)\big(\beta+\epsilon\big(\psi(t+u/s)\big)\big)} \\
& \sim\frac{\psi(t)^{\beta-1}}{\psi(t+u/s)^{\beta-1}}\sim\frac{t^{1-1/\beta}l_{1}(t)^{\beta-1}}{(t+u/s)^{1-1/\beta}l_{1}(t+u/s)^{\beta-1}}\longrightarrow 1.
\end{align*}
\textbf{Case 2:} if $h(t)\in R_{\infty}$. Then $\psi(t)\in\widetilde{R_{0}}$, hence it holds 
\[
\frac{1}{st}\sim\frac{1}{t\sqrt{\psi^{\prime}(t)}}=\sqrt{\frac{1}{t\psi(t)\epsilon(t)}}\longrightarrow 0,
\]
which last step holds from condition (\ref{2.4}). Hence for any $u\in K$, we get as $n\rightarrow\infty$ 
\[
\frac{u}{s}=o(t),
\]
thus using the slowly varying propriety of $\psi(t)$ we have 
\begin{align}
\frac{s^{2}\left(t+u/s\right)}{s^{2}}& \sim\frac{\psi^{\prime}(t+u/s)}{\psi^{\prime}(t)}=\frac{\psi(t+u/s)\epsilon(t+u/s)}{t+u/s}\frac{t}{\psi(t)\epsilon(t)}  \notag \\
& \sim\frac{\epsilon(t+u/s)}{\epsilon(t)}=\frac{\epsilon(t)+O\big(\epsilon^{\prime}(t)u/s\big)}{\epsilon(t)}\longrightarrow 1,
\label{3fqdq}
\end{align}
where we used a Taylor expansion in the second line, and where the last step holds from condition (\ref{2.4}). This completes the proof.

\subsection{Proof of Theorem \protect\ref{point conditional density e}}

Making use of 
\[
p(X_{1}^{k}=y_{1}^{k}|S_{1}^{n}=na_{n})=\prod_{i=0}^{k-1}p(X_{i+1}=y_{i+1}|S_{i+1}^{n}=na_{n}-s_{1}^{i}),
\]
and using the tilted density $\pi^{a_{n}}$ instead of $\pi^{m_{i}}$ it holds 
\begin{equation}
p(X_{i+1}=y_{i+1}|S_{i+1}^{n}=na_{n}-s_{1}^{i})=\frac{\sqrt{n-i}}{\sqrt{n-i-1}}\pi^{a_{n}}(X_{i+1}=y_{i+1})\frac{\widetilde{\pi_{n-i-1}}(\frac{(i+1)a_{n}-s_{1}^{i+1}}{s\sqrt{n-i-1}})}{\widetilde{\pi _{n-i}}\left(\frac{ia_{n}-s_{1}^{i}}{s\sqrt{n-i}}\right)},  \label{bayes formula e}
\end{equation}
where $\widetilde{\pi_{n-i-1}}$ is the normalized density of $S_{i+2}^{n}$ under i.i.d. sampling with $\pi^{a_{n}}$. Correspondingly, denote $\widetilde{\pi_{n-i}}$ the normalized density of $S_{i+1}^{n}$ under the same sampling. Write 
\[
z_{i}=\frac{ia_{n}-s_{1}^{i-1}}{s\sqrt{n-i+1}}.
\]%
By Theorem \ref{3theorem1} a third-order Edgeworth expansion yields 
\[
\widetilde{\pi_{n-i-1}}(z_{i})=\phi(z_{i})\left(1+R_{n}^{i}\right)+o\left(\frac{1}{\sqrt{n}}\right),
\]
where 
\[
R_{n}^{i}=\frac{\mu_{3}}{6s^{3}\sqrt{n-i-1}}(z_{i}^{3}-3z_{i}).
\]%
Accordingly 
\[
\widetilde{\pi_{n-i}}(z_{i-1})=\phi(z_{i-1})\left(1+R_{n}^{i-1}\right)+o\left(\frac{1}{\sqrt{n}}\right).
\]
When $a_{n}\rightarrow\infty$, using Theorem \ref{THM3.1}, it holds 
\begin{align}
\sup_{0\leq i\leq k-1}z_{i}^{2}\sim\frac{(i+1)^{2}a_{n}^{2}}{s^{2}{n}}& \leq\frac{2k^{2}a_{n}^{2}}{s^{2}{n}}=\frac{2k^{2}(m(t))^{2}}{s^{2}{n}} \notag \\
& \sim\frac{2k^{2}(\psi(t))^{2}}{\psi^{\prime}(t){n}}=\frac{2k^{2}(\psi(t))^{2}}{\sqrt{n}\psi^{\prime}(t)}\frac{1}{\sqrt{n}}=o\left( \frac{1}{\sqrt{n}}\right),  \label{ordre de z_i}
\end{align}
where the last step holds under condition (\ref{croissance de aen o(1)}).
Hence it holds $z_{i}\rightarrow 0$ for $0\leq i\leq k-1$ as $a_{n}\rightarrow\infty$, and by Corollary \ref{3cor1}, $\mu_{3}/s^{3}\rightarrow 0$; Hence 
\[
R_{n}^{i}=o\left(1/\sqrt{n}\right)\text{ and }R_{n}^{i-1}=o\left(1/\sqrt{n}\right).
\]
We thus get 
\begin{align*}
& p(X_{i+1}=y_{i+1}|S_{i+1}^{n}=na_{n}-s_{1}^{i})=\frac{\sqrt{n-i}}{\sqrt{n-i-1}}\pi^{a_{n}}(X_{i+1}=y_{i+1})\frac{\phi(z_{i})}{\phi(z_{i-1})}\left(1+o(1/\sqrt{n})\right) \\
& =\frac{\sqrt{n-i}}{\sqrt{n-i-1}}\pi^{a_{n}}(X_{i+1}=y_{i+1})\left( {1-(z_{i}^{2}-z_{i-1}^{2})/2+o(z_{i}^{2}-z_{i-1}^{2})}\right)\left( 1+o(1/\sqrt{n})\right),
\end{align*}
where we used a Taylor expansion in the second equality. Using (\ref{ordre de z_i}), we have as $a_{n}\rightarrow\infty$ 
\[
|z_{i}^{2}-z_{i-1}^{2}|=o(1/\sqrt{n}),
\]
from which 
\[
p(X_{i+1}=y_{i+1}|S_{i+1}^{n}=na_{n}-s_{1}^{i})=\frac{\sqrt{n-i}}{\sqrt{n-i-1}}\pi ^{a_{n}}(X_{i+1}=y_{i+1})\left(1+o(1/\sqrt{n})\right),
\]
which yields
\begin{align*}
p(X_{1}^{k}=y_{1}^{k}|S_{1}^{n}=na_{n})& =\prod_{i=0}^{k-1}\left( \pi
^{a_{n}}(X_{i+1}=y_{i+1})\sqrt{\frac{n}{n-k}}\right)\prod_{i=0}^{k-1}\left(1+o\left(\frac{1}{\sqrt{n}}\right)\right) \\
& =\left(1+o\left(\frac{1}{\sqrt{n}}\right)\right)\prod_{i=0}^{k-1}\pi^{a_{n}}(X_{i+1}=y_{i+1}).
\end{align*}
This completes the proof.

\subsection{Proof of Lemma \protect\ref{JensenLemme}}

For a density $p(x)$ defined in as in (\ref{2.1}), we show that $g(x)$ is a convex function when $x$ is large. If $h(x)\in R_{\beta}$, for $x$ large 
\[
g^{^{\prime\prime}}(x)=h^{^{\prime}}(x)=\frac{h(x)}{x}\left(\beta
+\epsilon(x)\right)>0.
\]
If $h(x)\in R_{\infty}$, its reciprocal function $\psi(x)\in \widetilde{R_{0}}$. Set $x:=\psi(v)$. Then 
\[
g^{^{\prime\prime}}(x)=h^{\prime}(x)=\frac{1}{\psi^{\prime }(v)}=\frac{v}{\psi(v)\epsilon(v)}>0,
\]
where the inequality holds since $\epsilon(v)>0$ when $v$ is large enough. Hence $g(x)$ is convex for large $x$. Therefore, the density $p(x)$ with  $h(x)\in \mathcal{R}$ satisfies the conditions of Theorem 6.2.1 in \cite{Jensen}. Denote by $p_{n}$ the density of $\bar{X}=(X_{1}+\ldots+X_{n})/n$. We obtain from formula $(2.2.6)$ of \cite{Jensen}, using a third order Edgeworth expansion 
\[
P(S_{1}^{n}\geq na_{n})=\frac{\Phi(t)^{n}\exp(-nta_{n})}{\sqrt{n}ts(t)}\left(B_{0}(\lambda_{n})\right)+O\left(\frac{\mu_{3}(t)}{6\sqrt{n}s^{3}(t)}B_{3}(\lambda_{n})\right),
\]
where $\lambda_{n}=\sqrt{n}ts(t)$, $B_{0}(\lambda_{n})$ and $B_{3}(\lambda_{n})$ are defined by 
\[
B_{0}(\lambda_{n})=\frac{1}{\sqrt{2\pi}}\left(1-\frac{1}{\lambda_{n}^{2}}+o(\frac{1}{\lambda_{n}^{2}})\right),\qquad B_{3}(\lambda_{n})\sim-\frac{3}{\sqrt{2\pi}\lambda_{n}}.
\]
We show that as $a_{n}\rightarrow\infty$ 
\begin{equation}
\frac{1}{\lambda_{n}^{2}}=o\left(\frac{1}{n}\right).  \label{F1}
\end{equation}
Since $n/\lambda_{n}^{2}=1/(t^{2}s^{2}(t))$, (\ref{F1}) is equivalent to show that 
\[
t^{2}s^{2}(t)\longrightarrow\infty.
\]%
By Theorem \ref{THM3.1}, $m(t)\sim\psi(t)$ and $s^{2}(t)\sim\psi^{\prime}(t)$; combined with $m(t)=a_{n}$, it holds $t\sim h(a_{n})l_{1}(a_{n})$, where $l_{1}$ is some slowly varying function. If $h\in R_{\beta}$, notice that 
\[
\psi^{\prime}(t)=\frac{1}{h^{\prime}(\psi(t))}=\frac{\psi(t)}{h\left(
\psi(t)\right)\left(\beta+\epsilon(\psi(t))\right)}\sim\frac{a_{n}}{h(a_{n})\left(\beta +\epsilon(\psi(t))\right)};
\]
hence 
\[
t^{2}s^{2}(t)\sim h(a_{n})^{2}l_{1}(a_{n})^{2}\frac{a_{n}}{h(a_{n})\left(\beta+\epsilon(\psi(t))\right)}=\frac{a_{n}h(a_{n})l_{1}(a_{n})^{2}}{\beta+\epsilon(\psi(t_{n}))}\longrightarrow\infty.
\]
If $h\in R_{\infty}$, then $\psi(t)\in\widetilde{R_{0}}$, thus 
\[
t^{2}s^{2}(t)\sim t^{2}\frac{\psi(t)\epsilon(t)}{t}=t\psi(t)\epsilon
(t)\longrightarrow\infty ,
\]
Summing up we have proved that  
\[
B_{0}(\lambda_{n})=\frac{1}{\sqrt{2\pi}}\left(1+o\left(\frac{1}{n}\right)\right).
\]
By (\ref{F1}), $\lambda_{n}$ goes to $\infty$ as $a_{n}\rightarrow \infty $; this implies further that $B_{3}(\lambda_{n})\rightarrow 0$. On the other hand, by Corollary \ref{3cor1} it holds $\mu_{3}/s^{3}\rightarrow 0$. Hence we obtain  
\[
P(S_{1}^{n}\geq na_{n})=\frac{\Phi(t)^{n}\exp(-nta_{n})}{\sqrt{2\pi n}ts(t)}\left(1+o\left(\frac{1}{\sqrt{n}}\right)\right),
\]
which gives (\ref{Hoglund}). By Jensen's Theorem 6.2.1 (\cite{Jensen}) and formula (2.2.4) in \cite{Jensen} it follows uniformly in $\tau$ 
\[
p(S_{1}^{n}/n=\tau)=\frac{\sqrt{n}\Phi(t_{\tau})^{n}\exp(-nt_{\tau}\tau)}{\sqrt{2\pi }s(t_{\tau})}\left(1+o\left(\frac{1}{\sqrt{n}}\right)\right),
\]
which, together with $p(S_{1}^{n}=n\tau)=(1/n)p(S_{1}^{n}/n=\tau)$, gives (\ref{Hoglund density}).

\subsection{Proof of Theorem \ref{main theorem}}
It holds 
\begin{align}
p_{A_{n}}(y_{1})& =\int_{a_{n}}^{\infty}p(X_{1}=y_{1}|S_{1}^{n}=n\tau)p(S_{1}^{n}=n\tau|S_{1}^{n}\geq na_{n})d\tau   \notag \\
& =\frac{p(X_{1}=y_{1})}{P(S_{1}^{n}\geq na_{n})}\int_{a_{n}}^{\infty}p(S_{2}^{n}=n\tau-y_{1})d\tau   \notag \\
& =\left(1+\frac{P_{2}}{P_{1}}\right)\frac{p(X_{1}=y_{1})}{P(S_{1}^{n}\geq na_{n})}\int_{a_{n}}^{a_{n}+\eta_{n}}p(S_{2}^{n}=n\tau-y_{1})d\tau  \notag\\
& =\left(1+\frac{P_{2}}{P_{1}}\right)\int_{a_{n}}^{a_{n}+\eta_{n}}p(X_{1}=y_{1}|S_{1}^{n}=n\tau)p(S_{1}^{n}=n\tau|S_{1}^{n}\geq na_{n})d\tau   \label{forme eceedance}
\end{align}
where the second equality is obtained by Bayes formula, and 
\[
P_{1}=\int_{a_{n}}^{a_{n}+\eta_{n}}p(S_{2}^{n}=n\tau-y_{1})d\tau ,
\]
\[
P_{2}=\int_{a_{n}+\eta_{n}}^{\infty}p(S_{2}^{n}=n\tau-y_{1})d\tau.
\]
We show that $P_{2}$ is infinitely small with respect to $P_{1}$. Indeed 
\begin{align*}
& P_{2}=\frac{1}{n}P\left(S_{2}^{n}\geq {n(a_{n}+\eta_{n})-y_{1}}\right)=\frac{1}{n}P\left(S_{2}^{n}\geq (n-1)c_{n}\right), \\
& P_{1}+P_{2}=\frac{1}{n}P\left(S_{2}^{n}\geq {na_{n}-y_{1}}\right) =\frac{1}{n}P\left(S_{2}^{n}\geq(n-1)d_{n}\right),
\end{align*}
where $c_{n}=\left(n(a_{n}+\eta_{n})-y_{1}\right)/(n-1)$ and $d_{n}=(na_{n}-y_{1})/(n-1)$. Denote $t_{c_{n}}=m^{-1}(c_{n})$ and $t_{d_{n}}=m^{-1}(d_{n})$. Using Lemma \ref{JensenLemme}, it holds 
\[
\frac{P_{2}}{P_{1}+P_{2}}=\left(+o\left(\frac{1}{\sqrt{n}}\right)\right)\frac{t_{d_{n}}s(t_{d_{n}})}{t_{c_{n}}s(t_{c_{n}})}\exp\left(-(n-1)\left(I(c_{n})-I(d_{n})\right)\right).
\]
Using the convexity of the function $I$, it holds 
\begin{align*}
\exp\left(-(n-1)I(c_{n})-I(d_{n})\right)& \leq\exp-(n-1)(c_{n}-d_{n})m^{-1}(d_{n})\\
& =\exp -n\eta_{n}m^{-1}(d_{n}).
\end{align*}
The function $u\rightarrow m^{-1}(u)$ is increasing. Since $d_{n}\geq a_{n}$ as $a_{n}\rightarrow\infty$, it holds $m^{-1}(d_{n})\geq m^{-1}(a_{n})$; hence $\exp-(n-1)\left(I(c_{n})-I(d_{n})\right)\leq\exp-n\eta_{n}m^{-1}(a_{n})\longrightarrow 0$. We now show that 
\[
\frac{t_{d_{n}}s(t_{d_{n}})}{t_{c_{n}}s(t_{c_{n}})}\longrightarrow 1.
\]
By definition, $c_{n}/d_{n}\rightarrow 1$ as $a_{n}\rightarrow\infty$. If $h\in R_{\beta}$, it holds 
\[
\left(\frac{t_{d_{n}}s(t_{d_{n}})}{t_{c_{n}}s(t_{c_{n}})}\right)^{2}\sim\left(\frac{d_{n}h(d_{n})}{\beta+\epsilon\left(\psi(d_{n})\right)}\right)^{2}\left(\frac{\beta+\epsilon\left(\psi(c_{n})\right)}{c_{n}h(c_{n})}\right)^{2}\sim\left(\frac{h(d_{n})}{h(c_{n})}\right)^{2}\longrightarrow 1.
\]
If $h\in R_{\infty}$, 
\[
t^{2}s^{2}(t)\sim t\psi(t)\epsilon(t),
\]
hence 
\[
\left(\frac{t_{d_{n}}s(t_{d_{n}})}{t_{c_{n}}s(t_{c_{n}})}\right)^{2}\sim\frac{d_{n}\psi(d_{n})\epsilon(d_{n})}{c_{n}\psi(c_{n})\epsilon(c_{n})}\sim\frac{\epsilon(d_{n})}{\epsilon(c_{n})}=\frac{\epsilon\left(c_{n}-n\eta_{n}/(n-1)\right)}{\epsilon(c_{n})}\longrightarrow 1,
\]
where last step holds by using the same argument as in the second line of (\ref{3fqdq}). We obtain 
\[
\frac{P_{2}}{P_{1}}=o\left(1\right).
\]
Therefore $p_{A_{n}}(y_{1})$ can be approximated by 
\[
p_{A_{n}}(y_{1})=\left(1+o\left(1\right)\right)\int_{a_{n}}^{a_{n}+\eta_{n}}p(X_{1}=y_{1}|S_{1}^{n}=n\tau)p(S_{1}^{n}=n\tau|S_{1}^{n}\geq na_{n})d\tau.
\]
By Lemma \ref{JensenLemme}, it follows that uniformly when $\tau\in\lbrack a_{n},a_{n}+\eta_{n}]$ 
\begin{align}
& p(S_{1}^{n}=n\tau|S_{1}^{n}\geq na_{n})=\frac{p(S_{1}^{n}=n\tau)}{P(S_{1}^{n}\geq na_{n})}  \notag \\
& =\left(1+o\left(\frac{1}{\sqrt{n}}\right)\right)\frac{ts(t)}{s(t_{\tau})}\exp\left(-n(I(\tau)-I(a_{n}))\right),  \label{finpreuve}
\end{align}
We now turn back to (\ref{forme eceedance}) and note that under the appropriate condition (\ref{croissance de aen o(1)}) or (\ref{croissance de aen O(1)})\ the corresponding approximating density $\pi^{\tau }$ or $g_{\tau}$ can be seen to hold uniformly on $\tau $ in $(a_{n},a_{n}+\eta_{n})$. Inserting (\ref{finpreuve}) into (\ref{forme eceedance}), we complete the proof of Theorem \ref{main theorem} insering the corresponding local result.

\end{document}